\setlist[enumerate]{labelsep=*, leftmargin=1.5pc}
\setlist[enumerate]{label=\normalfont(\roman*), ref=\roman*}
\theoremstyle{plain}
\newtheorem{thm}{Theorem}[section]
\newtheorem{pro}[thm]{Proposition}
\newtheorem{lem}[thm]{Lemma}
\theoremstyle{definition}
\newtheorem{dfn}[thm]{Definition}
\newtheorem{rem}[thm]{Remark}
\newtheorem{eg}[thm]{Example}
\newtheorem{algorithm}[thm]{Algorithm}
\DeclareMathOperator{\coeff}{coeff}
\DeclareMathOperator{\Hom}{Hom}
\DeclareMathOperator{\Pic}{Pic}
\DeclareMathOperator{\Div}{Div}
\DeclareMathOperator{\Nef}{\overline{Amp}}
\DeclareMathOperator{\Newt}{Newt}
\DeclareMathOperator{\cone}{cone}
\newcommand{\cA}{\mathcal{A}}
\newcommand{\cO}{\mathcal{O}}
\newcommand{\QQ}{{\mathbb{Q}}}
\newcommand{\PP}{{\mathbb{P}}}
\newcommand{\ZZ}{{\mathbb{Z}}}
\newcommand{\FF}{\mathbb{F}}
\newcommand{\CC}{\mathbb{C}}
\newcommand{\LL}{\mathbb{L}}
\newcommand{\cM}{\mathbb{M}}
\newcommand{\tM}{\widetilde{M}}
\newcommand{\tN}{\widetilde{N}}
\newcommand{\Cstar}{\CC^\times}
\renewcommand{\emptyset}{\varnothing}
\renewcommand{\tilde}{\widetilde}
\newcommand{\conv}[1]{\operatorname{conv}\mleft({#1}\mright)}
\newcommand{\Scan}{S_{\text{\rm can}}}
\newcommand{\V}[1]{\operatorname{verts}\mleft({#1}\mright)}
\begin{document}
\author[T.\,Coates]{Tom Coates}
\address{Department of Mathematics\\Imperial College London\\London, SW$7$\ $2$AZ\\UK}
\email{t.coates@imperial.ac.uk}
\author[A.\,M.\,Kasprzyk]{Alexander Kasprzyk}
\address{School of Mathematical Sciences\\University of Nottingham\\Nottingham, NG$7$\ $2$RD\\UK}
\email{a.m.kasprzyk@nottingham.ac.uk}
\author[T.\,Prince]{Thomas Prince}
\address{Department of Mathematics\\Imperial College London\\London, SW$7$\ $2$AZ\\UK}
\email{t.prince12@imperial.ac.uk}
\title{Laurent Inversion}
\maketitle
\begin{abstract}
  We describe a practical and effective method for reconstructing the deformation class of a Fano manifold~$X$ from a Laurent polynomial~$f$ that corresponds to $X$ under Mirror Symmetry. We explore connections to nef partitions, the smoothing of singular toric varieties, and the construction of embeddings of one (possibly-singular) toric variety in another. In particular, we construct degenerations from Fano manifolds to singular toric varieties; in the toric complete intersection case, these degenerations were constructed previously by Doran--Harder.
 We use our method to find models of orbifold del~Pezzo surfaces as complete intersections and degeneracy loci, and to construct a new four-dimensional Fano manifold.
\end{abstract}
\section{Introduction}
The classification of Fano manifolds is an important open problem in geometry. As things stand the classification is understood only in dimensions one, two, and three~\cite{Iskovskih:1,Iskovskih:2,Iskovskih:anticanonical,Mori--Mukai:Manuscripta,Mori--Mukai:Tokyo,Mori--Mukai:Kinosaki,Mori--Mukai:Erratum,Mori--Mukai:Turin}, but Golyshev et al.~ have announced a new approach to Fano classification~\cite{Golyshev, CCGGK}, using Mirror Symmetry, that could potentially work in all dimensions. Extensive computational experiments suggest that, under Mirror Symmetry, $n$-dimensional Fano manifolds correspond to certain Laurent polynomials in $n$ variables with very special properties. We understand how to recover the known classifications in low dimensions from this perspective~\cite{A+,ACGK,CCGK}, but two essential questions remain: 
\begin{enumerate}[label=(\Alph*),ref=\Alph*] 
\item what is the class of Laurent polynomials $f$ that correspond, under Mirror Symmetry, to Fano manifolds $X$?\label{problem:A} 
\item given such a Laurent polynomial $f$, how can we construct the corresponding $X$?\label{problem:B} 
\end{enumerate}

There has been significant recent progress on Question~\ref{problem:A}: deformation families of Fano manifolds conjecturally correspond to mutation-equivalence classes of certain \emph{rigid maximally mutable Laurent polynomials}~\cite{A+,Kasprzyk--Tveiten}. In this paper we make significant progress on Question~\ref{problem:B}. There are well-understood methods, going back to Givental and Hori--Vafa, that to a Fano toric complete intersection~$X$ associate a Laurent polynomial~$f$ that corresponds to $X$ under Mirror Symmetry. We describe a technique, \emph{Laurent inversion}, for inverting this process, constructing the toric complete intersection~$X$ directly from its Laurent polynomial mirror~$f$. In many cases this allows, given a Laurent polynomial $f$, the direct construction of a Fano manifold $X$ that corresponds to $f$ under Mirror Symmetry. Thus, in many cases, Laurent inversion answers Question~\ref{problem:B}. In fact, as we explain in~\S\ref{sec:beyond_ci}, when phrased appropriately, Laurent inversion is not limited to toric complete intersections: we can use it to construct Fano manifolds $X$ as degeneracy loci (cut out by Pfaffian-type equations), and to give other classical constructions. As proof of concept, in~\S\ref{sec:new_4d} we construct a new four-dimensional Fano manifold by applying Laurent inversion to a rigid maximally-mutable Laurent polynomial in four variables. 

The idea of reconstructing a Fano manifold $X$ from its mirror $f$ is not new. It is expected that, if a Fano manifold $X$ is mirror to a Laurent polynomial~$f$, then there is a degeneration from $X$ to the (singular) toric variety $X_f$ defined by the spanning fan of the Newton polytope of $f$; such a degeneration has been constructed for complete intersections in partial flag manifolds by Doran--Harder~\cite{Doran--Harder}. Thus one might hope to recover the Fano manifold $X$ from~$f$ by smoothing $X_f$, for instance using the Gross--Siebert program\footnote{This works in dimension two~\cite{Prince}, but the higher-dimensional case is  significantly more involved.}~\cite{Gross--Siebert}, or via deformation theory~\cite{A95,A97,CI14,CI16,I12}. Our new contribution here is to give an explicit construction of $X$, rather than a proof of its existence. Indeed, regardless of its context, Laurent inversion gives a powerful new method for constructing algebraic varieties. We illustrate this in~\S\ref{sec:new_third_one_one} below, where we exhibit explicit models for del~Pezzo surfaces with $1/3(1,1)$ singularities that played an essential role in the Corti--Heuberger classification~\cite{CH}, and which are hard to construct using more traditional methods.

As we will see in~\S\ref{sec:laurent_inversion}, in many cases Laurent inversion constructs, along with $X$, an embedded degeneration from $X$ to the singular toric variety $X_f$ -- thus implementing the expected smoothing of $X_f$ discussed above. We hope therefore that Laurent inversion will give a substantial hint as to the generalisations required to get a Gross--Siebert-style smoothing procedure working in higher dimensions. In the toric complete intersection case, such an embedded degeneration has been constructed by Doran--Harder~\cite{Doran--Harder}; we build an explicit link to their work in~\S\ref{sec:torus_charts}, where we describe how our main combinatorial construction, \emph{scaffolding}, can be seen as a generalisation of their notion of amenable collection. We also discuss (in~\S\ref{sec:nef_partitions}) how scaffolding gives a generalisation to the Fano case of Borisov's celebrated \emph{nef partitions}, which have proved a powerful tool for constructing mirror partners to Calabi--Yau toric complete intersections~\cite{B93,BB96}. It will be very interesting to see how much of the theory survives to the Calabi--Yau case, and whether we can use Laurent inversion to construct and investigate Calabi--Yau manifolds that are not complete intersections.

\section{Laurent Polynomial Mirrors for Toric Complete Intersections}
\label{sec:forward}

We begin by recalling how to associate to a toric complete intersection $X$ a Laurent polynomial that corresponds to $X$ under Mirror Symmetry. This question has been considered by many authors~\cite{Givental:toric,Hori--Vafa,Prz:1,Prz:2,Doran--Harder,CKP}, and we will give a construction which generalises and unifies all these perspectives below (in~\S\ref{sec:torus_charts}). Consider first the ambient toric variety or toric stack $Y$. We consider the case where:
\begin{equation}
\label{eq:ambient_conditions}
  \begin{minipage}{0.94\linewidth}
    \begin{enumerate}
    \item $Y$ is a proper toric Deligne--Mumford stack;  
    \item the coarse moduli space of $Y$ is projective;  
    \item the generic isotropy group of $Y$ is trivial, that is, $Y$ is a toric \emph{orbifold}; and  
    \item at least one torus-fixed point in $Y$ is smooth. 
    \end{enumerate}
  \end{minipage}
\end{equation}
Conditions~(i)--(iii) here are essential; condition~(iv) is less important and will be removed in~\S\ref{sec:torus_charts}.
In the original work by Borisov--Chen--Smith~\cite{Borisov--Chen--Smith}, toric Deligne--Mumford stacks are defined in terms of stacky fans. In our context, since the generic isotropy is trivial, giving a stacky fan that defines $Y$ amounts to giving a triple $(N;\Sigma;\rho_1,\ldots,\rho_R)$ where $N$ is a lattice, $\Sigma$ is a rational simplicial fan in $N \otimes \QQ$, and $\rho_1,\ldots,\rho_R$ are elements of $N$ that generate the rays of $\Sigma$. It will be more convenient for our purposes, however, to represent $Y$ as a GIT quotient $\big[ \CC^R/\!\!/_{\!\omega} (\Cstar)^r\big]$. Any such $Y$ can be realised this way, as we now explain.

\begin{dfn}\label{dfn:GIT_data}
  We say that $(K;\LL;D_1,\ldots,D_R;\omega)$ are \emph{GIT data} if $K \cong (\Cstar)^r$ is a connected torus of rank~$r$; $\LL = \Hom(\Cstar,K)$ is the lattice of subgroups of $K$; $D_1,\ldots,D_R \in \LL^*$ are characters of $K$ that span a strictly convex full-dimensional cone in $\LL^* \otimes \QQ$, and $\omega \in \LL^* \otimes \QQ$ lies in this cone.
\end{dfn}

GIT data $(K;\LL;D_1,\ldots,D_R;\omega)$ determine a quotient stack $\big[ V_\omega/K \big]$ with $V_\omega \subset \CC^R$, as follows. The characters $D_1,\ldots,D_R$ define an action of $K$ on $\CC^R$. For convenience write $[R] := \{1,2,\ldots,R\}$. We say that a subset $I \subset [R]$ \emph{covers} $\omega$ if and only if $\omega = \sum_{i \in I} a_i D_i$ for some strictly positive rational numbers $a_i$. Set $\cA_\omega = \{ I \subset [R]\mid \text{$I$ covers $\omega$}\}$, and set
\begin{align*}
  V_\omega = \bigcup_{I \in \cA_\omega} (\Cstar)^I \times \CC^{\bar{I}} && \text{where} && (\Cstar)^I \times \CC^{\bar{I}} = \big\{(x_1,\ldots,x_R) \in \CC^R\mid \text{$x_i \ne 0$ if $i \in I$}\big\}. 
\end{align*}
The subset $V_\omega \subset \CC^R$ is $K$-invariant, and $\big[ V_\omega/K\big]$ is the GIT quotient stack given by the action of $K$ on $\CC^R$ and the stability condition $\omega$. The  convexity hypothesis in Definition~\ref{dfn:GIT_data} ensures that $\big[ V_\omega/K\big]$ is proper.

\begin{rem}
  Recall that the quotient $\big[ V_\omega/K\big]$ depends on $\omega$ only via the minimal cone $\sigma$ of the secondary fan such that $\omega \in \sigma$. The \emph{secondary fan} for $(K;\LL;D_1,\ldots,D_R;\omega)$ is the fan defined by the wall-and-chamber decomposition of the cone in $\LL^* \otimes \QQ$ spanned by $D_1,\ldots,D_R$, where the walls are given by all $(r-1)$-dimensional cones of the form $\{D_i\mid i \in I\}$ with $I \subset [R]$.
\end{rem}

\begin{dfn}
  \emph{Orbifold GIT data} are those such that the quotient $\big[ V_\omega/K\big]$ is a toric orbifold.
\end{dfn}

The quotient $\big[ V_\omega/K\big]$ is a toric Deligne--Mumford stack if and only if $\omega$ lies in the strict interior of a maximal cone in the secondary fan. A toric orbifold $Y$ satisfying conditions~\eqref{eq:ambient_conditions} above arises as the quotient $\big[ V_\omega/K\big]$ for GIT data $(K;\LL;D_1,\ldots,D_R;\omega)$ as follows. Suppose that $Y$ is defined by the stacky fan data $(N;\Sigma;\rho_1,\ldots,\rho_R)$. There is an exact sequence
\begin{equation}
\label{eq:fan_sequence}
  \xymatrix{
    0 \ar[r] & \LL \ar[r] & \ZZ^R \ar[r]^\rho  & N \ar[r] & 0
  }
\end{equation}
where $\rho$ maps the $i$th element of the standard basis for $\ZZ^R$ to $\rho_i$; this defines $\LL$ and $K = \LL \otimes \Cstar$. Dualising gives 
\begin{equation}
\label{eq:divisor_sequence}
  \xymatrix{
    0 & \LL^* \ar[l] & (\ZZ^*)^R \ar[l]_D  & M \ar[l] & 0 \ar[l]
  }
\end{equation}
where $M := \Hom(N,\ZZ)$, and we set $D_i \in \LL^*$ to be the image under $D$ of the $i$th standard basis element for $(\ZZ^*)^R$. The stability condition $\omega$ is taken to lie in the strict interior of 
\[
C := \!\!\bigcap_{\text{maximal cones $\sigma$ of $\Sigma$}}\!\!C_\sigma
\]
where $C_\sigma$ is the cone in $\LL^* \otimes \QQ$ spanned by $\{D_i\mid i \not \in \sigma\}$; projectivity of the coarse moduli space of $Y$ implies that $C$ is a maximal cone of the secondary fan, and in particular that $C$ has non-empty interior.

We can reverse this construction, defining a stacky fan $(N;\Sigma;\rho_1,\ldots,\rho_n)$ from GIT data $(K;\LL;D_1,\ldots,D_R;\omega)$ such that $D_1,\ldots,D_R$ span $\LL^*$. The lattice $\LL$ and elements $D_1,\ldots,D_R \in \LL^*$ define the exact sequence~\eqref{eq:divisor_sequence}, and dualising gives~\eqref{eq:fan_sequence}. This defines the lattice $N$ and $\rho_1,\ldots,\rho_R$. The fan $\Sigma$ consists of the cones spanned by $\{\rho_i\mid i \in I\}$ where $I \subset [R]$ satisfies $[R]\setminus I \in \cA_\omega$.

\begin{rem}
  Once $K$,~$\LL$, and~$D_1,\ldots,D_R$ have been fixed, choosing $\omega$ such that the GIT data $(K;\LL;D_1,\ldots,D_R;\omega)$ define a toric Deligne--Mumford stack amounts to choosing a maximal cone in the secondary fan. 
\end{rem}

\begin{rem}
  A character $\chi \in \LL^*$ determines a line bundle on $Y$, which we denote also by $\chi$.
\end{rem}

\begin{dfn}\label{dfn:convex_partition_with_basis}
  Let $\Theta = (K;\LL;D_1,\ldots,D_R;\omega)$ be orbifold GIT data, and let $Y$ denote the corresponding toric orbifold. A \emph{convex partition with basis} for $\Theta$ is a partition $B,S_1,\ldots,S_k,U$ of $[R]$
  such that:
  \begin{enumerate}
  \item $\{D_b\mid b \in B\}$ is a basis for $\LL^*$; 
  \item $\omega$ is a non-negative linear combination of $\{D_b\mid b \in B\}$;
  \item each $S_i$ is non-empty; 
  \item for each $i \in [k]$, the line bundle $L_i := \sum_{j \in S_i} D_j$ on $Y$ is convex\footnote{A line bundle $L$ on a Deligne--Mumford stack $Y$ is convex if and only if $L$ is nef and is the pullback of a line bundle on the coarse moduli space $|Y|$ of $Y$ along the structure map $Y \to |Y|$. See~\cite{CGIJJM}.}; and 
  \item for each $i \in [k]$, $L_i$ is a non-negative linear combination of $\{D_b\mid b \in B\}$. 
  \end{enumerate}
  We allow $k=0$, and we allow $U=\emptyset$. 
\end{dfn}

\begin{rem}
  Since $\omega$ here is taken to lie in the strict interior of a maximal cone in the secondary fan, it is given by a positive linear combination of $\{D_b\mid b \in B\}$. This positivity guarantees that the maximal cone spanned by $\{\rho_i\mid i \in [R]\setminus B\}$ defines a smooth torus-fixed point in $Y$.
\end{rem}

\begin{rem}
  It would be more natural to replace the condition that $L_i$ be convex here with the weaker condition that $L_i$ be nef. But, since we currently lack a Mirror Theorem that applies to toric complete intersections beyond the convex case, we will require convexity. If the ambient space $Y$ is a manifold, rather than an orbifold, then a line bundle on $Y$ is convex if and only if it is nef.
\end{rem}

Given:
\begin{equation}
\label{eq:lots_of_data}
  \begin{minipage}{0.94\linewidth}
    \begin{enumerate}
    \item orbifold GIT data $\Theta = (K;\LL;D_1,\ldots,D_R;\omega)$;
    \item a convex partition with basis $B, S_1, \ldots, S_k, U$ for $\Theta$; and
    \item a choice of elements $s_i \in S_i$ for each $i \in [k]$;
    \end{enumerate}
  \end{minipage}
\end{equation}
we define a Laurent polynomial $f$ as follows. This is the \emph{Przyjalkowski method}; cf.~\cite[\S5]{CKP}. Without loss of generality we may assume that $B = [r]$. Writing $D_1,\ldots,D_R$ in terms of the basis $\{D_b\mid b \in B\}$ for $\LL^*$ yields an $r \times R$ matrix $\cM = (m_{i,j})$ of the form
\begin{equation}
\label{eq:weight_matrix}
  \cM = \left(
    \begin{array}{c:ccc}
      \multirow{3}{*}{$\quad I_r\quad$} & m_{1,r+1} & \cdots & m_{1,R} \\
      & \vdots & & \vdots \\
      & m_{r,r+1} & \cdots & m_{r,R} \\
    \end{array}
  \right)
\end{equation}
where $I_r$ is an $r \times r$ identity matrix. Consider the function
\[
W = x_1 + x_2 + \cdots + x_R - k
\]
subject to the constraints
\begin{align}
\label{eq:ambient_constraints}
  \prod_{j=1}^R x_j^{m_{i,j}} = 1 && 1\leq i\leq r, \\
  \intertext{and}
\label{eq:hypersurface_constraints}
  \sum_{j \in S_i} x_j = 1 && 1\leq i\leq k.
\end{align}
For each $i \in U$, introduce a new variable $y_i$. For each $i \in [k]$, introduce new variables $y_j$, where $j \in S_i \setminus \{s_i\}$, and set $y_{s_i} = 1$. Solve the constraints~\eqref{eq:hypersurface_constraints} by setting:
\begin{align*}
  &x_j = \frac{y_j}{\sum_{l \in S_i} y_l} & j \in S_i, \\
  &x_j = y_j & j \in U,
\end{align*}
and express the variables $x_b$, $b \in B$, in terms of the $y_j$s using~\eqref{eq:ambient_constraints}. The function $W$ thus becomes a Laurent polynomial~$f$ in the variables $y_j$, $j \in [R] \setminus \{s_1,\ldots,s_k\}$. We refer to $y_j$, $j \in U$, as \emph{uneliminated variables}. 

Given data as in~\eqref{eq:lots_of_data}, let $f$ be the Laurent polynomial just defined. Let $Y$ denote the toric orbifold determined by $\Theta$, let $L_1,\ldots,L_k$ denote the line bundles on $Y$ from Definition~\ref{dfn:convex_partition_with_basis}, and let $X \subset Y$ be a complete intersection defined by a regular section of the vector bundle $\oplus_i L_i$. If $X$ is Fano, then Mirror Theorems due to Givental, Hori--Vafa, and others~\cite{Givental:toric,Hori--Vafa,CCIT:1,CCIT:2} imply that $f$ corresponds to $X$ under Mirror Symmetry; c.f.~\cite[\S5]{CKP}. We say that $f$ \emph{is a Laurent polynomial mirror} for $X$.

\begin{rem}\label{rem:translation}
  If $f$ is a Laurent polynomial mirror for $X$ then the Picard--Fuchs local system for $f \colon (\Cstar)^n \to \CC$ coincides, after translation of the base if necessary, with the Fourier--Laplace transform of the quantum local system for~$X$; see~\cite{CCGGK,CCGK}. Thus we regard $f$ and $g := f-c$, where $c$ is a constant, as Laurent polynomial mirrors for the same manifold $Y$, since the Picard--Fuchs local systems for $f$ and $g$ differ only by a translation of the base (by~$c$).
\end{rem}

\begin{rem}\label{rem:reparametrization}
  If $f$ and $g$ are Laurent polynomials that differ by an invertible monomial change of variables then the Picard--Fuchs local systems for~$f$ and~$g$ coincide. Thus $f$ is a Laurent polynomial mirror for $X$ if and only if $g$ is a Laurent polynomial mirror for $X$.
\end{rem}

\begin{eg}\label{eg:cubic_forwards}
  Let $X$ be a smooth cubic surface. The ambient toric variety $Y = \PP^3$ is a GIT quotient $\CC^4 /\!\!/ \Cstar$ where $\Cstar$ acts on $\CC^4$ with weights $(1,1,1,1)$. Thus $Y$ is given by GIT data $(K;\LL;D_1,\ldots,D_4;\omega)$ with $K=\Cstar$, $\LL = \ZZ$, $D_1=D_2=D_3=D_4=1$, and $\omega=1$. We consider the convex partition with basis $B$,~$S_1$,~$\emptyset$, where $B = \{1\}$ and $S_1 = \{2,3,4\}$, and take $s_1 = 4$. This yields
  \[
  \cM = \left( 
    \begin{array}{c:ccc}
      1&1&1&1
    \end{array}
  \right)
  \]
  and
  \[
  W = x_1 + x_2 + x_3 + x_4 - 1
  \]
  subject to 
  \begin{align*}
    x_1 x_2 x_3 x_4 = 1 && \text{and} && x_2 + x_3 + x_4 = 1.
  \end{align*}
  We set:
  \begin{align*}
    x_1 = \frac{1}{x_2 x_3 x_4}, &&
    x_2 = \frac{x}{1+x+y}, &&                                  
    x_3 = \frac{y}{1+x+y}, &&                                  
    x_4 = \frac{1}{1+x+y}, &&                                  
  \end{align*}
  where, in the notation above, $x = y_2$ and $y = y_3$. Thus
  \[
  f =  \frac{(1+x+y)^3}{xy}
  \]
  is a Laurent polynomial mirror to $Y$.
\end{eg}

\begin{eg}
  Let $Y$ be the projective bundle $\PP\big(\cO \oplus \cO \oplus \cO(-1)\big) \to \PP^3$. This arises from the GIT data $(K;\LL;D_1,\ldots,D_7;\omega)$ where $K = (\Cstar)^2$, $\LL = \ZZ^2$, 
  \begin{align*}
    D_1 = D_4 = D_6 = D_7 = (1,0), && D_2 = D_3 = (0,1), && D_5 = (-1,1),
  \end{align*}
  and $\omega = (1,1)$. We consider the convex partition with basis $B,S_1,S_2,U$ where $B = \{1,2\}$, $S_1 = \{3,4\}$, $S_2 = \{5,6\}$, $U = \{7\}$. This yields:
  \[
  \cM =
  \left(
  \begin{array}{cc:ccccc}
    1&0&0&1&-1&1&1\\
    0&1&1&0&1&0&0
  \end{array}
  \right)
  \]
  Choosing $s_1 = 3$ and $s_2 = 5$, we find that
  \[
  f = \frac{(1+x)}{xyz} + (1+x)(1+y) + z
  \]
  Here, in the notation above, $x = y_4$, $y = y_6$, and $z = y_7$.
\end{eg}

\section{Scaffolding}

In this section we give our central combinatorial construction: that of \emph{scaffolding}. The output from the Przyjalkowski method is a Laurent polynomial $f$ together with a decomposition of $f$ as a sum of terms $x_i$, each of which is a Laurent polynomial in the variables $y_j$. The Newton polytope of each of the terms $x_i$ is a product of translated dilates of standard simplices. Therefore each $\Newt(x_i)$ is the polyhedron $P_D$ of sections of a nef divisor $D$ on some (fixed) product of projective spaces. This motivates the following definition.

\begin{dfn}
\label{dfn:scaffolding}
Fix the following data:
\begin{enumerate}
\item a lattice $N$ together with a splitting $N = \bar{N} \oplus N_U$;
\item the dual lattice $M := \Hom(N,\ZZ)$, with the dual splitting $M = \bar{M} \oplus M_U$;
\item a Fano polytope $P \subset N_\QQ$;
\item a projective toric variety $Z$ given by a fan in $\bar{M}$ whose rays span the lattice $\bar{M}$.
\end{enumerate}
Given such data, a \emph{scaffolding} $S$ of $P$ is a set of pairs $(D,\chi)$ where $D$ is a nef divisor on $Z$ and $\chi$ is an element of $N_U$, such that
\[
P = \conv{P_D + \chi \, \Big| \, (D,\chi) \in S}.
\]
We refer to $Z$ as the \emph{shape} of the scaffolding, and the elements $(D,\chi) \in S$ as \emph{struts}.
\end{dfn}

\begin{lem}
\label{lem:scaffolding_from_poly}
Let $f$ be a Laurent polynomial produced using the Przyjalkowksi method in~\S\ref{sec:forward}. The polytopes $\Newt(x_i)$ determine a scaffolding of $P = \Newt(f)$ such that the shape $Z$ is the product of projective spaces
\[
Z := \PP^{|S_1|-1}\times\cdots\times\PP^{|S_k|-1}
\] 
and $S$ contains $r + |U|$ struts.
\end{lem}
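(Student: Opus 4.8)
The plan is to read the scaffolding off the shape of the surviving terms of~$f$. First I would record which terms survive: writing $\Sigma_i := \sum_{l \in S_i} y_l$, the substitution $x_j = y_j/\Sigma_i$ for $j \in S_i$ makes $\sum_{j \in S_i} x_j = 1$ identically, so the $k$ groups of ``hypersurface'' summands of $W$ contribute exactly $k$ and cancel the constant $-k$. Hence $f = \sum_{b \in B} x_b + \sum_{j \in U} x_j$ is a sum of precisely $r + |U|$ Laurent polynomials $x_i$, which will index the struts. Since each $x_i$ has non-negative coefficients there is no cancellation among leading monomials, so $\Newt(f) = \conv{\bigcup_i \Newt(x_i)}$, and it suffices to exhibit each $\Newt(x_i)$ as a translate $P_D + \chi$.

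Next I would fix the lattices. Let $J := U \cup \bigcup_i (S_i \setminus \{s_i\})$ index the surviving variables $y_j$, and set $N := \ZZ^J$ with splitting $N_U := \ZZ^U$ and $\bar{N} := \ZZ^{J \setminus U}$, so that $\Newt(f) \subset N_\QQ$. For the shape take $Z := \PP^{|S_1|-1} \times \cdots \times \PP^{|S_k|-1}$, presented by the product fan in $\bar{M} = \bar{N}^*$ whose rays are the $e_j^*$ for $j \in S_i \setminus \{s_i\}$ together with $-\sum_{j \in S_i \setminus \{s_i\}} e_j^*$, one block at a time; these rays span $\bar{M}$, as Definition~\ref{dfn:scaffolding} requires. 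A short computation then identifies the section polytope of $\cO_Z(\sum_i c_i H_i)$, where $H_i$ is the hyperplane class of the $i$th factor, with the Minkowski sum $\sum_i c_i \Delta_i$ of dilated standard simplices $\Delta_i = \Newt(\Sigma_i) \subset \bar{N}_\QQ$.

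The heart of the argument is the computation of $\Newt(x_b)$ for $b \in B$. By the normal form~\eqref{eq:weight_matrix}, the ambient constraint in row $b$ reads $x_b = \prod_{j > r} x_j^{-m_{b,j}}$; substituting $x_j = y_j/\Sigma_i$ and $x_j = y_j$ gives $x_b = \prod_i \Sigma_i^{\,c_{b,i}} \cdot \prod_{j>r} y_j^{-m_{b,j}}$, where $c_{b,i} := \sum_{j \in S_i} m_{b,j}$ is the $b$th coordinate of $L_i = \sum_{j \in S_i} D_j$ in the basis $\{D_b\}$. Condition~(v) of Definition~\ref{dfn:convex_partition_with_basis} forces $c_{b,i} \geq 0$, so $D_b := \sum_i c_{b,i} H_i$ is nef and $\Newt(\prod_i \Sigma_i^{c_{b,i}}) = \sum_i c_{b,i} \Delta_i = P_{D_b}$. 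The trailing monomial translates this polytope by $-\sum_{j>r} m_{b,j} e_j$; I would split that vector into its $\bar{N}$- and $N_U$-components, absorb the $\bar{N}$-component by passing to the linearly equivalent (still nef) torus-invariant representative of $D_b$ whose section polytope is the corresponding translate, and set $\chi_b := -\sum_{j \in U} m_{b,j} e_j \in N_U$. For $j \in U$, the term $x_j = y_j$ gives the single point $\Newt(x_j) = \{e_j\} = P_0 + e_j$ with the trivial nef divisor. The resulting $r + |U|$ struts $(D_b, \chi_b)$ and $(0, e_j)$ then realise $\Newt(f) = \conv{\bigcup (P_D + \chi)}$.

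I expect the main obstacle to be the bookkeeping in this last step: keeping the lattice identifications consistent so that the $\bar{N}$-component of each trailing monomial is genuinely a toric (lattice) translation of $P_{D_b}$ --- hence the section polytope of a nef divisor linearly equivalent to $D_b$ --- while the $N_U$-component lands exactly in $N_U$. Checking that $P = \Newt(f)$ is a Fano polytope is a separate and milder point, which I would settle from the mirror-symmetry hypotheses on $X$ (equivalently, by noting that $0$ lies in the interior of $\Newt(f)$).
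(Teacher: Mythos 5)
Your proof is correct and takes essentially the same route as the paper's: decompose $f$ into the surviving terms $x_b$, $b \in B$, and $x_j$, $j \in U$, and identify each $\Newt(x_b)$ as the polytope of sections of a nef divisor on $Z = \PP^{|S_1|-1}\times\cdots\times\PP^{|S_k|-1}$ translated by an element of $N_U$, with the uneliminated variables giving struts $(\cO, e_j)$. The paper's proof merely asserts this identification, whereas you verify it explicitly — deriving nefness of $D_b$ from condition~(v) of Definition~\ref{dfn:convex_partition_with_basis}, absorbing the $\bar{N}$-part of the translation into a linearly equivalent torus-invariant divisor, and noting the no-cancellation point needed for $\Newt(f) = \conv{\bigcup_i \Newt(x_i)}$ — so your write-up simply supplies details the paper leaves implicit.
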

\begin{proof}
The polytope $P$ is the convex hull of the union of the polytopes $\Newt(x_i)$ for $x_i$ not appearing in any of the equations~\eqref{eq:hypersurface_constraints}. There is a splitting of $N$ into the sublattice $N_U$ spanned by the exponents of uneliminated variables $y_j$, $j \in U$, and the sublattice $\bar{N}$ spanned by the exponents of variables $y_i$, $i \not \in U$. If $y_j$ is an uneliminated variable, add the strut $(\cO,\Newt(y_j))$ to $S$. For $i \not \in U$, $\Newt(x_i)$ is the polyhedron of sections of a nef divisor $D$ on $Z$, translated by an element $\chi \in N_U$, and we add the strut $(D,\chi)$ to $S$. By construction $P$ is the convex hull of this collection of struts. 
\end{proof}

\begin{rem}\label{rem:uneliminated_basis}
Note that any scaffolding generated by the proof of Lemma~\ref{lem:scaffolding_from_poly} contains a collection of struts $\{(\cO,e_i) \mid i \in I\}$ for an index set $I$, corresponding to uneliminated variables, such that the collection $\{e_i \mid i \in I\}$ forms a basis of $N_U$. Although not the most general setting possible, we will assume from here onwards that this condition holds for every scaffolding.
\end{rem}

Using the shape $Z$ we can phrase the `inversion' technique as a double application of Mirror Symmetry. Going forwards we start from a complete intersection $X \subset Y$ and form a Laurent polynomial $f$. The scaffolding obtained in the proof of Lemma~\ref{lem:scaffolding_from_poly} expresses $f$ as a sum of sections of nef divisors on $Z$. Going backwards, the Givental/Hori--Vafa mirror of $Z$ is a torus fibration $Z^\vee$ together with a regular function $W$ on $Z^\vee$. The nef divisors we found to describe $f$ determine the compactifying boundary divisors of $Z^\vee \subset Y$.

\begin{eg}[$dP_3$]\label{ex:dP3_start}
Consider the Laurent polynomial
\[
f=\frac{(1+x+y)^3}{xy}
\]
from Example~\ref{eg:cubic_forwards}. A scaffolding for $\Newt(f)$ is given by a single standard 2-simplex, dilated by a factor of three:
\begin{center}
\includegraphics{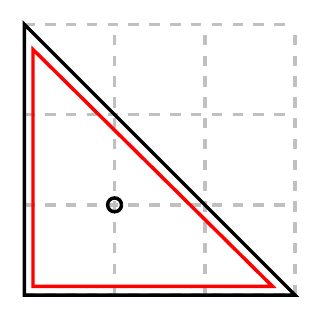}
\end{center}
This gives a scaffolding of $\Newt(f)$ by single strut, with no uneliminated variables. The shape $Z$ is $\PP^2$ and the strut is given by choosing the entire toric boundary of $\PP^2$.
\end{eg}

\begin{eg}[$dP_6$]
\label{ex:dP6_foreshadow}
Consider the Laurent polynomial
\[
f=x+y+\frac{1}{x}+\frac{1}{y}+\frac{x}{y}+\frac{y}{x}.
\]
This is a mirror to the del~Pezzo surface $dP_6$. We may scaffold $\Newt(f)$ in two different ways, using either three triangles or a pair of squares:
\begin{center}
\includegraphics{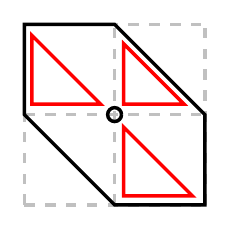}
\raisebox{30px}{$\quad\text{ and }\quad$}
\includegraphics{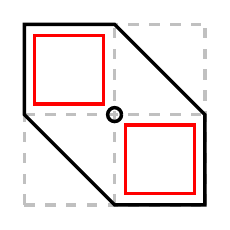}
\end{center}
These choices correspond, respectively, to the decompositions
\[
f=(1+x+y) +\frac{(1+x+y)}{x}+\frac{(1+x+y)}{y} - 3
\ \text{ and }\ 
f=\frac{(1+x)(1+y)}{x}+\frac{(1+x)(1+y)}{y} - 2.
\]
As discussed in Remark~\ref{rem:translation}, we ignore the constant terms.
\end{eg}

\section{A Dual Perspective on Scaffolding}\label{sec:dual_perspective}

There is a dual characterisation of scaffolding which is often useful in applications. Instead of considering the polytope $P$, we consider the cone $C(P^*)$ over the dual polytope $P^*$  embedded at height one in $M_\QQ \oplus \QQ$, and interpret the struts of a scaffolding as certain cones whose common intersection is exactly $C(P^*)$.

\begin{dfn}
Given a Fano polytope $P$, let $C(P^*)$ be the cone obtained by embedding the rational polytope $P^*$ in $M_\QQ \oplus \{1\}$ and forming the cone over this affine polytope. Given a scaffolding $S$ of $P$ and a strut $s = (D, \chi)$ in $S$, define $C_s$ to be the cone
\[
C_s := \left\{ (\bar{m},u,z) \in \big(\bar{M}\oplus M_U\big)_\QQ \oplus \QQ \mid  z \geq \phi_D(\bar{m}) + \chi(u)\right\} \subset M_\QQ \oplus \QQ
\]
where $\phi_D$ is the piecewise linear function on $\bar{M}$ determined by the $\QQ$-Cartier divisor $D$ on $Z$.
\end{dfn}

\begin{rem}
Recall that a torus invariant Weil divisor $D \in \Div_{T_{\bar{M}}}(Z)$ is, by definition, an integer-valued function on the set of rays of the fan $\Sigma_Z$ determined by $Z$. The divisor $D$ is $\QQ$-Cartier if and only if this function is realised by a piecewise linear function $ \phi_D$ on the fan of $Z$. Moreover the divisor $D$ is nef if and only if the function $\phi_D$ is convex. The polyhedron of sections $P_D$ of the divisor $D$ is defined as the intersection of half-spaces $\langle \rho,- \rangle \geq -\phi_D(\rho)$ where $\rho$ ranges over the integral generators of the rays of $\Sigma_Z$. Thus the rays of the cone $C_s$ are generated by pairs $(\rho,k)$ where $k = (\phi_D-\chi)(\rho)$ is the height of the supporting hyperplane of the strut $P_D+\chi$.
\end{rem}

\noindent We can now interpret $S$ as a collection of cones whose mutual intersection is equal to $C(P^*)$.

\begin{lem}
Given data as in (i)--(iv) of Definition~\ref{dfn:scaffolding} and a collection $S$ of pairs $s = (D, \chi)$, where $D$ is a nef divisor on $Z$ and $\chi \in N_U$, then $S$ is a scaffolding if and only if
\[
\bigcap_{s \in S}{C_s} = C(P^*).
\]
\end{lem}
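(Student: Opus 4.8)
The plan is to homogenise every polytope in sight, turning the affine translations by $\chi$ into linear data, and then to invoke the standard duality that exchanges convex hulls of unions with intersections of polar duals. For a polytope $Q\subset N_\QQ$ write $\widehat{Q}:=\cone(Q\times\{1\})\subset N_\QQ\oplus\QQ$ for the cone over $Q$ placed at height one, and let $(\,\cdot\,)^\vee$ denote the dual cone inside $M_\QQ\oplus\QQ$. Homogenisation is the right device here precisely because a strut $P_D+\chi$ need not contain the origin---$P_D$ lives in $\bar N_\QQ$ and is pushed off into the $N_U$-direction by $\chi$---so its polar only behaves well after passing to the cone $\widehat{P_D+\chi}$.

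I would first record two dictionary identities. Since $P$ is Fano we have $0\in\Ins P$, and unwinding the definition of the polar $P^*$ shows that the cone $C(P^*)$ appearing in the statement is exactly $\widehat{P}^{\,\vee}$. Secondly, and this is the crux, I claim each $C_s$ equals $\widehat{P_D+\chi}^{\,\vee}$. This is where the preceding Remark does the work: it identifies $\phi_D$ with the convex support function attached to the nef $\QQ$-Cartier divisor $D$, and $P_D$ with the corresponding polytope of sections, so that the support function of the strut $P_D+\chi$ is the positively homogeneous map $(\bar m,u)\mapsto\phi_D(\bar m)+\chi(u)$. The cone $C_s$ is by definition the epigraph of this function, which is exactly the dual cone $\widehat{P_D+\chi}^{\,\vee}$.

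With these identities in hand the rest is formal. Homogenisation commutes with convex hulls, so $\cone\bigl(\bigcup_{s\in S}\widehat{P_D+\chi}\bigr)=\widehat{R}$ where $R:=\conv{\bigcup_{s\in S}(P_D+\chi)}$, and dualising a conical hull of a union yields the intersection of the duals:
\[
\bigcap_{s\in S} C_s = \bigcap_{s\in S}\widehat{P_D+\chi}^{\,\vee} = \Bigl(\cone\Bigl(\bigcup_{s\in S}\widehat{P_D+\chi}\Bigr)\Bigr)^{\vee} = \widehat{R}^{\,\vee}.
\]
By definition $S$ is a scaffolding precisely when $R=P$. Combining this with $C(P^*)=\widehat{P}^{\,\vee}$, the equivalence reduces to the claim that $\widehat{R}^{\,\vee}=\widehat{P}^{\,\vee}$ if and only if $R=P$. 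This follows because $\widehat{(\,\cdot\,)}$ is injective---one recovers a polytope as the height-one slice of its cone---and $(\,\cdot\,)^\vee$ is an involution on full-dimensional closed convex cones, so applying it to both sides and slicing at height one returns $R=P$.

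The main obstacle is the identity $C_s=\widehat{P_D+\chi}^{\,\vee}$ of the second paragraph: one must fix the support-function conventions with care, keeping track of the single reflection implicit in polar duality (so that $C(P^*)$ corresponds to the support function of $-P$), and check that this reflection is accounted for consistently on both tensor factors---absorbed into $\phi_D$ on the $\bar M$-part, and carried by the linear term $\chi(u)$ on the $M_U$-part. The only other points needing attention are routine: that the cones involved are closed and full-dimensional so that biduality applies, and that each $P_D$ is a genuine bounded polytope, which holds because $Z$ is projective and $D$ is nef.
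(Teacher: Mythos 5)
Your proof is correct, and it takes a genuinely different route from the paper's. The paper argues by hand in two steps: first, using that nefness of $D$ makes $C_s$ convex, it reduces the inclusion $C(P^*)\subseteq C_s$ to a ray-by-ray check --- for each vertex $\rho'$ of $P^*$ it compares the minimal height $-1$ at which the hyperplane $H_{\rho',r}$ meets $P$ with the minimal height $-(\phi_D-\chi)(\rho')$ at which it meets the strut --- thereby proving that $C(P^*)\subseteq C_s$ if and only if $P_D+\chi\subseteq P$; second, it handles the equality case by matching facets of $C(P^*)$ with facets of the cones $C_s$. You instead identify every cone in sight as a dual cone, $C(P^*)=\widehat{P}^{\,\vee}$ and $C_s=\widehat{P_D+\chi}^{\,\vee}$, after which the lemma collapses to the formal identities ``the dual of the conical hull of a union is the intersection of the duals'' and biduality of closed convex cones. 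Your version is shorter, and it is actually tighter in the ``only if'' direction: the paper's converse, as written, only extracts that each facet hyperplane of $P$ supports the hull of the struts, which by itself does not force that hull to equal $P$ (an inscribed polytope touching every facet would pass this test); biduality closes that gap automatically. What the paper's route buys is the per-strut statement $C(P^*)\subseteq C_s \iff P_D+\chi\subseteq P$, which is of independent use, and a very visible role for nefness; in your argument nefness is used just as essentially, but it is hidden inside the identity $\phi_D(\bar m)=-\min_{n\in P_D}\langle\bar m,n\rangle$ that underlies $C_s=\widehat{P_D+\chi}^{\,\vee}$.

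On the sign issue you flagged as the main obstacle: your caution was warranted, and your reading is the right one. With the paper's conventions ($P_D$ cut out by $\langle\rho,-\rangle\geq-\phi_D(\rho)$, and nef equivalent to $\phi_D$ convex), the dual cone of the cone over the strut is the epigraph of $(\bar m,u)\mapsto\phi_D(\bar m)-\chi(u)$, whereas the displayed definition of $C_s$ has $+\chi(u)$. With the literal plus sign both your key identity and the lemma itself fail as soon as some $\chi\neq 0$: take $Z$ a point, $N=N_U=\ZZ^2$, and struts the three vertices of $P=\conv{(1,0),(0,1),(-1,-1)}$; then the intersection of the literal cones $C_s$ is the image of $C(P^*)$ under $u\mapsto -u$, which differs from $C(P^*)$ since $P$ is not centrally symmetric. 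The paper's own proof silently uses the minus sign (its supporting heights are $-(\phi_D-\chi)(\rho')$), so the $+\chi(u)$ in the definition is a typo, and your identity $C_s=\widehat{P_D+\chi}^{\,\vee}$ is exactly the intended meaning.
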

\begin{proof}
  Given a pair $s = (D,\chi) \in \Nef(Z)\times N_U$ we prove that $C(P^*) \subseteq C_s$ if and only if the strut $P_D + \chi \subset P$. Since $D$ is nef, $C_s$ is a convex cone and so without loss of generality we can replace the condition that $C(P^*) \subset C_s$ with the condition that each of the rays of $C(P^*)$ is contained in $C_s$. Fixing a ray of $C(P^*)$ generated by an element $\rho \in M\oplus \ZZ$, recall that $\rho = (\rho',1)$ where $\rho'$ is a vertex of $P^*$. Considering the family of hyperplanes $H_{\rho',r} := \big\{ n \in N_\QQ \mid \langle\rho', n\rangle = r\big\}$,~$r \in \QQ$, we see that $-1$ is the minimal $r$ such that $H_{\rho',r}$ meets $P^*$ and that the minimal value of $r$ such that $H_{\rho',r}$ meets $P_D+\chi$ is $-(\phi_D-\chi)(\rho')$. Thus $P_D + \chi \subset P$ if and only if $-(\phi_D-\chi)(\rho') \geq -1$ for all $\rho'$.

It remains to show that equality holds for the inclusion
\[
C(P^*) \subseteq \bigcap_{s \in S}{C_s}
\]
precisely when $S$ is a scaffolding. In other words we need to show that the equality $C(P^*) = \bigcap_{s \in S}C_s$ is equivalent to the condition that
\[
P = \conv{P_D + \chi \, \Big| \, (D,\chi) \in S}.
\]
If $P$ is the convex hull of the polytopes $P_D + \chi$ then every vertex of $P$ meets a strut $P_D+\chi$. In that case every facet of $C(P^*)$ is contained in a facet of some $C_s$ and so, in particular, the intersection of the cones $C_s$ is contained in the cone $C(P^*)$. Conversely if the intersection of cones $C_s$ is equal to $C(P^*)$ then every ray $\langle (\rho',1) \rangle$ of $C(P^*)$ is contained in some $C_s$, and therefore the minimal $r \in \QQ$ such that $H_{\rho',r}$ meets \emph{some} polytope $P_D+\chi$ is equal to $-1$.
\end{proof}

\begin{eg}  
\label{eg:dP6_before}
\begin{figure}[htb!]
  \centering
  \includegraphics[width=\textwidth]{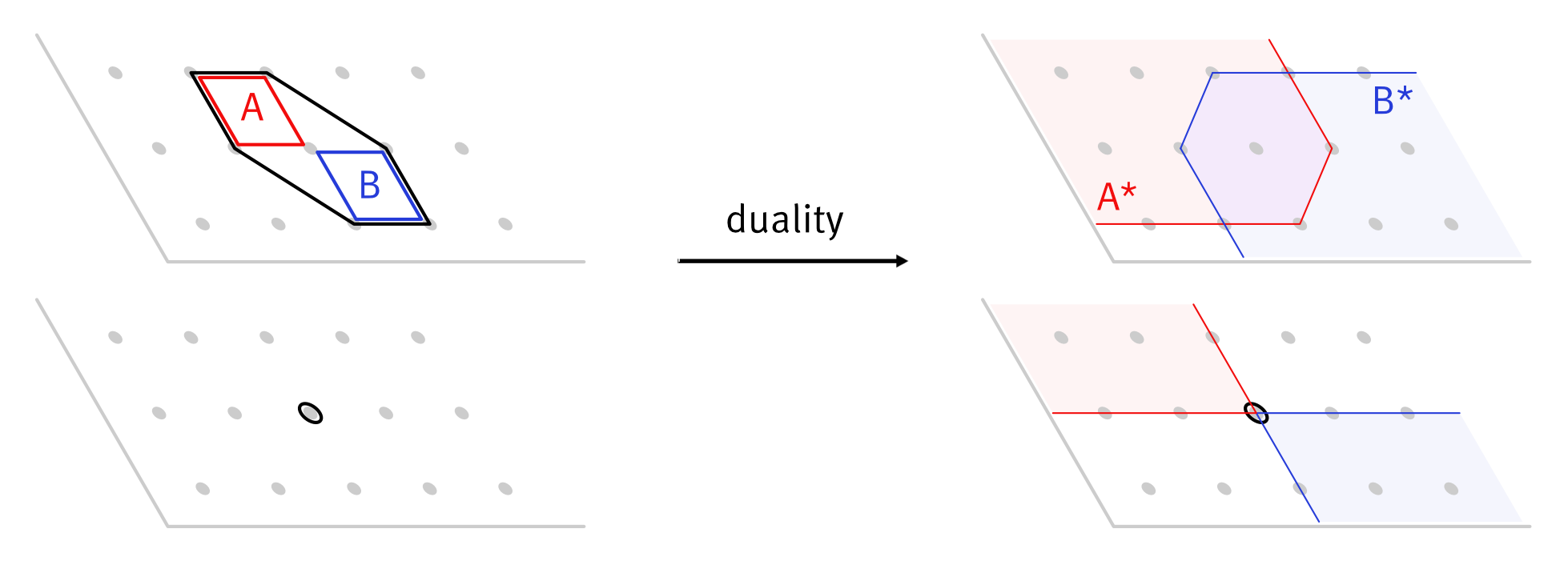}
  \caption{The dual picture of one of the scaffoldings from Example~\ref{ex:dP6_foreshadow}.}
\label{fig:dP6_before}
\end{figure}

Consider the right-hand scaffolding in Example~\ref{ex:dP6_foreshadow}. This is shown again on the left-hand side of Figure~\ref{fig:dP6_before}, placed at height~$1$ in $N_\QQ \oplus \QQ$ with the struts labelled as $A$ and~$B$. The corresponding cones $C_A$ and $C_B$ in $M_\QQ\oplus \QQ$ are shown on the right-hand side of Figure~\ref{fig:dP6_before}: $C_A$ is the cone over the dual polyhedron $A^*$, placed at height~$1$ in $M_\QQ\oplus \QQ$, and similarly for $C_B$. The tail cones $T_{A^*}$ of $A^*$ and $T_{B^*}$ of $B^*$ are shown at height zero: these are faces of $C_A = C(A^*) = C(A)^\vee$ and $C_B = C(B^*) = C(B)^\vee$ respectively. The shape $Z$ can be recovered by projecting the facets of $C_A$ and $C_B$ onto the height-zero slice in $M_\QQ \oplus \QQ$; this gives the fan of $Z = \PP^1 \times \PP^1$. The heights of the rays of $C_A$ (respectively $C_B$) determine a divisor $D_A = D_1 + D_2$ (respectively $D_B = D_3 + D_4$) on $Z$. The strut $A$ can be recovered as the polytope of sections of $\cO(D_A)$, and similarly for $B$.
\end{eg}

Note that in this dual perspective it makes sense to relax the condition that the divisors $D$ of struts $s = (D,\chi)$ be nef on $Z$. Indeed, the new definition of scaffolding makes sense so long as $D$ is $\QQ$-Cartier, the cost of which is that the cones $C_s$ cease to be convex. (Recall that the convexity of $C_s$ is equivalent to $D$ being a nef divisor.)  Whilst we do not explore this further here, we hope that this notion will prove useful in the study of polytopes up to mutation.

\section{Laurent Inversion}
\label{sec:laurent_inversion}

We have seen that if $X$ is a Fano toric complete intersection defined by convex line bundles $L_1,\ldots, L_k$ on a toric orbifold $Y$, then there is a Laurent polynomial mirror $f$ for $X$ and a decomposition 
\begin{equation}
\label{eq:scaffolding}
  f =  f_1 + \cdots + f_r + \sum_{u \in U} x_u
\end{equation}
where
\[
f_a = \prod_{i=1}^k \prod_{j \in S_i} \left( \frac{\sum_{l \in S_i} y_l}{y_j} \right)^{m_{a,j}} \times \prod_{u \in U} x_u^{-m_{a,u}}.
\]
This decomposition of $f$ determines GIT data $(K;\LL;D_1,\ldots,D_R)$ for $Y$, except for the stability condition, and also the line bundles $L_1,\ldots,L_k$. Indeed all of this data can be recovered from the scaffolding $S$ of $\Newt(f)$ given by Lemma~\ref{lem:scaffolding_from_poly}. In this section we generalise this observation, describing how to pass from a scaffolding $S$ of a Fano polytope $P$ to a toric variety $Y$ and a toric embedding $X_P \to Y$.

\begin{algorithm}\label{alg:laurent_inversion}
Let $S$ be a scaffolding of a Fano polytope $P$ with shape $Z$. Let $u = \dim N_U$ and let $r = |S| - u$, so that $S$ contains $r$ struts that do not correspond to uneliminated variables and $u$ struts that do correspond to uneliminated variables (see Remark~\ref{rem:uneliminated_basis}). Let $R$ be the sum of $|S|$ and the number $z$ of rays of $Z$. We determine an $ r \times R$ matrix $\cM$, which will be the weight matrix for our toric variety $Y$, as follows. Let $m_{i,j}$ denote the $(i,j)$ entry of $\cM$. Fix an identification of the rows of $\cM$ with the $r$ elements $(D_i,\chi_i)$ of $S$ which do not correspond to uneliminated variables, and an ordering $\Delta_1,\ldots,\Delta_z$ of the toric divisors in $Z$. Let $e_1,\ldots,e_u$ be the basis of $N_U$ given by Remark~\ref{rem:uneliminated_basis}.
\begin{enumerate}
\item For $1 \leq j \leq r$ and any $i$, let $m_{i,j} = \delta_{i,j}$.
\item For $1 \leq j \leq u$ and any $i$, let $m_{i,r+j}$ be determined by the expansion 
\[
\chi_i = \sum_{j=1}^u{m_{i,r+j}e_j}.
\]
\item For $1 \leq j \leq z$, let $m_{i,|S|+j}$ be determined by the expansion
\[
D_i = \sum_{j=1}^z{m_{i,|S|+j}}\Delta_j.
\]
\end{enumerate}
The weight matrix $\cM$ alone does not determine a unique toric variety -- we also need to choose a stability condition $\omega$. Let $Y_\omega$ denote the toric variety determined by this choice. Unless otherwise stated, we will take $\omega$ to be the sum of the first $|S|$ columns in $\cM$. 
\end{algorithm}

\begin{rem}
  In terms of the dual perspective on scaffoldings in~\S\ref{sec:dual_perspective}, the entry $m_{i,|S|+j}$ in the matrix $\cM$ is the height in $M_\QQ \oplus \QQ$ of the $j$th ray in the $i$th cone $C_s$.
\end{rem}

\begin{rem}
  In the case where the scaffolding $S$ arises from a toric complete intersection $X$ via Lemma~\ref{lem:scaffolding_from_poly}, the choice of $\omega$ given above is equal to $-K_X-\sum_{i \in [k]}L_i$. The corresponding convex partition with basis $B$, $S_1$, \ldots, $S_k$, $U$ can be recovered by setting $B = \{1,2,\ldots,r\}$, $U = \{r+1,\ldots, r+u\}$, and $S_j$ equal to the subset of $\{|S|+1,\ldots,|S|+z\}$ given by the toric divisors on the $j$th factor $\PP^{a_j}$ of $Z = \prod_{i=1}^k \PP^{a_i}$.
\end{rem}

\begin{rem}
The ray lattice of $Y_\omega$, that is, the lattice of one-parameter subgroups of the dense torus in $Y_\omega$, is equal to
$\Div_{T_{\bar{M}}}(Z)\oplus N_U$. 
\end{rem}

In favourable cases, a suitable choice of stability condition $\omega$ gives a smooth toric orbifold $Y_\omega$ and convex line bundles $L_1,\ldots,L_k$ on $Y_\omega$ such that the complete intersection $X \subset Y_\omega$ defined by a regular section of the vector bundle $\oplus_i L_i$ is Fano. This can be very useful, and we use it in~\S\ref{sec:new_4d} to exhibit a new four dimensional Fano manifold. However our construction is not restricted to the case where the scaffolding comes from a toric complete intersection via Givental/Hori--Vafa mirror symmetry; that is, we do not insist that the shape $Z$ is a product of projective spaces. In the Appendix we prove:

\begin{thm}
\label{thm:embedding}
A scaffolding $S$ of $P$ determines a torus invariant embedding of $X_P$ into $Y_\omega$.
\end{thm}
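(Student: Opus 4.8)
The plan is to construct the embedding $X_P \to Y_\omega$ directly from the combinatorial data of the scaffolding and then verify that it is a well-defined torus-invariant closed immersion. I would work throughout with the dual perspective of~\S\ref{sec:dual_perspective}, where the scaffolding is encoded by the cones $C_s$ with $\bigcap_{s \in S} C_s = C(P^*)$, since this is precisely the language in which the weight matrix $\cM$ of Algorithm~\ref{alg:laurent_inversion} was defined (each entry $m_{i,|S|+j}$ being a ray height in $M_\QQ \oplus \QQ$). The toric variety $X_P$ is the one whose spanning fan is the face fan of $P$; its ray lattice is $N = \bar{N} \oplus N_U$, and its one-dimensional cones are generated by the vertices of $P$. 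The ambient space $Y_\omega$, by the remark following the algorithm, has ray lattice $\Div_{T_{\bar M}}(Z) \oplus N_U$, with the columns of $\cM$ recording the characters $D_1, \ldots, D_R$ of $K$.

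\textbf{Key steps.} First I would produce the lattice map $N \to \Div_{T_{\bar M}}(Z) \oplus N_U$ underlying the embedding. A toric morphism is determined by a map of the one-parameter subgroup lattices that carries cones of the source fan into cones of the target fan; dually, on the level of the GIT presentations~\eqref{eq:fan_sequence}--\eqref{eq:divisor_sequence}, it is cleaner to give the map $M_{Y_\omega} \to M_{X_P}$ of character lattices, or equivalently to exhibit a surjection of Cox rings compatible with the torus actions. The natural candidate uses the struts: each strut $s = (D, \chi)$ supplies, via the piecewise-linear function $\phi_D$ and the translation $\chi$, a supporting function whose associated cone $C_s$ is a face-wise enlargement of $C(P^*)$. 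Second, I would use the scaffolding condition $\bigcap_{s} C_s = C(P^*)$ to show that under this map every cone in the face fan of $P$ maps into a cone of the fan of $Y_\omega$: a vertex $v$ of $P$ lies on the boundary of some strut $P_D + \chi$, and the corresponding facet of $C(P^*)$ is then a facet of the cone $C_s$, which is exactly the compatibility needed for the ray generated by $v$ to land in the appropriate cone of $Y_\omega$. Third, I would verify that the map is a \emph{closed immersion} and not merely a morphism: torus-invariant embeddings of toric varieties correspond to surjections $N \twoheadrightarrow N'$ of lattices (or, more generally, to sublattice-and-fan data identifying $X_P$ with a toric subvariety), so I would check that the projection recovering $N = \bar N \oplus N_U$ from $\Div_{T_{\bar M}}(Z) \oplus N_U$ sends the map's image isomorphically onto $X_P$, using that the rays of $Z$ span $\bar M$ (condition (iv) of Definition~\ref{dfn:scaffolding}) so that $\Div_{T_{\bar M}}(Z) \to \bar N$ dualises a genuine inclusion. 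Finally I would confirm torus-invariance, which is automatic from the construction since everything is phrased equivariantly.

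\textbf{Main obstacle.} The hard part will be the second step: showing that the scaffolding identity on cones translates precisely into the fan-compatibility (the star/face condition) required for a well-defined toric \emph{embedding}, rather than just a rational or stacky map. The subtlety is that $Y_\omega$ depends on the stability condition $\omega$, and the face fan of $P$ must be shown to be a \emph{subfan} pulled back from the fan of $Y_\omega$ under the lattice map; one must rule out the image cones being too large (which would make the map fail to be injective on each orbit) or too small (which would make it fail to be proper onto its image). I expect the convexity of each $C_s$ (equivalently the nefness of $D$), together with the exactness of the sequences~\eqref{eq:fan_sequence}--\eqref{eq:divisor_sequence}, to be exactly what guarantees the cones match up correctly, but making this rigorous -- tracking how the chosen $\omega = \sum_{i=1}^{|S|} (\text{columns})$ determines which cones of $\cM$'s secondary fan are present and checking these are compatible with the face fan of $P$ -- is where the real work lies. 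The remaining steps (constructing the lattice map, checking torus-invariance and the closed-immersion property) should follow from standard toric dictionary arguments once this compatibility is established.
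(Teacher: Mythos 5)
Your plan has the right architecture -- the lattice map $N = \bar N \oplus N_U \to \Div_{T_{\bar M}}(Z) \oplus N_U$ you propose is exactly the one the paper uses (characters of $T_{\bar M}$ to principal divisors on the first factor, identity on the second), and you correctly identify that the whole theorem hinges on showing that the face fan of $P$ is precisely the pullback of the fan of $Y_\omega$ along this inclusion. But that is where the proposal stops: the step you label the ``main obstacle'' is the entire content of the paper's Appendix, and your sketch for it does not close the gap. Your step~2 argument (a vertex $v$ of $P$ lies on a strut, hence the corresponding facet of $C(P^*)$ is a facet of some $C_s$, hence the ray through $v$ ``lands in the appropriate cone'') at best shows that each cone of the spanning fan of $P$ maps \emph{into} some cone of $\Sigma_S$. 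That only yields a toric \emph{morphism} $X_P \to Y_\omega$; it does not rule out your own ``too large'' failure mode, i.e.\ it does not show that the cones of $\Sigma_S$ intersect $N_\QQ$ in exactly the cones over faces of $P$ and nothing bigger. Hoping that ``convexity of each $C_s$'' and the exact sequences \eqref{eq:fan_sequence}--\eqref{eq:divisor_sequence} will force this is not an argument.

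What the paper actually does to close this gap is a substantial construction with no counterpart in your plan: it introduces the polytope $Q_S$ polarising $Y_\omega$ (this is also how the choice of $\omega$ is handled, rather than via the secondary-fan bookkeeping you anticipate), decomposes $Q = P^*$ into chambers $C_u$ cut out by the fan of $Z$, and builds a piecewise-linear section $\iota$ of the projection $\tM_\QQ \to M_\QQ$ which is linear on each chamber. It then proves three statements -- each $\iota(C_u)$ is a face of $Q_S$ (Proposition~\ref{pro:iota_union}), every defining inequality of $Q_S$ is an actual facet (Lemma~\ref{lem:all_rays}), and for each face $E$ of $P$ an explicitly described cone $C_E$ of $\Sigma_S$ satisfies $C_E \cap N_\QQ = \cone(E)$ (Proposition~\ref{pro:good_cones}) -- and assembles them into the commuting diagram~\eqref{eq:diagram}, which identifies the pullback of $\Sigma_S$ with the spanning fan of $P$ (completeness of the latter then forces equality). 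None of this machinery, nor any substitute for it, appears in your proposal. A minor additional slip: torus-invariant closed embeddings of the kind needed here correspond to \emph{injections} of cocharacter lattices (closures of subtori), not to surjections $N \twoheadrightarrow N'$ as you state; you recover from this because the map you actually construct is the injection dual to the surjective ray map of $Z$, but the stated dictionary is backwards.
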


\noindent Thus \emph{any} scaffolding of a Fano polytope $P$ determines a toric embedding of the corresponding Fano toric variety $X_P$ into an ambient toric variety. If the scaffolding $S$ arises, via Lemma~\ref{lem:scaffolding_from_poly}, from a Fano toric complete intersection $X$ defined by convex line bundles $L_1,\ldots,L_k$ on a Fano toric orbifold $Y$, then Theorem~\ref{thm:embedding} embeds $X_P$ as a complete intersection in a toric variety $Y_\omega$ defined using the same GIT data as $Y$ (but with a possibly-different stability condition $\omega$); see~\S\ref{sec:embedded_ci}. There is then often an embedded degeneration from $X$ to $X_P$. In general, however, the embedding in Theorem~\ref{thm:embedding} is not a complete intersection, and $X_P$ may not have an embedded smoothing inside~$Y_\omega$. Example~\ref{ex:Pf} is instructive here.

The map of tori in Theorem~\ref{thm:embedding}, of which the embedding  $X_P \hookrightarrow Y_\omega$ is the closure in $Y_\omega$, is as follows. The dense tori in $X_P$ and $Y_\omega$ are $T_N$ and $T_{\widetilde{N}}$ respectively. There is a map 
\[
\bar{N} \oplus N_U = N \rightarrow \tilde{N} = \Div_{T_{\bar{M}}}(Z)\oplus N_U
\] 
defined on each factor as:
\begin{enumerate}
\item $\bar{N} \rightarrow \Div_{T_{\bar{M}}}(Z) \oplus \{0\}$, the map taking characters of $T_{\bar{M}}$ to principal divisors;
\item $N_U \rightarrow \{0\} \oplus N_U$, the identity map.
\end{enumerate}

\noindent For example, if $Z$ is a product of projective spaces then the ray map dualises to give an inclusion of tori $T_N \hookrightarrow T_{\widetilde{N}}$ with ideal generated by binomials of the form $(\prod{x_i} = 1)$, where the product is taken over variables corresponding to divisors in the same projective space factor.

\section{Examples}

In this section we apply Algorithm~\ref{alg:laurent_inversion} to several concrete examples.

\begin{eg}[$dP_3$]
Continuing Example~\ref{ex:dP3_start}, recall the scaffolding obtained from a mirror to $dP_3$ given by a single standard 2-simplex, dilated by a factor of three:
\begin{center}
\includegraphics{dP3}
\end{center}
From this we read off $u=0$, $r=1$, $R=4$, $B = \{1\}$, $U = \varnothing$, $S_1 = \{2,3,4\}$, and
\[
\cM = \left( 
  \begin{array}{c:ccc}
    1&1&1&1
  \end{array}
\right).
\]
This gives GIT data $\Theta = (K;\LL;D_1,\ldots,D_4;\omega)$ with $K=\Cstar$, $\LL = \ZZ$, $D_1=D_2=D_3=D_4=1$, and $\omega=1$; note that the secondary fan here has a unique maximal cone.
The corresponding toric variety is $Y = \PP^3$. The ideal defining $X_P$ is principal in Cox co-ordinates on $Y$, generated by the equation $X_1X_2X_3 - X^3_0$. This is a section of the nef line bundle $\cO(3)$. Thus $B,S_1,\varnothing$ is a convex partition with basis for $\Theta$, and we obtain the cubic hypersurface as in Example~\ref{eg:cubic_forwards}.
\end{eg}

\begin{eg}[$dP_6$]
\label{ex:dP6}
The projective plane blown up in three points, $dP_6$, is toric, but it has two famous models as a complete intersection:
\begin{enumerate}
\item as a hypersurface of type $(1,1,1)$ in $\PP^1\times\PP^1\times\PP^1$;
\item as the intersection of two bilinear equations in $\PP^2\times\PP^2$.
\end{enumerate}
Recall the two scaffoldings from Example~\ref{ex:dP6_foreshadow}, which arose from the two decompositions
\[
f=(1+x+y) +\frac{(1+x+y)}{x}+\frac{(1+x+y)}{y} - 3
\ \text{ and }\ 
f=\frac{(1+x)(1+y)}{x}+\frac{(1+x)(1+y)}{y} - 2
\]
of a Laurent polynomial mirror $f$ for $dP_6$.

From the first scaffolding we read off $u=0$, $r=3$, $Z=\PP^2$, $R=6$, $B = \{1,2,3\}$,  $U = \varnothing$, $S_1 = \{4,5,6\}$, and
\[
\cM = \left(
\begin{array}{ccc:ccc}
  1 & 0 & 0 & 1 & 0 & 0 \\
  0 & 1 & 0 & 0 & 1 & 0 \\
  0 & 0 & 1 & 0 & 0 & 1
\end{array}
\right).
\]
This gives GIT data $\Theta = (K;\LL;D_1,\ldots,D_6;\omega)$ with $K=(\Cstar)^3$, $\LL = \ZZ^3$, $D_1=D_4=(1,0,0)$, $D_2=D_5=(0,1,0)$, $D_3=D_6=(0,0,1)$, and $\omega=(1,1,1)$; the secondary fan here again has a unique maximal cone. The corresponding toric variety is $Y = \PP^1 \times \PP^1 \times \PP^1$. The line bundle $L_1 = \sum_{j \in S_1} D_j$ is $\cO(1,1,1)$, so we see that $f$ is a Laurent polynomial mirror to a hypersurface of type $(1,1,1)$ in $\PP^1 \times \PP^1 \times \PP^1$.

From the second scaffolding we read off $u=0$, $r=2$, $Z = \PP^1 \times \PP^1$, $B=\{1,2\}$, $U = \varnothing$, $S_1 = \{3,4\}$, $S_2 = \{5,6\}$, and 
\[
\cM = \left(
\begin{array}{cc:cccc}
1&0&0&1&1&0 \\
0&1&1&0&0&1 
\end{array}
\right).
\]
This gives GIT data $\Theta = (K;\LL;D_1,\ldots,D_6;\omega)$ with $K=(\Cstar)^2$, $\LL = \ZZ^2$, $D_1=D_4=D_5=(1,0)$, $D_2=D_3=D_6=(0,1)$, and $\omega=(1,1)$; once again the secondary fan has a unique maximal cone. The corresponding toric variety $Y$ is $\PP^2 \times \PP^2$. The line bundles $L_1 = D_3 + D_4$ and $L_2 = D_5 + D_6$ are both equal to $\cO(1,1)$, so we see that $f$ is a Laurent polynomial mirror to the complete intersection of two hypersurfaces defined by bilinear equations in $\PP^2 \times \PP^2$.

\end{eg}

\begin{eg}[{$\mathrm{MM}_\text{3--4}$}]\label{eg:3-4}
  \begin{figure}[ht]
    \centering
    \includegraphics[scale=0.6]{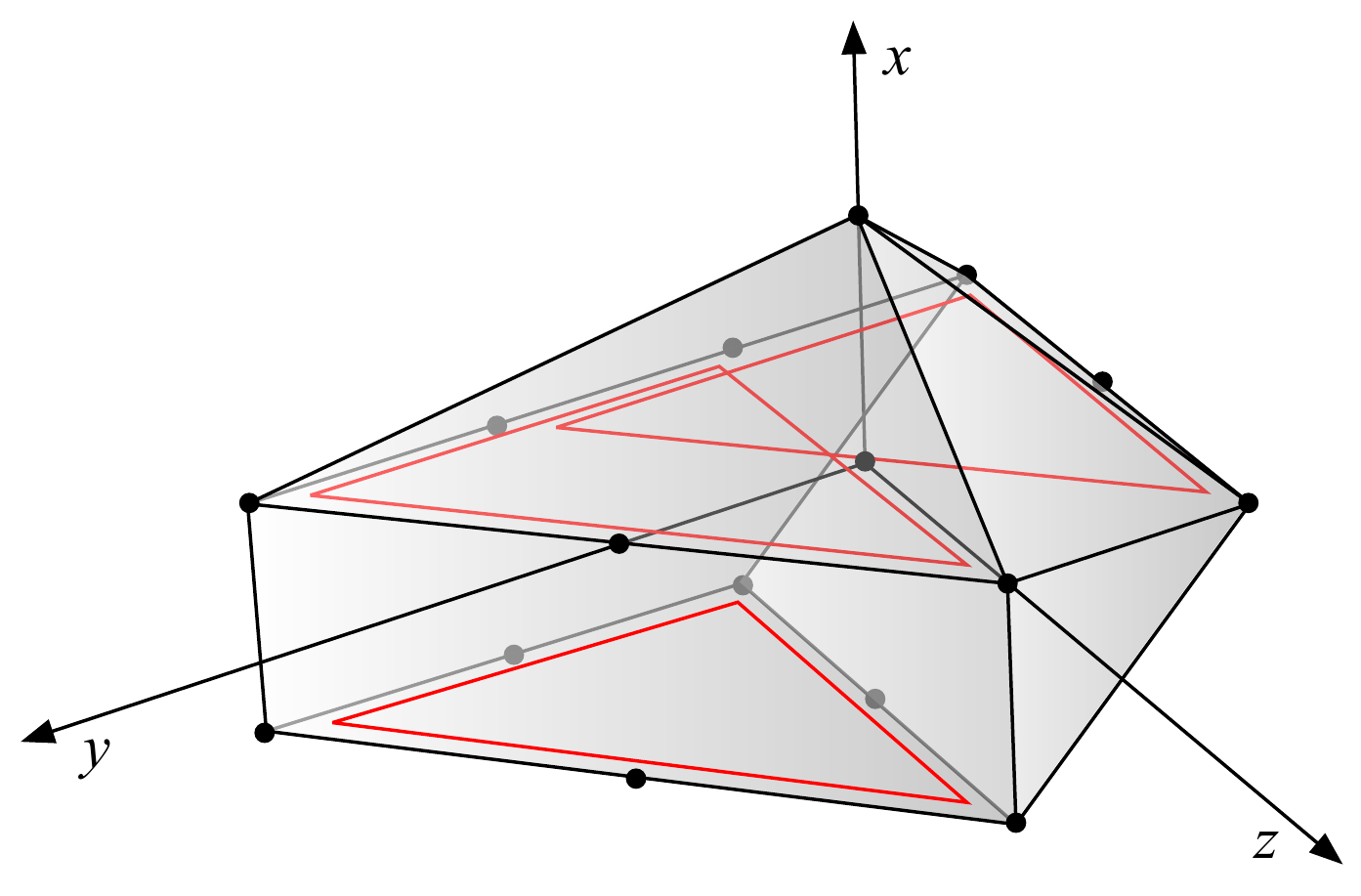}
    \caption{A scaffolding for $\Newt(f)$ in Example~\ref{eg:3-4}.}
\label{fig:3-4}
  \end{figure}
  Consider the rigid maximally-mutable Laurent polynomial
  \[
  f = x + \frac{y^2}{z} + 2 y + \frac{3 y}{z} + z + \frac{3}{z} + \frac{z}{y} + \frac{2}{y} + \frac{1}{y z} + \frac{y^2}{x z} + \frac{2 y}{x} + \frac{2 y}{x z} + \frac{z}{x} + \frac{2}{x} + \frac{1}{x z}.
  \]
  The Newton polytope of $f$ can be scaffolded as in Figure~\ref{fig:3-4}, and there is a corresponding decomposition of $f$:
  \[
  f = x + \frac{(1+y+z)^2}{xz} + \frac{(1+y+z)^2}{z} + \frac{(1+y+z)^2}{yz}
  \]
  From this we read off $u=1$, $r=3$, $Z = \PP^2$, $B=\{1,2,3\}$, $U=\{4\}$, $S_1 = \{5,6,7\}$,  and 
  \[
  \cM = \left(
    \begin{array}{ccc:cccc}
      1 & 0 & 0 & 1 & 1 & 0 & 1 \\
      0 & 1 & 0 & 0 & 1 & 0 & 1\\
      0 & 0 & 1 & 0 & 0 & 1 & 1
    \end{array}
  \right).
  \]
  This gives GIT data $\Theta = (K;\LL;D_1,\ldots,D_6;\omega)$ with $K=(\Cstar)^3$, $\LL = \ZZ^3$, $D_1=D_4=(1,0,0)$, $D_2=(0,1,0)$, $D_3=D_6=(0,0,1)$, $D_4=(1,1,0)$, and $D_7=(1,1,1)$.
  The secondary fan is as shown in Figure~\ref{fig:3-4_secondary_fan}. Choosing $\omega = (3,2,1)$ yields a weak Fano toric manifold $Y_\omega$ such that the line bundle $L_1 = \sum_{j \in S_1} D_j$ is convex. Let $X$ denote the hypersurface in $Y$ defined by a regular section of $L_1$. The class $-K_Y - L_1$ is nef but not ample on $Y$, but it becomes ample on restriction to $X$; thus $X$ is Fano (cf.~\cite[\S 57]{CCGK}). We see that $f$ is a Laurent polynomial mirror to $X$. This example shows that our Laurent inversion technique applies in cases where the ambient space $Y$ is not Fano. In fact $Y$ need not even be weak Fano.

  \begin{figure}[hbtp]
    \centering
    \begin{tikzpicture}[scale=0.5,arr/.style={thin,->,shorten >=2pt,shorten <=2pt,>=stealth}]
      \coordinate [label={below left:\tiny $(0,0,1)$}] (A) at (0, 0);
      \coordinate [label={above left:\tiny $(0,1,0)$}] (B) at (0, 6);
      \coordinate [label={below right:\tiny $(1,0,0)$}] (C) at (6,0);
      \coordinate [label={left:\tiny $(1,1,1)$}] (D) at (2,2);
      \coordinate [label={above right:\tiny $(1,1,0)$}] (E) at (3,3);
      
      \draw [thick] (A) -- (B) -- (E) -- (C) -- (A);
      \draw [thick] (A) -- (D) -- (E);
      \draw [thick] (B) -- (D) -- (C);
      
      \node (L) at (3.2,5.15) {\tiny $L_1$};
      \node (mK) at (5.55,2.2) {\tiny ${-K_Y}$};
      
      \coordinate (K) at (12/5,9/5);
      
      \draw [fill=black] (K) circle (0.1);
      \draw [fill=black] (2,2) circle (0.1);
      
      \draw[arr] (3,5) to [bend right=15] (2,2) ;
      \draw[arr] (5.3,2) to [bend left=5] (K) ;
    \end{tikzpicture}
    \caption{The secondary fan for Example~\ref{eg:3-4}, sliced by the plane $x+y+z=1$.}
\label{fig:3-4_secondary_fan}
  \end{figure}
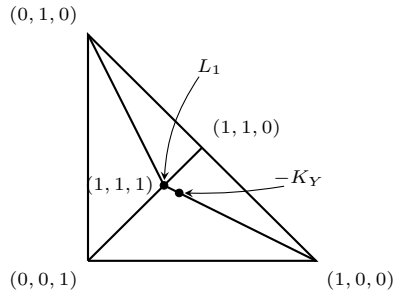
\end{eg}

\section{A New Four-Dimensional Fano Manifold}\label{sec:new_4d}

Consider 
\[
f =  x + y + z + \frac{(1+w)^2}{xzw} + \frac{w}{y}
\]
This is a rigid maximally-mutable Laurent polynomial in four variables. It is presented in scaffolded form, and we read off
$r=2$, $u=3$, $B=\{1,2\}$,  $U=\{3,4,5\}$, $S_1 = \{6,7\}$, and
\[
\cM = \left(
  \begin{array}{cc:ccccc}
    1 & 0  & 1 & 0 & 1 & 1 & 1 \\
    0 & 1  & 0 & 1 & 0 & 1 & -1
  \end{array}
\right).
\]
This yields GIT data $\Theta = (K;\LL;D_1,\ldots,D_6;\omega)$ with $K=(\Cstar)^2$, $\LL = \ZZ^2$, $D_1=D_3=D_5=(1,0)$, $D_2=D_4=(0,1)$, $D_6=(1,1)$, and $D_7=(1,-1)$. We choose the stability condition $\omega = (3,2)$, thus obtaining a Fano toric orbifold $Y$ such that the line bundle $L_1 = D_6+D_7$ on $Y$ is convex. Let $X$ denote the four-dimensional Fano manifold defined inside $Y$ by a regular section of $L_1$. 

The Fano manifold $X$ is new. To see this, we can compute the regularised quantum period $\widehat{G}_X$ of $X$. Since $f$ is a Laurent polynomial mirror to $X$, the regularised quantum period $\widehat{G}_X$ coincides with the classical period of $f$:
\begin{align*}
  \pi_f(t) = \sum_{d=0}^\infty c_d t^d && \text{where} && c_d = \coeff_1 \big(f^d\big).
\end{align*}
This is explained in detail in~\cite{CCGGK,CCGK}. In the case at hand, 
\[
\widehat{G}_X = \pi_f(t) = 1 + 12 t^3 + 120 t^5 + 540 t^6 + 20160 t^8 +  33600 t^9 + \cdots
\]
and we see that $\widehat{G}_X$ is not contained in the list of regularised quantum periods of known four-dimensional Fano manifolds~\cite{CKP,known_4d}. Thus $X$ is new. We did not find $X$ in our systematic search for four-dimensional Fano toric complete intersections~\cite{CKP}, because there we considered only ambient spaces that are Fano toric \emph{manifolds} whereas the ambient space $Y$ here has non-trivial orbifold structure. This is striking because the degree $K_X^4 = 433$ of $X$ is not that low -- compare this with~\cite[Figure~5]{CKP}. In dimensions~two and~three only Fano manifolds of low degree fail to occur as complete intersections in toric manifolds. The space $Y$ can be obtained as the unique non-trivial flip of the projective bundle $\PP\big(\cO(-1) \oplus \cO^{\oplus 3} \oplus \cO(1)\big)$ over $\PP^1$. As was pointed out to us by Casagrande, the other extremal contraction of $Y$, which is small, exhibits $X$ as the blow-up of $\PP^4$ in a plane conic. This suggests that restricting to smooth ambient spaces when searching for Fano toric complete intersections may omit many Fano manifolds with simple classical constructions.

\section{Scaffoldings and Embedded Degenerations of Complete Intersections}\label{sec:embedded_ci}

We next explain how, if $P$ admits a scaffolding for which the shape $Z$ is a product of projective spaces, $X_P$ can be embedded as the common zero locus of a collection of sections of linear systems on $Y$. In this case $X_P$ is a flat degeneration of the zero locus $X$ of a general section. This embedded degeneration is often a smoothing of $X_P$. It was discovered independently by Doran--Harder~\cite{Doran--Harder}: see~\S\ref{sec:torus_charts} for an alternative view on their construction. 

By assumption we have, as in~\S\ref{sec:forward}, an $r \times R$ matrix $\cM = (m_{i,j})$ of the form:
\[
\cM = \left(
\begin{array}{c:ccc}
  \multirow{3}{*}{$\quad I_r\quad$} & m_{1,r+1} & \cdots & m_{1,R} \\
  & \vdots & & \vdots \\
  & m_{r,r+1} & \cdots & m_{r,R} \\
\end{array}
\right)
\]
such that $l_{b,i} := \sum_{j \in S_i} m_{b,j}$ is non-negative for all $b \in [r]$ and $i \in [k]$. The exact sequence~\eqref{eq:fan_sequence} becomes
\[
\xymatrix{
  0 \ar[r] & \ZZ^r \ar[r]^{\cM^T} & \ZZ^R \ar[r]^\rho  & \widetilde{N} \ar[r] & 0
}
\]
and, writing $\rho_i \in \widetilde{N}$ for the image under $\rho$ of the $i$th standard basis vector in $\ZZ^R$, we find that $\{ \rho_i\mid r<i\leq R\}$ is a distinguished basis for $\widetilde{N}$ and that
\begin{align*}
  \rho_i = -\sum_{j=r+1}^R m_{i,j} \rho_j && \text{ for all }1\leq i\leq r.
\end{align*}
Let $\widetilde{M} = \Hom\big(\widetilde{N},\ZZ\big)$ and define $u_j\in \widetilde{M}$, $1\leq j\leq k$, by
\[
u_j(\rho_i)=\begin{cases}
0,&\text{ if }r < i \leq R\text{ and }i\not\in S_j;\\
1,&\text{ if }r < i \leq R\text{ and }i\in S_j.\\
\end{cases}
\]
Let $N':=\widetilde{N}\cap H_{u_1}\cap\ldots\cap H_{u_k}$ be the sublattice of $\widetilde{N}$ given by restricting to the intersection of the hyperplanes $H_{u_i}:=\{v\in N\mid u_i(v)=0\}$. Let $\Sigma'$ denote the fan defined by intersecting $\Sigma$ with $N'_\QQ$, and let $X'$ be the toric variety defined by $\Sigma'$.

\begin{lem}
\label{lem:ci_case}
The lattice $N'$ is the image of $N$ under the map dual to the ray map of $Z$.
\end{lem}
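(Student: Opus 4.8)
```latex
\emph{Proof proposal.} The plan is to identify both $N'$ and the image of $N$ explicitly as sublattices of $\widetilde{N} = \Div_{T_{\bar{M}}}(Z) \oplus N_U$, and then check that the two descriptions coincide. I would begin by recalling from \S\ref{sec:laurent_inversion} that the ray map of $Z$ sends a character $\bar{m} \in \bar{M}$ to the principal divisor it defines, that is, to the tuple $(\langle \rho_\Delta, \bar{m}\rangle)_\Delta$ indexed by the rays $\Delta$ of the fan $\Sigma_Z$. Dualising this ray map gives precisely the map $N = \bar{N} \oplus N_U \to \widetilde{N}$ described just before Theorem~\ref{thm:embedding}: it is the principal-divisor map on the $\bar{N}$ factor and the identity on $N_U$. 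Thus the image of $N$ is the sum of $N_U$ with the image of $\bar{N}$ under ``take the principal divisor,'' and the content of the lemma is that this image equals the hyperplane intersection $N' = \widetilde{N} \cap H_{u_1} \cap \cdots \cap H_{u_k}$.

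The key observation is that each functional $u_j$ is, up to the construction of $\cM$, exactly the functional that sums the divisor coordinates over the rays lying in the $j$th projective-space factor $\PP^{|S_j|-1}$ of $Z$. Since $Z$ is a product of projective spaces, its rays partition into blocks indexed by $[k]$, and the characteristic relation among the rays of $\PP^{a}$ is that their sum is the zero divisor class; equivalently, a torus-invariant divisor is principal if and only if its coordinates sum to zero on each $\PP^{a_j}$ block. I would therefore show the two inclusions. For $N \hookrightarrow N'$: any principal divisor coming from $\bar{M}$ has the property that $\sum_{i \in S_j} \langle \rho_{\Delta_i}, \bar{m}\rangle = \langle \sum_{i \in S_j} \rho_{\Delta_i}, \bar{m}\rangle = 0$, because the rays of each $\PP^{a_j}$ sum to zero; and $N_U$ lies in $\ker u_j$ by the defining case split of $u_j$ (which vanishes on the distinguished basis vectors indexed outside $S_j$, in particular on those spanning $N_U$). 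Hence the image of $N$ is contained in $N'$.

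For the reverse inclusion $N' \subseteq N$, I would argue that $N'$ and the image of $N$ have the same rank and that the image is saturated, so that equality of a finite-index pair forces equality. The rank count is a dimension bookkeeping: $\widetilde{N}$ has rank $z + u$, each $u_j$ cuts down by one (the $u_j$ are linearly independent because the blocks $S_j$ are disjoint and nonempty), so $N'$ has rank $z + u - k$; meanwhile $\bar{N}$ has rank $\dim \bar{M} = z - k$ (the number of rays of $Z$ minus the number of product factors, since each $\PP^{a_j}$ contributes $a_j = |S_j|-1$ to $\dim \bar{M}$ and $\sum_j(|S_j|-1) = z - k$), and the principal-divisor map is injective because the rays of $Z$ span $\bar{M}$ by hypothesis~(iv) of Definition~\ref{dfn:scaffolding}; adding the rank-$u$ lattice $N_U$ gives total rank $(z-k)+u$, matching $N'$. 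Finally, saturation of the image follows because $N'$ is saturated in $\widetilde{N}$ (it is the kernel of an integral map to $\ZZ^k$) and the image is a full-rank sublattice of the saturated lattice $N'$ which I claim fills it; concretely, a divisor whose block-sums all vanish is principal over $\QQ$ and, since the $\rho_\Delta$ generate $\bar{M}$ over $\ZZ$, is principal over $\ZZ$, so it genuinely lies in the image.

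I expect the main obstacle to be the saturation/integrality step in the last paragraph: verifying that the block-sum condition characterises integral principal divisors (not merely rational ones) uses in an essential way the hypothesis that the rays of $Z$ span $\bar{M}$ over $\ZZ$, and one must be careful that ``product of projective spaces'' gives a clean description of $\Pic$ and of the principal divisors. The rank count and the easy inclusion are routine; the care lies in ruling out a proper finite-index subgroup, i.e.\ in confirming that $\widetilde{N} \cap \bigcap_j H_{u_j}$ contains no ``extra'' lattice points beyond the image of $N$.
```
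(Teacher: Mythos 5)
Your proof is correct, and its skeleton is the same as the paper's: prove the easy inclusion (the image of $N$ lies in each hyperplane $H_{u_j}$) and then use a dimension count, the $u_j$ being linearly independent. The difference is in how the two halves are discharged, and your version is actually more complete on one point. The paper identifies the $u_i$ abstractly as a basis of the kernel of the ray map of $Z$, so that by dualising (the ray map sequence splits, $\bar{M}$ being free) the common annihilator of the $u_i$ inside $\widetilde{N}$ is \emph{exactly} the image of $N$ as a lattice; this duality is what silently rules out a finite-index discrepancy, and the written proof never spells it out. You instead identify the $u_j$ concretely as block-sum functionals on the product of projective spaces, and you make the saturation step explicit: rank equality plus one inclusion only gives equality of $\QQ$-spans, and your integrality argument --- an integral divisor that is $\QQ$-principal pairs integrally with all rays, hence with all of $\bar{M}$ since the rays generate $\bar{M}$ over $\ZZ$ (Definition~\ref{dfn:scaffolding}(iv)), hence is $\ZZ$-principal --- closes the gap, and does so in a way that would work for any shape $Z$ whose rays generate $\bar{M}$, not only products of projective spaces. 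Two small wording points: the relation you want among the rays of each $\PP^{a_j}$ is that the ray \emph{generators} sum to zero in $\bar{M}$ (the corresponding divisors sum to the anticanonical class, not the zero class) --- your computation uses the correct fact; and the saturatedness of $N'$ in $\widetilde{N}$ by itself does not help (it is the direct ``block-sums zero implies integrally principal'' argument that does the work), so that clause could simply be dropped.
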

\begin{proof}
The lattice $N'$ is defined as the vanishing of a collection of elements of the dual lattice $\widetilde{M}$. Since these intersect transversely we have that $\dim N' = \dim N$. To check that $N \subset N'$ we check that each $u_i$ vanishes on $N$. But the vectors $u_i$ form a basis of the kernel of the ray map of $Z$ dual to the inclusion of $\bar{N} \hookrightarrow \Div_{T_{\bar{M}}}(Z)$.
\end{proof}

\noindent Thus $X' = X_P$, and we have embedded $X_P$ in $Y$ as the common zero locus of sections of linear systems defined by the hyperplanes $H_{u_i}$.

\section{Beyond Complete Intersections}\label{sec:beyond_ci}

Any Laurent polynomial obtained from the Givental/Hori--Vafa model gives a scaffolding with shape $Z$ equal to a product of projective spaces (Lemma~\ref{lem:scaffolding_from_poly}) but the definition of scaffolding allows for much more general choices of $Z$. We now show how certain classical constructions appear via scaffolding. For example, for any reflexive polytope $P$ there is a distinguished choice of scaffolding $\Scan$ with shape $Z$ given by a toric crepant terminal $\QQ$-factorialisation of the toric variety defined by the normal fan of $P$, and a single strut covering all of $P$.

\begin{pro}\label{pro:anticanonical}
The embedding $X_P \hookrightarrow Y \cong \PP^{\rho-1}$ determined by the scaffolding $\Scan$ of $P$ is the anticanonical embedding of $X_P$, where $\rho$ is the number of integral points of $P^*$.
\end{pro}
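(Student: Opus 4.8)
The plan is to unwind both sides of the claimed identity explicitly and check they agree. For the scaffolding $\Scan$, the shape $Z$ is a crepant terminal $\QQ$-factorialisation of the toric variety attached to the normal fan of $P$, and there is a single strut $s = (D, 0)$ whose associated polytope of sections covers all of $P$; by the defining property of a scaffolding we have $P = P_D$. Since $\Scan$ has a single strut and no uneliminated variables, in Algorithm~\ref{alg:laurent_inversion} we have $r = 1$, $u = 0$, and $R = 1 + z$ where $z$ is the number of rays of $Z$. The weight matrix $\cM$ is therefore a single row: a leading $1$ (from step~(i)), followed by the heights $m_{1, 1+j}$ determined in step~(iii) by $D = \sum_j m_{1,1+j} \Delta_j$. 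With $r=1$ the torus $K = \Cstar$, and the resulting $Y_\omega$ is a weighted projective space or, in the relevant case, $\PP^{R-1} = \PP^{z}$; I would argue that the stability condition $\omega$ (the sum of the first $|S|=1$ columns) together with the fact that the rays of $Z$ span $\bar{M}$ forces all weights to be $1$, so that $Y \cong \PP^{z}$.

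The first key step is to identify $z$, the number of rays of $Z$, with $\rho - 1$, where $\rho = \#(P^* \cap M)$ is the number of integral points of the dual polytope. Because $Z$ is a \emph{terminal} $\QQ$-factorialisation of the toric variety of the normal fan of $P$, its rays are precisely the primitive generators of the rays of that normal fan, which correspond bijectively to the vertices of $P^*$ together with any additional terminal generators; the crepant-terminal condition ensures these are exactly the non-origin lattice points of $P^*$. Thus $z = \rho - 1$ and $Y \cong \PP^{z} = \PP^{\rho - 1}$, matching the statement. I would verify the bijection between the rays of $Z$ and the points of $P^* \cap M \setminus \{0\}$ carefully, since this is where the specific geometry of the anticanonical construction enters.

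The second key step is to identify the embedding $X_P \hookrightarrow \PP^{\rho-1}$ with the anticanonical one. The anticanonical embedding of $X_P$ is by the linear system of sections of $-K_{X_P}$, and for a Fano toric variety the space of such sections has a basis indexed by the lattice points of the polar dual $P^*$; equivalently, the anticanonical map sends a torus point to its image under the monomials given by $P^* \cap M$. I would show that the torus map underlying Theorem~\ref{thm:embedding} (the map $T_N \to T_{\widetilde{N}}$ described after that theorem, here with $N_U = 0$ so it is $\bar{N} \to \Div_{T_{\bar{M}}}(Z)$ sending characters to principal divisors) is exactly the map whose coordinates are the monomials indexed by the rays of $Z = P^*$-points. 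Taking the closure in $\PP^{\rho-1}$ then realises $X_P$ as the projective toric variety of $P^*$, which is precisely $X_P$ under its anticanonical polarisation since $P$ is reflexive and $(P^*)^* = P$.

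The main obstacle I anticipate is the second step: cleanly matching the toric embedding coming from Algorithm~\ref{alg:laurent_inversion} and Theorem~\ref{thm:embedding} with the classical anticanonical embedding, including checking that the polarisation induced by the hyperplane class of $\PP^{\rho-1}$ pulls back to $-K_{X_P}$ rather than some other multiple or twist. This requires tracking the divisor $D$ on $Z$ and the heights in $\cM$ through the ray map to confirm that the pullback of $\cO(1)$ is exactly the anticanonical class; the convexity/nefness bookkeeping from the dual perspective in~\S\ref{sec:dual_perspective} should make this precise, but it is the delicate point. Everything else reduces to the combinatorics of reflexive polytopes and the standard correspondence between lattice points of $P^*$ and anticanonical sections.
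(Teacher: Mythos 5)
Your overall strategy coincides with the paper's: identify the rays of $Z$ with the non-zero lattice points of $P^*$ (so the number of rays is $\rho-1$), conclude $Y \cong \PP^{\rho-1}$, and identify the torus map underlying Theorem~\ref{thm:embedding} with the monomial map given by the characters corresponding to those rays, whose closure is the anticanonically embedded $X_P$. The paper's proof is exactly this, compressed into three sentences.

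There is, however, one step whose justification as you state it would fail, and it is precisely the point you yourself flag as delicate. You claim that ``the stability condition $\omega$ \dots together with the fact that the rays of $Z$ span $\bar{M}$ forces all weights to be $1$.'' Neither fact is relevant: in Algorithm~\ref{alg:laurent_inversion} the stability condition is computed \emph{from} the weight matrix, not the other way round, and the spanning condition holds for every admissible shape, including ones that would give genuinely weighted projective spaces. The missing idea --- and the actual content of the paper's phrase ``follows from the definition of polar polytope'' --- is that the unique nef divisor $D$ on $Z$ with $P_D = P$ is the toric boundary $-K_Z = \sum_j \Delta_j$. Indeed, for a nef divisor the coefficients are recovered from the polytope of sections as $a_j = -\min_{n \in P}\langle \rho_j, n\rangle$, and since every ray generator $\rho_j$ of $Z$ is a boundary lattice point of $P^*$, polar duality gives $\min_{n \in P}\langle \rho_j, n\rangle = -1$, whence $a_j = 1$ for all $j$. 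Once this is in place, every entry of $\cM$ equals $1$, so $Y \cong \PP^{\rho-1}$ on the nose; moreover the embedding becomes $t \mapsto \big[\,1 : \chi^{\rho_1}(t) : \cdots : \chi^{\rho_{\rho-1}}(t)\,\big]$, which is the monomial map indexed by \emph{all} $\rho$ lattice points of $P^*$ (the strut coordinate supplying the constant section, i.e.\ the origin of $P^*$). This is by definition the anticanonical map of $X_P$, so the pullback of $\cO(1)$ is $-K_{X_P}$ automatically and the ``bookkeeping'' you worry about in your final paragraph evaporates.
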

\begin{proof}
That $Y \cong \PP^{\rho-1}$ follows from the definition of polar polytope: the nef divisor of $Z$ used to cover $P$ as a single strut is precisely the toric boundary of $Z$. Indeed every torus invariant section of $-K_{X_P}$ defines a character of $T_N$ which in turn generates a ray of $Z$. The map of tori $T_N \hookrightarrow \CC^\star{}^\rho$ defining the embedding in Theorem~\ref{thm:embedding} is precisely the map of tori defined by these characters of $T_N$.
\end{proof}

\begin{eg}[$dP_7$]
\begin{figure}[hbt]
\includegraphics[angle = 90]{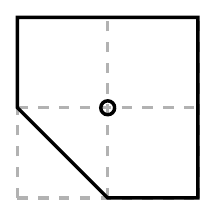}
\caption{Polygon for $dP_7$}
\label{fig:dP7}
\end{figure}
Let $P$ be the polytope shown in Figure~\ref{fig:dP7} and let $Z$ be the toric variety associated to the normal fan of $P$, that is, the blow up of $\PP^2$ in two points. The image of the anticanonical embedding of $X_P$ is the closure in the projective space $\PP^5$ of the variety $X_0$ defined via the following five equations in $\CC^5$:
\begin{align*}
x_1x_3 = 1, &&
x_2x_4 = x_3, &&
x_3x_5 = x_4, &&
x_4x_1 = x_5, &&
x_5x_2 = 1.
\end{align*}
The variety $X_0$ admits a flat deformation to the variety $X_t$ defined by the $4\times 4$ Pfaffians of the following skew-symmetric matrix:
\begin{equation}
\begin{pmatrix}
\phantom{0} & 1 & x_1 & x_2 & 1 \\
& & t & x_3 & x_4 \\
& & & 1 & x_5 \\
& & & & t \\
& & & & \\
\end{pmatrix}
\label{eq:Pfaffians}
\end{equation}
\end{eg}

Scaffoldings of a Fano polygon $P$ using this shape $Z$ produce ambient toric varieties $Y$ which exhibit $X_P$ as the closure in $Y$ of the affine variety defined by these five binomial equations, homogenising each equation to an equation in Cox co-ordinates. In forthcoming work we will show that the existence of the flat deformation of $X_P$ in $Y$ given by these Pfaffians exists if and only if the following `mutability condition' holds. 

\begin{pro}
\label{prop:mutable_pfaffians}
Given a scaffolding $S$ of $X_P$ with shape $Z$, $X_P$ deforms in the ambient space $Y$ to a variety defined by the homogenisation of the $4\times 4$ Pfaffians of~\eqref{eq:Pfaffians} if and only if each strut in $S$, regarded as a polyhedron in $N$, admits mutations, in the sense of~\cite{ACGK}, with weight vectors equal to the elements $x_1$,~$x_2$ (regarded as elements of the dual lattice $M$).
\end{pro}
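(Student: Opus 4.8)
The plan is to reduce the deformation statement to a combinatorial condition on each strut, by analysing the toric geometry of the embedding $X_P \hookrightarrow Y$ produced by the scaffolding $S$ and matching it against the homogenised Pfaffian equations. The key observation is that Theorem~\ref{thm:embedding} realises $X_P$ as the closure of the affine variety cut out by the five binomial equations displayed before the proposition, written in Cox coordinates on $Y$. The weight vectors $x_1$, $x_2 \in M$ play the role of the two mutation directions, and the Pfaffian matrix~\eqref{eq:Pfaffians} has a very specific combinatorial pattern: the entries are $1$s, $t$s, and the coordinate functions $x_1,\ldots,x_5$ arranged so that the $4\times 4$ Pfaffians at $t=0$ recover exactly the five binomials. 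So the first step is to make precise, via the dual perspective of~\S\ref{sec:dual_perspective}, how a mutation of a strut (in the sense of~\cite{ACGK}) with weight vector $x_i$ corresponds to a one-parameter family of deformations of the corresponding binomial, and to check that deforming simultaneously in the directions $x_1$ and $x_2$ produces precisely the Pfaffian pencil.

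\emph{Forward direction.} First I would assume each strut admits mutations with weight vectors $x_1$ and $x_2$. A mutation of a polytope with weight vector $w \in M$ and factor $F$ replaces a strut $P_D + \chi$ by a combinatorially shifted polytope; at the level of the defining binomial in Cox coordinates this shift corresponds to adding a monomial multiple of the mutation parameter $t$. I would show that the existence of a mutation in direction $x_1$ guarantees that the binomial $x_i x_{i+1} = x_{i+2}$ (indices read cyclically, as in the $dP_7$ example) can be deformed to $x_i x_{i+1} = x_{i+2} + t\,(\cdots)$ in a way compatible with the grading on $Y$, and similarly for $x_2$. Assembling these deformed binomials, I would identify them as the $4\times 4$ Pfaffians of~\eqref{eq:Pfaffians}, using the standard fact that the Pfaffians of a $5\times 5$ skew matrix give five quadratic (here, after homogenisation, weighted) relations with exactly this cyclic structure. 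Flatness of the resulting family follows because the Pfaffian locus of a generic skew $5\times 5$ matrix has the expected codimension~$3$, matching the codimension of $X_P$, so the Hilbert polynomial is constant along the family.

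\emph{Converse direction.} Conversely, given that $X_P$ deforms inside $Y$ to the homogenised Pfaffian variety, I would read off the deformation at first order: the leading $t$-terms appearing in the Pfaffians must be sections of the appropriate line bundles on $Y$, and translating this back through the ray map $N \to \widetilde N$ of Theorem~\ref{thm:embedding} forces each strut to admit a lattice shift in precisely the directions $x_1$, $x_2$, i.e.\ a mutation with those weight vectors. The point is that the $t$-entries sit in the two rows/columns of~\eqref{eq:Pfaffians} governing the directions $x_1$ and $x_2$, so the deformation data decouples strut-by-strut into two mutation parameters.

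\emph{The hard part} will be pinning down the exact dictionary between the combinatorial mutation of a strut and the $t$-deformation of its binomial in Cox coordinates, and verifying that the global Pfaffian structure is equivalent to the \emph{simultaneous} local mutability of \emph{every} strut rather than some weaker or stronger condition. In particular one must check that no cross-terms between distinct struts obstruct flatness, and that the cyclic Pfaffian pattern is forced (not merely permitted) by the two-direction mutability — this is where the specific geometry of the shape $Z$ enters, and where I expect the proof in the forthcoming work to do most of its real work. Since the proposition is stated as a pointer to forthcoming work, a complete argument would require developing the mutation-of-struts formalism in full; the sketch above is the natural skeleton on which that formalism would hang.
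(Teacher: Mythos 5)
First, a point of order: the paper does not actually prove this proposition — it is explicitly deferred to forthcoming work — so there is no internal proof to measure your sketch against. The only guidance the paper offers is the sentence immediately following the statement, which names the real mechanism: mutability is what allows the entries on the superdiagonal and in the upper-right corner of~\eqref{eq:Pfaffians} (the $1$s and $t$s) to be replaced by monomials in the Cox coordinates of $Y$ with \emph{non-negative} exponents, as in~\eqref{eq:homo_Pfaffians}. Your skeleton gets several things right — the $t=0$ Pfaffians do recover the five binomials, Theorem~\ref{thm:embedding} does present $X_P$ as the closure of the binomial variety, and flatness does come essentially for free from the codimension-$3$ Pfaffian (Buchsbaum--Eisenbud) format — but it has a genuine gap exactly where the content lies: you never pin down what mutability of a strut buys you, nor what its failure obstructs, and both directions of your argument rest on an unproven ``dictionary''.

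Concretely, the missing step is the following. The five $x$-entries of~\eqref{eq:Pfaffians} sit at positions forming a $5$-cycle, so requiring the matrix to be in homogeneous Pfaffian format ($\deg a_{jk} = q_j + q_k$) determines the weights $q_1,\dots,q_5 \in \LL^*\otimes\QQ$ \emph{uniquely} from the degrees of $x_1,\dots,x_5$, which are the columns of the weight matrix, i.e.\ row $i$ records the coefficients of the $i$th strut's divisor $D_i$ on the rays of $Z$. Hence the degrees of the five constant entries are forced; since each $y_i$ has degree the $i$th standard basis vector of $\LL^*$, the required pure-$y$ monomial exists in the Cox ring if and only if each forced degree $q_j+q_k$ is a componentwise non-negative integer vector. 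These inequalities decouple row by row, i.e.\ strut by strut, and the proposition's content is precisely that, for each strut, they are equivalent to the condition of~\cite{ACGK} that every slice of the strut at negative height with respect to $x_1$ (respectively $x_2$) admits the required Minkowski summand. Your forward direction (``mutation lets you add a $t$-multiple to each binomial'') and your converse (``the $t$-terms force a lattice shift'') both gesture at this equivalence without proving it; moreover, the difficulty you flag — cross-terms between struts obstructing flatness — is not where the problem lies, since once the entries exist in the Cox ring the Pfaffian format handles flatness, and the genuine issue is whether the forced entries exist at all. Until the equivalence of the non-negativity inequalities with mutability is established in both directions, the sketch does not yet contain a proof.
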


\noindent This condition ensures that we can homogenise the Pfaffian equations, replacing the entries on the superdiagonal and in the upper-right corner of~\eqref{eq:Pfaffians} with monomials in Cox co-ordinates with \emph{non-negative} exponents.

It follows that $Y$ contains five toric degenerations of the variety defined by these Pfaffians, since cyclically permuting the positions of the variables $x_1, \ldots, x_5$ shown in the matrix~\eqref{eq:Pfaffians} gives rise to five distinct toric degenerations.

\section{Models of Orbifold del~Pezzo Surfaces}
\label{sec:new_third_one_one}

Scaffolding has a practical advantage even in the surface case. In this section we show how to find models of del~Pezzo surfaces with $1/3(1,1)$ singularities that were used by Corti--Heuberger in their classification~\cite{CH}. Two of these models are toric complete intersections; the third is a degeneracy locus cut out by Pfaffian equations. The Fano polygons we use for these models were classified in~\cite{KNP15}. Following~\cite{CH,A+,KNP15} we refer to the del~Pezzo surface with $n \times 1/3(1,1)$ singular points and degree $d$ as $X_{n,d}$.

\begin{eg}[$X_{2,5/3}$]
Consider the Fano polygon $P$ with scaffolding shown in Figure~\ref{fig:X253}. This scaffolding defines the weight matrix:
\[
  \begin{array}{ccccc}
  y_1 & y_2 & x_1 & x_2 & x_3 \\ \midrule
    1 & 0 & 2 & 1 & 1  \\
    0 & 1 & 1 & 2 & -1
  \end{array}
\]
Fixing the stability condition $\omega = (1,1)$ defines a toric variety $Y$. The toric variety $X_P$ is a hypersurface in $Y$ defined by the vanishing of the binomial section $y_1^4y_2^2 - x_1x_2x_3$ of the bundle $L = (4,2)$. A general section of $L$ is a del~Pezzo surface with $2 \times 1/3(1,1)$ singularities and no other singular points.
\begin{figure}[hbtp]
 \includegraphics{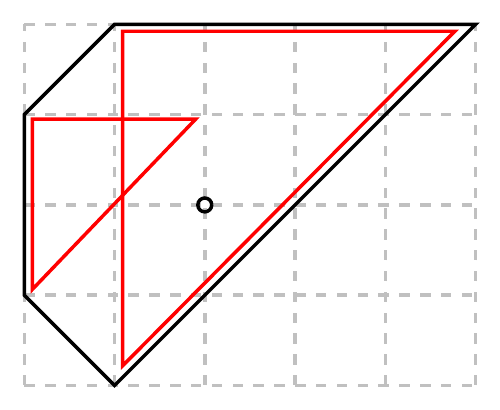}
\caption{Polygon for a degeneration of the surface $X_{2,5/3}$}
\label{fig:X253}
\end{figure}
\end{eg}

\begin{eg}[$X_{3,1}$]
Now consider the Fano polygon $P$ with the scaffolding shown in Figure~\ref{fig:X31}. This scaffolding defines the weight matrix:
\[
  \begin{array}{cccccc}
  y_1 & y_2 & y_3 & x_1 & x_2 & x_3 \\
\midrule
    1 & 0 & 0 & 2 & 1 & 1 \\
    0 & 1 & 0 & 1 & 2 & 1 \\
    0 & 0 & 1 & 1 & 1 & 2 \\
  \end{array}
\]
Fixing the stability condition $\omega = (1,1,1)$ defines a toric variety $Y$. The toric variety $X_P$ is a hypersurface in $Y$ defined by the vanishing of the binomial section $y_1^4y_2^4y_3^4 - x_1x_2x_3$ of the bundle $L = (4,4,4)$. A general section of $L$ is a del~Pezzo surface with $3 \times 1/3(1,1)$ singularities and no other singular points.

\begin{figure}[htpb]
\includegraphics{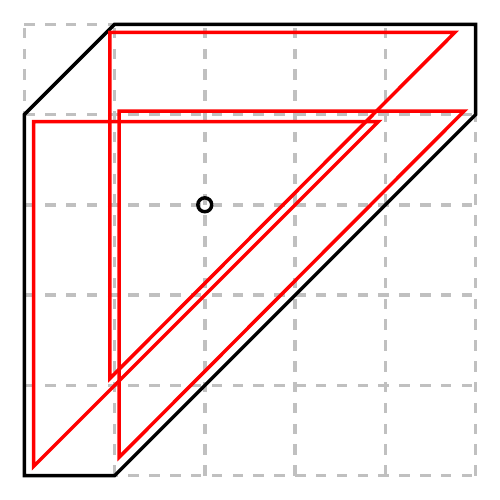}
\caption{Polygon for a degeneration of the surface $X_{3,1}$}
\label{fig:X31}
\end{figure}
\end{eg}

\begin{eg}[$X_{5,5/3}$]\label{ex:Pf}
The surface $X_{5,5/3}$ in~\cite{CH} is found as a degeneracy locus defined by five $4 \times 4$ Pfaffian equations. We show how this appears as an instance of Laurent inversion.

\begin{figure}
\includegraphics{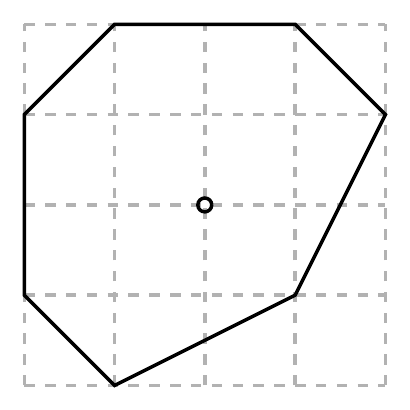}
\caption{Polygon for a degeneration of the surface $X_{5,5/3}$}
\label{fig:X553}
\end{figure}

\begin{figure}
\includegraphics{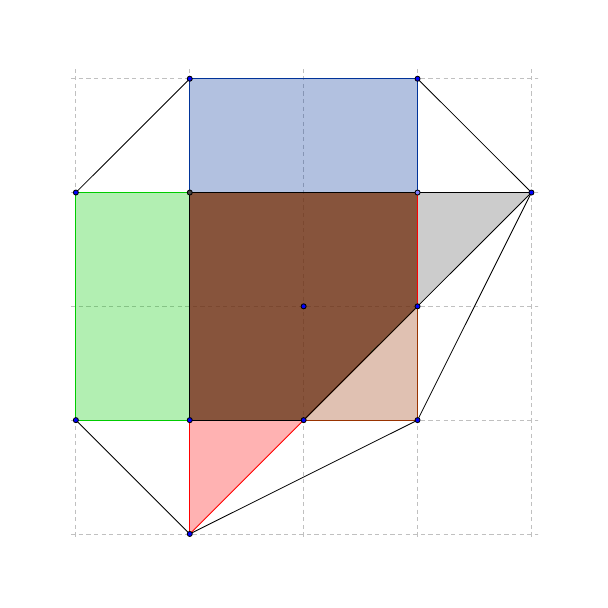}
\caption{}
\label{fig:scaff}
\end{figure}

Consider the polygon $P$ shown in Figure~\ref{fig:X553} and let $Z$ be the toric variety with fan given by the normal fan of the polygon in Figure~\ref{fig:dP7}. Figure~\ref{fig:scaff} exhibits a scaffolding of $P$ with 5 struts and shape $Z$.
To write out the corresponding weight matrix $\cM$ we first have a $5 \times 5$ identity block, identifying each row with a strut; the remaining columns are found by expanding the nef divisors making up the scaffolding in the basis of torus invariant divisors. In this way we obtain:
\[
\begin{matrix}
y_1 & y_2 & y_3 & y_4 & y_5 & x_1 & x_2 & x_3 & x_4 & x_5 \\
\midrule
1 & 0 & 0 & 0 & 0 & 2 & 1 & 1 & 1 & 1 \\
0 & 1 & 0 & 0 & 0 & 1 & 2 & 1 & 1 & 1 \\
0 & 0 & 1 & 0 & 0 & 1 & 1 & 2 & 1 & 1 \\
0 & 0 & 0 & 1 & 0 & 1 & 1 & 1 & 2 & 1 \\
0 & 0 & 0 & 0 & 1 & 1 & 1 & 1 & 1 & 2
\end{matrix}.
\]
This scaffolding satisfies the mutability condition in Proposition~\ref{prop:mutable_pfaffians}. Taking stability condition $(1,\ldots,1)$ and homogenising the Pfaffian equations given in~\eqref{eq:Pfaffians} we obtain a flat deformation of $X_P$ given by the $4 \times 4$ Pfaffians of the skew-symmetric matrix:
\begin{equation}
\begin{pmatrix}
\phantom{0} & y^2_1y_2y_3y^2_4 & x_1 & x_2 & y_1y^2_2y^2_4y_5 \\
& & y^2_1y^2_3y_4y_5 & x_3 & x_4 \\
& & & y^2_1y^2_2y_3y_5 & x_5 \\
& & & & y^2_2y_3y_4y^2_5 \\
& & & & \\
\end{pmatrix}
\label{eq:homo_Pfaffians}
\end{equation}
Hence we realise the surface $X_{5,5/3}$ as a degeneracy locus in a rank 5 toric variety $Y$. In this example all five toric degenerations of the surface are isomorphic. This is not typical, but a consequence of the symmetries of the Fano polygon $P$.
\end{eg}

\section{Nef Partitions}
\label{sec:nef_partitions}

We now consider the connection between Laurent inversion and the \emph{nef partitions} studied by Batyrev and Borisov~\cite{B93,BB96}. We begin with a motivating example. The notion of mutation of polytopes~\cite{ACGK} extends naturally to scaffoldings, and we illustrate this by mutating one of the scaffoldings considered in Example~\ref{ex:dP6}. 

\begin{figure}[htpb]
  \centering
  \begin{tikzpicture}[transform shape]
    \begin{scope}
      \clip (-1.3,-1.3) rectangle (1.3cm,1.3cm); 
      \draw[line width=0.5mm, black] (-1,1) -- (0,1) -- (1,0) -- (1,-1) -- (0,-1) -- (-1,0) -- (-1,1); 
      \draw[line width=0.5mm, red] (-0.9,0.9) -- (-0.1,0.9) -- (-0.1,0.1) -- (-0.9,0.1) -- (-0.9,0.9) -- (-0.1,0.9); 
      \draw[line width=0.5mm, red] (0.1,-0.1) -- (0.9,-0.1) -- (0.9,-0.9) -- (0.1,-0.9) -- (0.1,-0.1) -- (0.9,-0.1); 
      \foreach \x in {-7,-6,...,7}{                        
        \foreach \y in {-7,-6,...,7}{                      
          \node[draw,shape = circle,inner sep=1pt,fill] at (\x,\y) {}; 
        }
      }
      \node[draw,shape = circle,inner sep=4pt] at (0,0) {}; 
    \end{scope}
  \end{tikzpicture}
  \qquad \qquad \qquad
  \begin{tikzpicture}[transform shape]
    \begin{scope}
      \clip (-1.3,-1.3) rectangle (1.3cm,1.3cm); 
      \draw[line width=0.5mm, black] (-1,1) -- (1,1) -- (1,0) -- (0,-1) -- (-1,0) -- (-1,1); 
      \draw[line width=0.5mm, red] (-0.9,0.9) -- (0.9,0.9) -- (0.1,0.1) -- (-0.9,0.1) -- (-0.9,0.9) -- (0.9,0.9); 
      \draw[line width=0.5mm, red] (0,-0.1) -- (0.8,-0.1) -- (0,-0.9) -- (0,-0.1) -- (0.8,-0.1); 
      \foreach \x in {-7,-6,...,7}{                        
        \foreach \y in {-7,-6,...,7}{                      
          \node[draw,shape = circle,inner sep=1pt,fill] at (\x,\y) {}; 
        }
      }
      \node[draw,shape = circle,inner sep=4pt] at (0,0) {}; 
    \end{scope}
  \end{tikzpicture}
  
  \caption{Mutating a scaffolding}
\label{fig:mutating_scaffolding}
\end{figure}
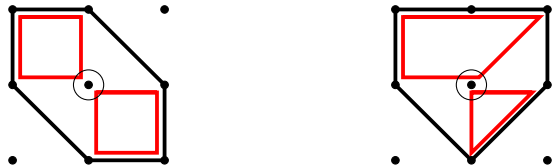

\begin{eg}
\label{ex:dP6_mutated}

The mutation that takes the left-hand polygon in Figure~\ref{fig:mutating_scaffolding} (previously seen in Example~\ref{ex:dP6}) to the right-hand polygon transforms the scaffolding as shown.
In Example~\ref{eg:dP6_before} we analysed the dual picture of the left-hand scaffolding in Figure~\ref{fig:mutating_scaffolding}, obtaining Figure~\ref{fig:dP6_before_again} (which is a copy of Figure~\ref{fig:dP6_before}). Repeating this analysis for the right-hand scaffolding in Figure~\ref{fig:mutating_scaffolding}

\begin{figure}[htb!]
  \centering
  \includegraphics[width=\textwidth]{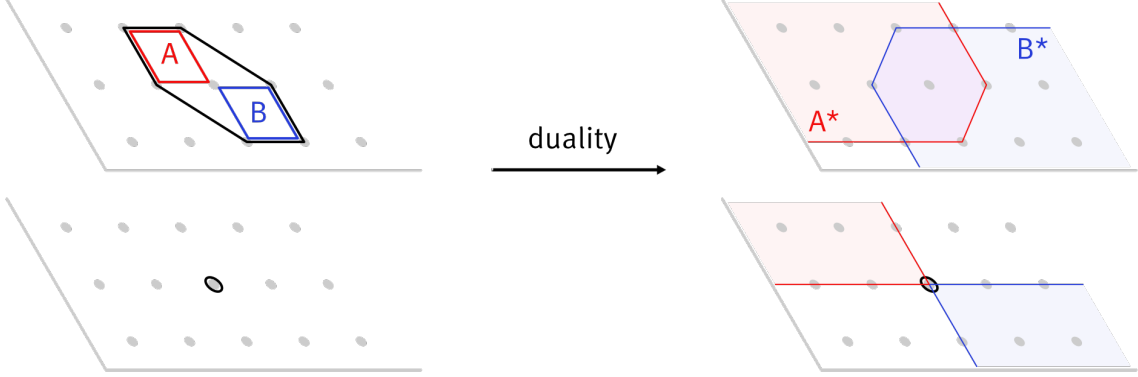}
  \caption{The dual picture of  the left-hand scaffolding in Figure~\ref{fig:mutating_scaffolding}.}
\label{fig:dP6_before_again}
\end{figure}

\begin{figure}[htb!]
  \centering
  \includegraphics[width=\textwidth]{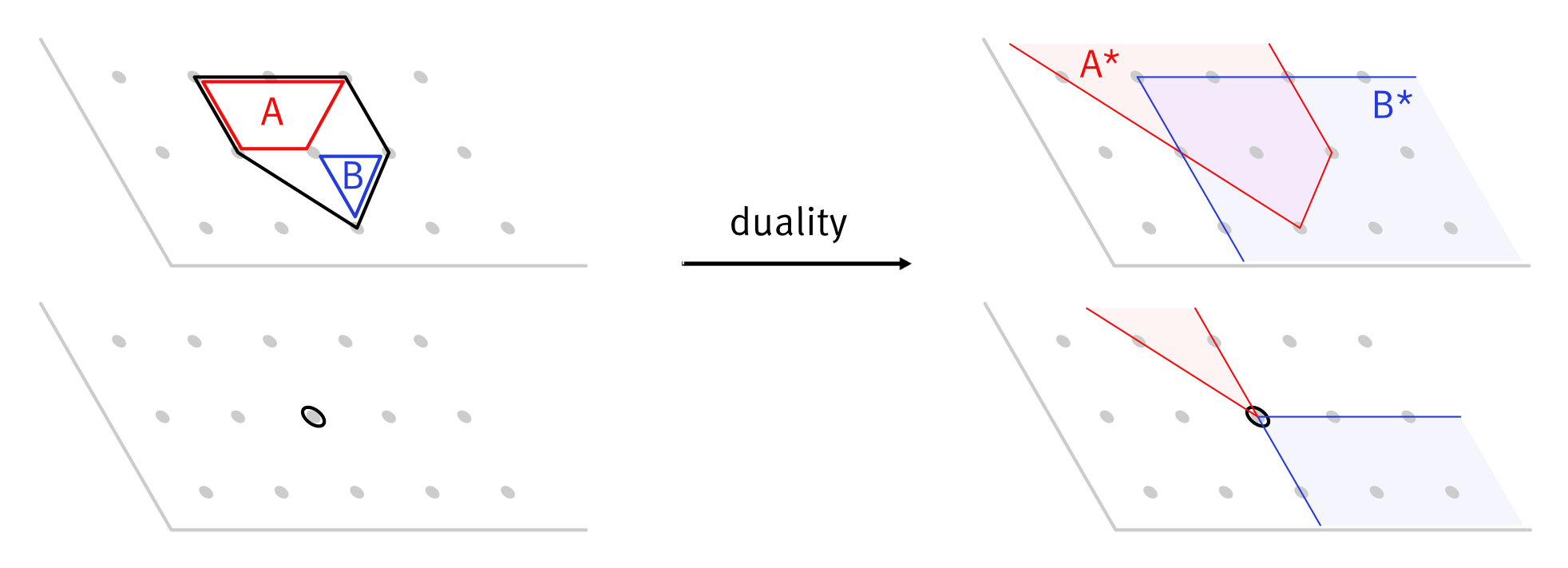}
  \caption{The dual picture of  the right-hand scaffolding in Figure~\ref{fig:mutating_scaffolding}.}
\label{fig:dP6_after}
\end{figure}

\noindent yields Figure~\ref{fig:dP6_after}. As before, on the left-hand side of Figure~\ref{fig:dP6_after} the scaffolding is placed at height~$1$ in $N_\QQ \oplus \QQ$, with the struts labelled as $A$ and~$B$. 
The corresponding cones $C_A$ and $C_B$ in $M_\QQ\oplus \QQ$ are shown on the right-hand side of Figure~\ref{fig:dP6_after}: $C_A$ is the cone over the dual polyhedron $A^*$, placed at height~$1$ in $M_\QQ\oplus \QQ$, and similarly for $C_B$. The tail cones $T_{A^*}$ of $A^*$ and $T_{B^*}$ of $B^*$ are shown at height zero: these are faces of $C_A$ and $C_B$ respectively. The shape $Z$ can be found by projecting the facets of $C_A$ and $C_B$ onto the height-zero slice in $M_\QQ \oplus \QQ$, where we see the fan of the Hirzebruch surface $Z = \FF_1$. 

Mutation here is the piecewise-linear transformation of $M_\QQ \oplus \QQ$ given by
\begin{equation}
\label{eq:mutation_PL}
  (x,y,z) \mapsto 
  \begin{cases}
    (x,y-x,z),&\text{ if }x < 0;\\
    (x,y,z),&\text{ if }x \geq 0.
  \end{cases}
\end{equation}
This maps the right-hand side of Figure~\ref{fig:dP6_before_again} to the right-hand side of Figure~\ref{fig:dP6_after}. One could also apply the definition of $N$-side mutation from~\cite{ACGK} directly to the struts in the left-hand side of Figure~\ref{fig:dP6_before_again}; note that in loc.~cit.~the polytope being mutated is not required to be Fano, or even to contain the origin in its interior. This yields the struts shown in the left-hand side of Figure~\ref{fig:dP6_after}.

Since in this example the shape $Z = \PP^1 \times \PP^1$ is a toric surface there is an alternative, and more geometric, description of its mutations which makes contact with the work of Gross--Hacking--Keel~\cite{GHK2}. A mutation of such a $Z$ is given by fixing a morphism $\pi \colon Z \rightarrow \PP^1$ and making an \emph{elementary transformation}\footnote{That is, blow up a point on one of the two torus invariant sections and contract the strict transform of the fibre containing this point.} of this $\PP^1$ bundle. In this case the mutation takes $Z$ to the Hirzebruch surface $\FF_1$. In general the fan determined by $Z$ undergoes a piecewise linear transformation $T$ which fixes the rays corresponding to the torus invariant sections of $\pi$. In this case $T$ is the restriction of \eqref{eq:mutation_PL} to the height-zero slice $z=0$.
\end{eg}

Turning now to nef partitions, we first extend the definition of nef partition to the setting of Fano toric complete intersections and then show that scaffolding offers a substantial generalisation of this new notion. We begin by recalling the basic definition and main results~\cite{B93,BN08}.

\begin{dfn}
Given a lattice $N$ and a reflexive polytope $\Delta \subset N_\QQ$, a \emph{nef partition of length~$r$} is a partition $E_1 \cup \ldots \cup E_r$ of the set $\V{\Delta}$ of vertices of $\Delta$ such that there are $\Sigma[\Delta]$-piecewise linear functions $\phi_i$ satisfying $\phi_i(v) = 1$ if $v \in E_i$ and $\phi_i(v) = 0$ otherwise. We write $\phi := \phi_1 + \ldots + \phi_r$.
\end{dfn}

\noindent A nef partition defines a set of nef divisors $D_i = \sum_{\rho \in E_i}{D_\rho}$ on $X_\Delta$ such that $\sum_{i=1}^{r}{D_i} = -K_{X_\Delta}$; thus a general section of the bundle $\bigoplus_{i=1}^{r}\cO(D_i)$ is a Calabi--Yau variety.

From the dual perspective, a nef partition is a Minkowski decomposition
\[
\Delta^* = \nabla_1 + \ldots + \nabla_r
\]
where the polytopes $\nabla_i$ are the polyhedra of sections of the line bundles $\cO(D_i)$, together with points $p_i \in \nabla_i$ for each $1\leq i\leq r$ such that $\sum_i{p_i} = 0$. The points $p_i$ themselves may be interpreted as the torus invariant divisors $D_i$, which determine unique sections of the bundles $\cO(D_i)$. More explicitly, the polytopes $\nabla_i$ are 
\begin{align*}
  \nabla_i := \{n \in N_\QQ \mid \text{$\langle n , m\rangle \geq \phi_i(m)$ for any  $m \in M_\QQ$}\}, && 1\leq i\leq r.
\end{align*}

In the case of a \emph{Fano} complete intersection we can make a directly analogous definition:

\begin{dfn}
Let $Y$ be the toric variety defined by a fan $\Sigma_Y$, and consider a partition of the rays $\Sigma_Y(1)$ into subsets $E_i$,~$1 \leq i \leq r$, and $F$. Let $D_i$ be the torus invariant divisor corresponding to the set $E_i$ and let $D_F$ be the torus invariant divisor corresponding to $F$. The partition is a \emph{Fano nef partition} if:
\begin{enumerate}
\item the divisor $D_F$ is ample; and
\item each of the divisors $D_i$ is nef.
\end{enumerate}
\end{dfn}

\noindent Note that since $D_F$ is ample and the divisor $\sum_{i=1}^{r} D_i$ is nef, the divisor $-K_{Y} = D_F + \sum_{i=1}^{r}D_i$ is ample, that is, $Y$ is a Fano toric variety.

\begin{lem}
The rays of $\Sigma_Y$ in the set $\bigcup_{i=1}^{r}E_i$ generate a Gorenstein cone of the fan $\Sigma_Y$. 
\end{lem}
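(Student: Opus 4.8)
The plan is to unpack the definition of a Fano nef partition and to use the ampleness of $D_F$ to exhibit a single supporting hyperplane for the cone in question. Recall that the rays $\Sigma_Y(1)$ are partitioned into the sets $E_1,\ldots,E_r$ and $F$, and write $\sigma$ for the cone generated by the rays in $\bigcup_{i=1}^r E_i$. A cone of $\Sigma_Y$ is Gorenstein precisely when its primitive ray generators lie on a common affine hyperplane at lattice distance one from the origin; equivalently, there is an element $m \in M$ with $\langle m, \rho \rangle = 1$ for every ray generator $\rho$ of the cone. So the heart of the argument is to produce such an $m$ for $\sigma$, together with the (easier) verification that $\sigma$ is actually a face of the fan $\Sigma_Y$.

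\medskip

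\noindent First I would produce the distinguished linear functional. Since $\sum_{i=1}^r D_i = -K_Y - D_F$ and each $D_i$ is nef, consider the piecewise linear support functions. The divisor $D_F$ is ample, so its support function $\phi_F$ is strictly convex, and the divisor $\sum_i D_i$ corresponds to the convex support function $\phi := \sum_i \phi_i$ which is identically $0$ on the rays of $\bigcup_i E_i$ (each $D_i$ has coefficient $0$ there) and equals the coefficient of $D_F$, namely $-1$ after the usual sign convention, on the rays of $F$. The key observation is that on the rays belonging to $\bigcup_i E_i$ the support function $\phi_{-K_Y} = \phi + \phi_F$ of the anticanonical divisor takes the value $1$ on every ray generator (this is exactly the statement that $Y$ is Gorenstein Fano, which follows because $-K_Y$ is Cartier with coefficient $1$ on each toric divisor). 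I would then argue that, restricted to the cone $\sigma$, the anticanonical support function is \emph{linear}: on the rays of $\sigma$ only the $E_i$ contribute to $-K_Y$ and these assemble into a single linear functional on $\sigma$, because the $\phi_i$ all vanish on $\sigma$'s rays by the definition of the nef partition. Hence there is a genuine $m_\sigma \in M$ with $\langle m_\sigma, \rho \rangle = 1$ for all ray generators $\rho$ of $\sigma$, which is precisely the Gorenstein condition.

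\medskip

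\noindent Second, I would check that $\sigma$ is a cone of the fan $\Sigma_Y$, not merely a cone spanned by some of its rays. Here the ampleness of $D_F$ does the work: because $\phi_F$ is strictly convex and vanishes on $\bigcup_i E_i$ while being negative elsewhere, the rays in $\bigcup_i E_i$ all lie on the boundary of a single maximal cone of the nef-cone structure, forcing them to span a face of $\Sigma_Y$. Concretely, the functional $\phi_F$ attains its maximum over the rays exactly on $\bigcup_i E_i$, and strict convexity means the subset of rays achieving this maximum spans a cone of the fan. Combining the two steps, $\sigma$ is a Gorenstein cone of $\Sigma_Y$.

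\medskip

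\noindent The main obstacle I anticipate is the second step: reconciling the combinatorial claim that $\bigcup_i E_i$ spans an honest cone of $\Sigma_Y$ with the fact that a Fano nef partition is defined purely in terms of divisor positivity rather than in terms of a prescribed cone structure. One has to rule out the possibility that these rays, while co-spanning a hyperplane, fail to bound a single cone of the fan -- and this is exactly where I expect to need the \emph{strictness} of the convexity of $\phi_F$ (ampleness, not merely nefness, of $D_F$), translating ``strictly supporting hyperplane'' into ``face of the fan.'' The linear-algebra content of the first step, by contrast, I expect to be routine once the right support functions are written down.
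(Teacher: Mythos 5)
Your overall skeleton --- (a) show that $\sigma := \cone\{\rho \mid \rho \in \bigcup_{i=1}^r E_i\}$ is an honest cone of $\Sigma_Y$, and (b) exhibit a single linear functional equal to $1$ on all of its primitive generators --- is exactly the paper's, and your step (a) is a legitimate alternative to the paper's argument. The paper gets (a) from the GIT description of $\Sigma_Y$: ampleness of $D_F$ means the stability condition defining $Y$ is covered by the classes $\{D_\rho \mid \rho \in F\}$, and in the secondary-fan picture this is precisely the condition that the complementary rays span a cone of the fan. Your route --- $\phi_F$ is non-positive, vanishes exactly on the rays of $\bigcup_i E_i$, and strict convexity forces the locus where it vanishes to be a single cone of the fan, whose rays are then exactly $\bigcup_i E_i$ --- is correct in principle, though your key assertion (``strict convexity means the subset of rays achieving this maximum spans a cone of the fan'') is stated rather than proved; it needs the short argument that the zero set of a non-positive strictly convex support function is convex (it is a minimum of linear functionals) and that, by strictness, any two of its points lie in a common maximal cone, so that it is a face of the fan.

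Step (b) as written, however, contains a genuine error. You have the support-function values backwards: $D_i = \sum_{\rho \in E_i} D_\rho$ has coefficient $1$ (not $0$) on every ray of $E_i$, so $\phi = \sum_i \phi_i$ takes the value corresponding to coefficient $1$ on every ray of $\bigcup_i E_i$ and $0$ on the rays of $F$; it is $\phi_F$, not the $\phi_i$, that vanishes on the rays of $\sigma$. More importantly, your stated reason for linearity of the anticanonical support function on $\sigma$ --- ``because the $\phi_i$ all vanish on $\sigma$'s rays'' --- would not give linearity even if it were true: prescribing the values of a piecewise-linear function at the rays of a cone says nothing about its linearity on that cone, and linearity is the entire content of the Gorenstein condition here (the value $1$ at the generators is automatic from the coefficients of $-K_Y$). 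The correct justification, which is the one the paper uses, is that once (a) is established, each $\phi_i$ is linear on $\sigma$ simply because $D_i$ is nef (hence $\QQ$-Cartier, so $\phi_i$ is linear on every cone of $\Sigma_Y$) and $\sigma$ is a cone of $\Sigma_Y$; then $\sum_i \phi_i$ restricted to $\sigma$ is a single linear functional taking the value $1$ on every generator, and it agrees with the anticanonical support function on $\sigma$ since $\phi_F$ vanishes identically there. So your proof is repairable --- run (a) first, then deduce the linearity in (b) from (a) together with ($\QQ$-)Cartierness rather than from the false vanishing claim --- but as written, step (b) does not establish the key point.
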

\begin{proof}
Since $D_F$ is ample the stability condition defining $Y$ is covered by the divisor classes in $F$, and so the complement of these rays define a cone $\sigma$ in the fan $\Sigma_Y$. Note that $\Sigma_Y(1)\backslash F$ is precisely the set $\bigcup_{i=1}^{r}E_i$. Moreover, since each divisor $D_i$ is nef there is a function $\phi_i$ which is linear on $\sigma$ and evaluates to one on each of the ray generators of $E_i$ and to zero on all other ray generators of the cone $\sigma$. The sum $\phi$ of the $\phi_i$ defines a linear function on $\sigma$ evaluating to one on every generator, which implies that $\sigma$ is a Gorenstein cone.
\end{proof}

Consider a Fano polytope $P \subset N_\QQ$ and a scaffolding $S$ of $P$ with shape $Z = \prod_{i=1}^{k}{\PP^{a_i}}$. Lemma~\ref{lem:scaffolding_from_poly} and Theorem~\ref{thm:embedding} imply that these data determine a toric variety $Y_S$, divisors $D_1,\ldots,D_r$, on $Y_S$ whose linear systems define a Fano toric complete intersection, and a Laurent polynomial $f_S$ with $P = \Newt(f_S)$. Write $\Sigma_{Y_S}$ for the fan of $Y_S$, $E_i$ for the subset of the rays of $\Sigma_{Y_S}$ determined by $D_i$, and $F$ for the set $\Sigma_{Y_S}(1) \setminus \bigcup_{i=1}^{r} D_i$. If the divisors $D_i$ of $Y_S$ are nef, then $F \cup E_1 \cup \cdots \cup E_r$ is a Fano nef partition. Furthermore if $Y_S$ is $\QQ$-factorial, then the Laurent polynomial $f_S$ is mirror dual to the complete intersection defined by the vanishing of a general section of $\bigoplus_{i=1}^{r} \cO(D_i)$. Conversely, a Fano nef partition for which the rays in $\bigcup_{i=1}^{r}E_i$ span a smooth cone determines a scaffolding of a Fano polytope with shape $Z$ equal to a product of projective spaces.

\begin{rem}
The condition that the $D_i$ are nef is much stronger than it appears. In general $Y_S$ is far from being $\QQ$-factorial, in which case there is no reason for the $D_i$ to lie in $\QQ$-Cartier divisor classes. After making a small resolution of $Y_S$ it is reasonable to then expect the $D_i$ to be nef divisors, but we then usually lose the conclusion of Theorem~\ref{thm:embedding}.
\end{rem}

\begin{rem}
Recall that the ray generators of the fan $\Sigma_{Y_S}$ lie in $\Div_{T_{\bar{M}}}(Z)\oplus N_U$.
The set $E_i$ in the nef partition above is given by the $a_i+1$ divisors of the $i$th factor $\PP^{a_i}$ of the shape $Z = \prod_{i=1}^{k} \PP^{a_i}$. In particular, therefore, $E_i$ spans a smooth cone in $\Sigma_{Y_S}$.
This suggests a further generalisation of the notion of scaffolding in which the cone generated by the standard basis in $\Div_{T_{\bar{M}}}(Z)$ is replaced by an arbitrary Gorenstein cone. This is the most natural setting from the point of view of nef partitions: it would allow us to treat a broader class of toric complete intersections. We chose here, however, to pursue the alternative generalisation where the shape $Z$ need no longer be the product of projective spaces, as this allows us to describe embeddings of toric varieties that are very far from complete intersections. It would be very interesting to see if these ideas can be translated back to the Calabi--Yau setting, and whether they give access to more general embeddings of Calabi--Yau manifolds in toric varieties.
\end{rem}

Batyrev--Nill have determined necessary and sufficent conditions for a polytope to admit a nef partition~\cite{BN08}, based on certain \emph{Cayley cones} associated to a Minkowski decomposition of $\Delta^*$. 

\begin{dfn}
\label{dfn:cayley}
Given polytopes $\nabla_1,\ldots,\nabla_r$ in $N_\QQ$ the \emph{Cayley polytope} of length $r$ is
\[
\nabla_1\star \cdots \star \nabla_r := \conv{\nabla_1+e_1,\ldots, \nabla_r+e_r} \subset N_\QQ\times \QQ^r.
\]
The \emph{Cayley cone} is the cone
\[
\QQ_{\geq 0}(\nabla_1\star \cdots \star \nabla_r) = \QQ_{\geq 0}(\nabla_1+e_1) + \ldots +  \QQ_{\geq 0}(\nabla_r+e_r).
\]
\end{dfn}

\begin{pro}[\!\!\!{\cite[Proposition~3.6]{BN08}}]
Given a reflexive polytope $\Delta$ and a Minkowski decomposition 
\[
\Delta^* = \nabla_1 + \ldots + \nabla_r
\]
the following conditions are equivalent:
\begin{enumerate}
\item  the dual of the Cayley cone is a reflexive Gorenstein cone of index $r$ that can be realised as the Cayley cone of $r$ polytopes;
\item  the Cayley polytope $\nabla_1\star \cdots \star \nabla_r$ is a Gorenstein polytope of index\footnote{That is, a polytope $P$ such that $rP$ is reflexive, possibly after translation.} $r$ containing a special
$(r - 1)$-simplex (see~\cite{BN08});
\item the given Minkowski decomposition is a nef partition, that is, there are points $p_i \in \nabla_i$ for each $1\leq i\leq r$ such that $\sum_i{p_i} = 0$.
\end{enumerate}
\end{pro}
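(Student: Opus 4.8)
The plan is to split the three-way equivalence into a polytope-level correspondence (ii) $\Leftrightarrow$ (iii) and a cone-level correspondence (i) $\Leftrightarrow$ (ii), organising everything around a single degree functional. The key preliminary observation is that the Cayley cone $C := \QQ_{\geq 0}(\nabla_1\star\cdots\star\nabla_r) \subset N_\QQ\times\QQ^r$ carries a canonical candidate for its Gorenstein degree element, namely the functional $\deg$ sending $(n,a_1,\ldots,a_r)\mapsto a_1+\cdots+a_r$: since each generating polytope $\nabla_i+e_i$ lies in the affine subspace where the $i$th extra coordinate is $1$ and the others vanish, $\deg$ evaluates identically to $1$ on $\nabla_1\star\cdots\star\nabla_r$, and the $\nabla_i$ being lattice polytopes forces the vertices of the Cayley polytope to be the primitive ray generators of $C$. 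Thus $C$ is automatically Gorenstein, and the entire content of the Proposition is to detect when its \emph{dual} cone $C^\vee$ is also Gorenstein of the matching index and when $C^\vee$ again splits as a Cayley cone. The points $p_i$ will supply exactly this missing data.

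I would first treat (ii) $\Leftrightarrow$ (iii), which is the easier, polytope-level half. Given lattice points $p_i\in\nabla_i$ with $\sum_i p_i = 0$, form the $(r-1)$-simplex $\Sigma := \conv{(p_1,e_1),\ldots,(p_r,e_r)}$ inside the Cayley polytope. The hypothesis $\sum_i p_i = 0$ guarantees that the barycentre $\frac1r\sum_i (p_i,e_i)$ projects to the origin of $N_\QQ$, which after translation is the distinguished lattice point certifying that $r(\nabla_1\star\cdots\star\nabla_r)$ is reflexive; here the lattice-distance-one condition needed for reflexivity follows directly from the reflexivity of $\Delta$ applied to the Minkowski decomposition $\Delta^*=\sum_i\nabla_i$. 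I would then verify that $\Sigma$ is special in the sense of~\cite{BN08}, i.e.\ that each facet of the Gorenstein polytope contains all but one of its vertices, by reading off the supporting hyperplanes from the facet functions of the $\nabla_i$. The reverse implication (ii) $\Rightarrow$ (iii) runs the same computation backwards: the $r$ vertices of a special simplex project to lattice points $p_i$, and the special condition together with the placement of the barycentre forces $\sum_i p_i = 0$.

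The substantive work is (i) $\Leftrightarrow$ (ii), where Borisov's duality for nef partitions does the heavy lifting. Passing to the cone over the Gorenstein polytope, the standard correspondence between Gorenstein polytopes of index $r$ and reflexive Gorenstein cones of index $r$ makes the statement ``$C$ is a reflexive Gorenstein cone'' equivalent to the Gorenstein-polytope part of (ii) with essentially no work. The delicate point is that the presence of a \emph{special} $(r-1)$-simplex must force $C^\vee$ to decompose as a Cayley cone of $r$ polytopes. Here I would use the special simplex to produce a direct-sum splitting of the ambient lattice that is simultaneously compatible with $C$ and with $C^\vee$: the $r$ vertices of the simplex single out $r$ distinguished supporting functions of $C^\vee$, and the defining property of the special simplex (meeting every facet in codimension one) is precisely what guarantees that the induced refinement of $C^\vee$ is a Minkowski/Cayley splitting rather than an arbitrary subdivision. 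Tracking the degree element of $C^\vee$ through this splitting then yields the index-$r$ statement.

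I expect the main obstacle to be exactly this last step: showing that a special simplex is not merely a combinatorial decoration of the Gorenstein polytope but genuinely certifies that the dual cone is itself a Cayley cone of $r$ lattice polytopes. This requires keeping two interlocking Gorenstein structures in play at once — the degree element $\deg$ of $C$ and the dual degree element of $C^\vee$ — and checking that the lattice splitting extracted from the special simplex is compatible with both, so that the dual decomposition $\Delta = \Theta_1+\cdots+\Theta_r$ it induces is again a nef partition. Once this compatibility is in hand, every remaining implication reduces to lattice-distance-one bookkeeping that follows from the reflexivity of $\Delta$.
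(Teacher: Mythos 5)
There is nothing in the paper to compare your argument against: the paper states this result with an attribution to Batyrev--Nill \cite[Proposition~3.6]{BN08} and gives no proof, so your proposal can only be judged on its own terms. Judged that way, it assembles the right cast of characters --- the degree functional $\sum_i a_i$, the simplex on the points $(p_i,e_i)$, a splitting of the dual cone --- but its two pivotal steps are, respectively, miscalibrated and missing. The miscalibration: you assert that the content of the proposition is to detect when $C^\vee$ is ``also Gorenstein of the matching index'' and that the $p_i$ ``supply exactly this missing data''. In fact $C$ is a reflexive Gorenstein cone of index $r$ for \emph{every} Minkowski decomposition of $\Delta^*$ into lattice polytopes: the primitive facet normals of $C$ are the coordinate functionals $(0,e_{i_0}^*)$ (for those $i_0$ with $\sum_{i\neq i_0}\nabla_i$ full-dimensional) together with the vectors $(v,\phi_1(v),\ldots,\phi_r(v))$ for $v$ a vertex of $\Delta$, where $\phi_i(v)=-\min_{\nabla_i}\langle v,\cdot\rangle$, and reflexivity of $\Delta$ gives $\sum_i\phi_i(v)=1$, so all of them lie at height one against $(0,1,\ldots,1)$. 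This facet computation is the real work, and it is exactly what your phrase ``lattice-distance-one bookkeeping that follows directly from the reflexivity of $\Delta$'' waves away; the $p_i$ play no role in it. A two-dimensional example makes the point: $\nabla_1=\conv{(0,0),(1,1)}$, $\nabla_2=\conv{(-1,0),(0,-1)}$ gives $\nabla_1+\nabla_2=\conv{\pm(1,0),\pm(0,1)}$, so $\Delta=[-1,1]^2$ is reflexive and the Cayley cone is reflexive Gorenstein of index $2$, yet no lattice points $p_i\in\nabla_i$ sum to zero. So (iii) can fail while every ``Gorenstein of index $r$'' clause in (i) and (ii) holds; the equivalence lives entirely in the special-simplex and Cayley-splitting clauses.

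Those clauses are where your proposal stops being a proof. For (i) $\Leftrightarrow$ (ii) you invoke ``Borisov's duality for nef partitions'', but that duality takes a nef partition --- condition (iii) --- as its input, so using it there is circular; and the step you yourself isolate as ``the main obstacle'' (that a special simplex forces $C^\vee$ to split as a Cayley cone) is precisely the theorem, and is left unproved. The repair is to route every implication through (iii), using the two degree elements and integrality. Given lattice points $p_i\in\nabla_i$ with $\sum_i p_i=0$, set $v_i:=(p_i,e_i)\in N\oplus\ZZ^r$: each $v_i$ is non-negative on $C^\vee$, and they sum to the degree element $(0,1,\ldots,1)$ of $C^\vee$, so on each primitive ray generator of $C^\vee$ the $r$ pairings are non-negative integers summing to $1$, hence exactly one equals $1$. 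This partition of the rays of $C^\vee$ is the Cayley structure required in (i); pairing the $v_i$ against the facet normals of $C$ in the same way shows each facet of the Cayley polytope omits exactly one vertex $(p_i,e_i)$, which is (ii). Conversely, degree bookkeeping forces the vertices of any special simplex, and likewise the distinguished functionals of any Cayley structure on $C^\vee$, to be lattice points of $C\cap\{\deg=1\}$, one for each index $i$, summing to $(0,1,\ldots,1)$ --- which recovers the $p_i$ of (iii). Without these repairs your outline does not close up into a proof.
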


Given any scaffolding $S$ of a Fano polytope $P$, we can produce a large number of polytopes $\tilde{P}$ which project to $P$ using Cayley product-type constructions. For any lattice $L$ and any set of lattice vectors $R = \{r_s \in L \mid s \in S\}$, the polytope
\[
\tilde{P}_R := \conv{(P_D+\chi) + r_s \mid s = (D,\chi) \in S} \subset (N \oplus L)\otimes_\ZZ \QQ
\]
admits a projection to $P$, induced by the projection $N \oplus L \rightarrow N$. The scaffolding $S$ determines a canonical such polytope, given by setting 
\begin{align*}
  L = \Pic(Z), && R = \{\cO(D) \in \Pic(Z)\mid (D,\chi) \in S  \}.
\end{align*}
We denote this polytope $\tilde{P}_R$ by $\tilde{P}$. In the case where the shape $Z$ is a product of projective spaces, there is a natural choice of coefficents on the integral points of $\tilde{P}_R$ (for any $R$) that defines a Laurent polynomial with Newton polytope $\tilde{P}_R$ which projects to $f_S$.

 Given a scaffolding which defines a Fano nef partition we can describe both the toric ambient space $Y_S$ and the Laurent polynomial $f_S$ determined by $S$ in terms of Cayley products.

\begin{dfn}
\label{dfn:P_S}
Fix a Fano polytope $P$ and a scaffolding $S$ of $P$ with shape $Z = \prod_{i=1}^{k}{\PP^{a_i}}$ which determines a Fano nef partition of the toric ambient space $Y_S$. Define the polytope
\[
P_S := \conv{\{(e_i,0) \mid i \in \Sigma_Z(1)\} \cup S } \subset \tilde{N} = \Div_{T_{\bar{M}}}(Z) \oplus N_U.
\]
\end{dfn}

\noindent The toric variety defined by the spanning fan of $P_S$ is $Y_S$. Furthermore  the polytopes $\tilde{P}$ and $P_S$ are related by mutation. To describe this mutation we fix a boundary divisor $v_i$ of each projective space factor of $Z$. The divisors $v_i$ generate the kernel of a projection $\pi \colon \tilde{N} \rightarrow N$ and hence determine an isomorphism $\tilde{N} \rightarrow N\oplus \Pic(Z)$. Let $\tilde{P}_1$ denote the convex hull of $\tilde{P}$ and the set 
\[
\{\pi^\star_i\cO(1) \mid 1\leq i\leq r\} \subset  \{0\}\times \Pic(Z).
\]
A mutation of $\tilde{P}_1$ (or indeed of any other lattice polytope in $\tilde{N}_\QQ$) is determined by a \emph{weight vector }$w \in \tilde{M}$ and a polytope, the \emph{factor}, $F \subset w^\bot$. We fix a sequence of mutations indexed by $[r]$ by specifying their weight vectors $w_i$ and factors $F_i$, as follows:
\begin{enumerate}
\item let $w_i \in \tilde{M}$ be $-f^\star_i$, where $f^\star_i$ is the $i$th element of the basis dual to $\{v_1,\ldots,v_r\} \subset \tilde{N}$;
\item let $F_i$ be the the convex hull of the $(a_i+1)$ elements of the standard basis of $\Div_{T_{\bar{M}}}(Z)$ corresponding to the $i$th projective space factor in $Z$.
\end{enumerate}
The polytope obtained by applying the given sequence of mutations (in any order) to $\tilde{P}_1$ is~$P_S$.

\begin{eg}
\label{ex:dP4_polynomial}
We verify this in a simple example. Let $P$ and $S$ be the Fano polygon and scaffolding shown:

\begin{center}
\begin{tikzpicture}[transform shape]
\begin{scope}
\clip (-1.3,-1.3) rectangle (1.3cm,1.3cm); 
\draw[line width=0.5mm, black] (-1,1) -- (1,1) -- (1,-1) -- (-1,-1) -- (-1,1); 
\draw[line width=0.5mm, red] (-0.9,0.9) -- (0.9,0.9) -- (0.9,-0.9) -- (-0.9,-0.9) -- (-0.9,0.9) -- (0.9,0.9); 
\foreach \x in {-7,-6,...,7}{                           
    \foreach \y in {-7,-6,...,7}{                       
    \node[draw,shape = circle,inner sep=1pt,fill] at (\x,\y) {}; 
    }
}
 \node[draw,shape = circle,inner sep=4pt] at (0,0) {}; 
\end{scope}
\end{tikzpicture}
\end{center}
The shape here is $Z = \PP^1 \times \PP^1$. The Laurent polynomial associated to this scaffolding is
\[
f_S = \frac{(1+x)^2(1+y)^2}{xy}.
\]
Applying Algorithm~\ref{alg:laurent_inversion} to the scaffolding $S$ we obtain the toric variety $Y_S = \PP^4$ and an embedded toric degeneration of the del~Pezzo surface $dP_4$ to the surface $X_P$. The polytope $\tilde{P}_1$ is the Newton polytope of the polynomial
\[
g_S = z_1 + z_2 + \frac{(1+x)^2(1+y)^2}{xyz^2_1z^2_2}.
\]
Recall that the divisor $D$ defining the (unique) element $(D,0)$ of $S$ is a section of the line bundle $\cO(2,2) \in \Pic(\PP^1\times \PP^1)$. Mutating $g_S$ as described, we obtain the Laurent polynomial
\[
h_S = z_1(1+x) + z_2(1+y) + \frac{1}{xyz^2_1z^2_2}.
\]
The Newton polytope of $h_S$ is isomorphic to the Newton polytope of the polynomial
\[
f_{\PP^4} = x_1+x_2+x_3+x_4+\frac{1}{x_1x_2x_3x_4},
\]
that is, to the polytope $P_S$.
\end{eg}

Both of the scaffoldings described in Example~\ref{ex:dP6} arise from Fano nef partitions. Example~\ref{ex:dP6_mutated} shows that this property is not preserved under mutation of scaffoldings, whereas the Cayley polytope $\tilde{P}$ always exists. Thus the polytope $\tilde{P}$ associated to a scaffolding $S$ of $P$ is a natural generalisation of the notion of nef partition. 

\section{Amenable Collections and Towers of Projective Bundles}
\label{sec:torus_charts}

Theorem~\ref{thm:embedding} asserts that any scaffolding of a polytope $P$ determines an embedding of the toric variety $X_P$ into an ambient toric variety $Y$. Lemma~\ref{lem:scaffolding_from_poly} tells us that the Laurent polynomials obtained via the Przyjalkowski method encode enough data to reconstruct $X_P$ as a complete intersection, via a scaffolding on $P$ with shape a product of projective spaces. In fact the Przyjalkowski method can be generalised via the use of \emph{amenable collections subordinate to a nef partition}, introduced by Doran--Harder in~\cite{Doran--Harder}. These allow one to consider both more general toric complete intersection models for $X_P$ and more general Laurent polynomial mirrors $f$. In this section we show that these embeddings and Laurent polynomials are determined by scaffoldings of $P$ with a shape which is a tower of projective space bundles, rather than a product of projective spaces; in particular we see that our Laurent inversion construction (which allows the shape $Z$ to be any toric variety) generalises the methods of~\cite{Doran--Harder}.

Suppose, as before, that we have:
\begin{equation}
\label{eq:LG_data}
  \begin{minipage}{0.93\linewidth}
    \begin{enumerate}
    \item orbifold GIT data $\Theta = (K;\LL;D_1,\ldots,D_R;\omega)$;
    \item a convex partition with basis $B, S_1, \ldots, S_k, U$ for $\Theta$; and
    \item a choice of elements $s_i \in S_i$ for each $1\leq i\leq k$.
    \end{enumerate}
  \end{minipage}
\end{equation}
Let $Y$ be the corresponding toric orbifold,  let $X \subset Y$ denote the complete intersection defined by a regular section of the vector bundle $\bigoplus_i L_i$ and, following the notation used in~\S\ref{sec:laurent_inversion}, let $\widetilde{N}$ denote the ray lattice of $Y$. Following~\cite{Doran--Harder}, an \emph{amenable collection} subordinate to the partition $S_1, \ldots, S_k$ is a collection of vectors $w_1,\ldots,w_k$ that satisfies:
\begin{equation}
\label{eq:weight_vectors}
  \begin{minipage}{0.93\linewidth}
    \begin{enumerate}
    \item $\langle w_i , \rho_j \rangle = -1$ for all $j \in S_i$ and all $i$;
    \item $\langle w_i , \rho_j \rangle = 0$ for all $j \in S_l$ such that $l < i$ or $j \in U$ and all $i$;
    \item $\langle w_i, \rho_j \rangle \geq 0$ for all $j \in S_l$ such that $l > i$ and all $i$.
    \end{enumerate}
  \end{minipage}
\end{equation}
\begin{rem}
The condition $\langle w_i ,\rho_j\rangle = 0$ for $j \in U$ stems from the particular form of the algorithm used in~\S\ref{sec:forward}. There is a more general form of this algorithm in which this condition may be dropped.
\end{rem}

\noindent An amenable collection determines both a toric section of the bundle $\bigoplus_{1\leq i\leq k}L_i$, and so a toric degeneration of $X$, and a Laurent polynomial mirror $f$. These constructions are both explained in detail in~\cite{Doran--Harder}.

\begin{pro}
\label{pro:amenable}
An amenable collection determines and is determined by a tower of projective space bundles $Z$. Furthermore, given an amenable collection subordinate to a nef partition, the toric degeneration of $X$ to $X_P$ constructed in~\cite{Doran--Harder} is equal to the toric embedding determined by Theorem~\ref{thm:embedding} from a scaffolding of $P$ with shape $Z$.
\end{pro}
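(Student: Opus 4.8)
The plan is to prove the two assertions of Proposition~\ref{pro:amenable} in turn: first the bijection between amenable collections and towers of projective space bundles, and then the identification of the Doran--Harder degeneration with the scaffolding embedding of Theorem~\ref{thm:embedding}.

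For the bijection I would first observe that the conditions \eqref{eq:weight_vectors} are \emph{triangular}: $w_i$ pairs to $-1$ on $S_i$, to $0$ on $U$ and on every earlier block, and to a nonnegative integer on each later block. Since $\{\rho_j \mid j \notin B\}$ is a distinguished basis of the ray lattice $\widetilde{N}$ (as in \S\ref{sec:embedded_ci}), each $w_i$ is determined by its pairings with the $\rho_j$, $j \notin B$, so an amenable collection amounts to a choice of nonnegative integers $\langle w_i, \rho_j\rangle$ for $j$ in the blocks $S_l$ with $l > i$. I would interpret these integers as the splitting type of a tower $Z = Z_k \to \cdots \to Z_0 = \mathrm{pt}$ in which $Z_i \to Z_{i-1}$ is the $\PP^{|S_i|-1}$-bundle $\PP(\bigoplus \cO(\cdots))$, built using the standard toric description of a projective bundle (one adjoins $|S_i|$ rays subject to a single linear relation, the relative hyperplane class, normalized by conditions (i)--(ii)). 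Effectivity of the twists, needed for each stage to be a genuine projective bundle with its boundary divisors nef, is precisely condition (iii). Conversely, the relative hyperplane classes of a given tower $Z$, pulled back to $\widetilde{N}$, furnish vectors $w_1,\ldots,w_k$ satisfying (i)--(iii), so the correspondence is a bijection.

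Next I would produce the scaffolding. Following \cite{Doran--Harder}, the amenable collection determines a Laurent polynomial $f$ together with a decomposition of $f$ into terms $f_a$. Generalizing the argument of Lemma~\ref{lem:scaffolding_from_poly}, each $\Newt(f_a)$ is a translate of the polyhedron of sections $P_{D_a}$ of a nef divisor $D_a$ on the tower $Z$ just constructed, so these data assemble into a scaffolding $S$ of $P = \Newt(f)$ with shape $Z$. Applying Algorithm~\ref{alg:laurent_inversion} to $S$, the weight matrix $\cM$ acquires an $r \times r$ identity block, columns recording the $N_U$-components of the struts, and columns expressing each $D_a$ in the basis of $T$-invariant divisors of $Z$. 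I would then check that these columns reproduce the characters $D_1,\ldots,D_R$ of the GIT data for $Y$: the identity block and the $N_U$-columns are immediate, and the remaining columns match because the expansion of $D_a$ in the boundary divisors of the tower records exactly the twisting integers $\langle w_i, \rho_j\rangle$ governing $\cM$. Hence $Y_\omega$ and $Y$ share GIT data (differing at most in the stability condition), and the line bundles $L_i$ agree.

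Finally I would compare the two subvarieties of $Y$. Theorem~\ref{thm:embedding} realizes $X_P \hookrightarrow Y_\omega$ as the closure of the torus map $\bar{N} \oplus N_U \to \Div_{T_{\bar{M}}}(Z) \oplus N_U$ sending a character of $T_{\bar{M}}$ to its principal divisor on $Z$ and acting as the identity on $N_U$; for a tower $Z$ the ideal of this closure is generated by binomials $\prod x_i = 1$, the product taken over boundary divisors lying in a common fibre of a stage of the tower, and \S\ref{sec:embedded_ci} exhibits $X_P$ as the toric central fibre of the flat family degenerating the general complete intersection $X$. On the other side, the amenable collection determines a $T$-invariant section of $\bigoplus_i L_i$ cut out by exactly these binomials, together with the flat family interpolating to a general section. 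Matching the binomials termwise, fibre by fibre of the tower via the dictionary above, identifies Doran--Harder's central fibre with $X_P$ and their degeneration with the embedded family of Theorem~\ref{thm:embedding}. I expect the main obstacle to lie in this last comparison: unwinding the Doran--Harder construction of the toric section and matching it monomial by monomial with the scaffolding decomposition --- in particular verifying that nef divisors on the tower correspond to the convex bundles $L_i$ and that the two binomial ideals literally coincide --- is the technical heart, where the triangular structure of \eqref{eq:weight_vectors} must be used to organize the bookkeeping across the stages of the tower.
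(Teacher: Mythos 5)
Your outline coincides with the paper's actual proof in its main lines: construct the tower $Z$ stage by stage from the nonnegative pairings $\langle w_i,\rho_j\rangle$ (the paper sets $Z_j = \PP_{Z_{j-1}}\bigl(\bigoplus_{n\in S_j} L_{j-1,n}\bigr)$ with $L_{m,n} = \pi_m^\star(L_{m-1,n})\otimes\cO(-\langle w_m,\rho_n\rangle)$), and then identify the Doran--Harder degeneration with the embedding of Theorem~\ref{thm:embedding} by comparing binomial ideals. Your first part is essentially the paper's construction, stated in both directions.

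However, there is a genuine gap in the final comparison, and it sits exactly where you defer the ``technical heart''. You assert that for a tower $Z$ the closure of $T_N\hookrightarrow T_{\widetilde{N}}$ is cut out by binomials $\prod x_i = 1$ with the product taken over boundary divisors lying in a common fibre of one stage. That is true only when $Z$ is a product of projective spaces (the case treated in the remark following Theorem~\ref{thm:embedding}). For a genuine tower the relations of $\Sigma_Z$ involve the twists: for each stage $i$ the relation is $\sum_j \alpha_{i,j}\rho_j = 0$ with $\alpha_{i,j} = -\langle w_i,\rho_j\rangle$, equal to $1$ for $j\in S_i$ and nonpositive on later stages, so the associated binomial is $\prod_{j\in S_i} x_j - \prod_{j\notin S_i} x_j^{\langle w_i,\rho_j\rangle}$, not a fibrewise product set equal to $1$. (Compare the paper's own example with shape $\FF_2$, where the equations are $x_4^2 - x_1x_2$ and $x_0^2 - x_3x_4$.) With the ideal in the form you state, the termwise match with Doran--Harder's equations would fail whenever some twist is nonzero --- which is precisely the case that distinguishes amenable collections from the ordinary Przyjalkowski method. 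The paper closes this point in one observation: the $w_i$ are exactly the relations of the fan of the tower, viewed as elements of $\widetilde{M}$ annihilating the image of $N$ under $N \to \Div_{T_{\bar{M}}}(Z)\oplus N_U$, and any such $w$ cuts out on the closure of the torus the binomial $\prod_{\langle w,\rho\rangle>0} x_\rho^{\langle w,\rho\rangle} - \prod_{\langle w,\rho\rangle<0} x_\rho^{-\langle w,\rho\rangle}$; applied to $w_1,\ldots,w_k$ this reproduces Doran--Harder's system verbatim, with no fibre-by-fibre bookkeeping. A secondary point: your detour through the Laurent polynomial and the re-derivation of the weight matrix via Algorithm~\ref{alg:laurent_inversion} is not needed --- the paper obtains the scaffolding directly by pushing the rays $\rho_s$ of $Y$ through the projection $\widetilde{N} \to \Div_{T_{\bar{M}}}(Z)$ and taking the resulting nef divisors as struts.
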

\begin{proof}
The toric embedding $X_P \hookrightarrow Y$ determined by an amenable collection has the following straightforward description in terms of the Cox co-ordinates of $Y$~\cite[Proposition~2.7]{Doran--Harder}. For each $1\leq i\leq k$ consider the binomial equation in Cox co-ordinates
\[
\prod_{j \in S_i}{x_j} - \prod_{j \notin S_i}{x_j^{\langle w_i, \rho_j\rangle}} = 0.
\]
\noindent The toric variety cut out by all of these equations is a toric degeneration of $X$. 

From an amenable collection we define $Z$ inductively, starting from a point $Z_0$. For each $1\leq j\leq k$ we define a toric variety $Z_j$ and a $\PP^{|S_j|-1}$ bundle $\pi_j \colon Z_j \rightarrow Z_{j-1}$. Each $Z_j$ is the projectivisation of a split vector bundle, and so is determined by a collection of line bundles on $Z_{j-1}$. First we specify line bundles $L_{m,n}$ for all $n \in S_j$ and $m < j$ recursively by setting
\[
L_{m,n} := \pi^\star_m(L_{m-1,n})\otimes \cO(-\langle w_m,\rho_n \rangle).
\]
Here $\cO(-1)$ is the tautological line bundle on the projective space fibration $\pi_j$ and $L_{0,n} := \cO$. Define $\pi_j$ to be the projectivisation of
\[
\PP_{Z_{j-1}}\left(\bigoplus_{n \in S_j} L_{j-1,n}\right)
\]
\noindent and define $Z := Z_k$. By construction the variety $Z$ is toric, and we can easily write down a generating set for the relations between rays of the fan of $Z$. Indeed, writing $z$ for the number of rays of $Z$, there is a partition of $[z]$ into $k$ sets of sizes $|S_1|,\ldots,|S_k|$ determined by the iterated bundle structure of $Z$. For each $1\leq i\leq k$ there is a relation $\sum_{j=1}^{z}{\alpha_{i,j}\rho_j}$ where $\alpha_{i,j} = -\langle \rho_j,w_i \rangle$. Note that the value of $-\langle \rho_j,w_i \rangle$ is positive only if $j \in S_i$, in which case it is equal to $1$. 

Recall that a scaffolding with shape $Z$ defines an embedding of lattices $N \rightarrow \widetilde{N} = \Div_{T_{\bar{M}}}(Z)\oplus N_U$. The relations described in the previous paragraph define hyperplanes in the lattice $\Div_{T_{\bar{M}}}(Z)$ and thus on $\widetilde{N}$. However any element $w$ in the dual lattice to $\widetilde{N}$ defines a binomial in Cox co-ordinates:
\[
\prod_{\rho\text{\ s.t.\ }\langle w,\rho \rangle > 0 }\!\!{x^{\langle w,\rho \rangle}_\rho} - \!\!\prod_{\rho\text{\ s.t.\ }\langle w,\rho \rangle < 0 }\!\!{x^{-\langle w,\rho \rangle}_\rho}.
\]
Evidently these binomials are precisely those defining $X_P$ as a subvariety of $Y$. Thus the system of binomials determined by an amenable collection is also determined by a scaffolding $S$ with shape $Z$, obtained by fixing the struts of $S$ (nef divisors on $Z$) via the projection $\widetilde{N} \rightarrow \Div_{T_{\bar{M}}}(Z)$. 
\end{proof}

\begin{rem}
This result is compatible with Mirror Symmetry: an amenable collection defines a Laurent polynomial $f$ much as in~\S\ref{sec:forward}, so that $f$ is the sum of terms $x_i$ whose Newton polyhedra are nef divisors on a tower of projective space bundles $Z$. Thus we can determine $Y$ and the toric embedding of $X_P$ from this Laurent polynomial $f$ and its scaffolding.
\end{rem}

\begin{eg}
A del~Pezzo surface $X_4$ of degree 4 is a $(2,2)$ complete intersection in $\PP^4$. Using the methods discussed in~\S\ref{sec:forward} one can construct a toric degeneration of this del~Pezzo surface with central fibre
\begin{align*}
  & x_0^2 - x_1x_2 = 0, && x_0^2 - x_3x_4= 0,
\end{align*}
\noindent where $x_0,\ldots,x_4$ are the homogeneous co-ordinates on $\PP^4$. Using amenable collections we now describe another toric degeneration of $X_4$. Let $\widetilde{N} \cong \ZZ^4$ be the ray lattice of $\PP^4$, and fix a convex partition with basis by setting $B = \{1\}$, $S_1 = \{2,3\}$, $S_2 = \{4,5\}$, and $U = \varnothing$. Choose an amenable collection $\{w_1, w_2\}$ in $M$ by setting
\begin{align*}
& w_1 = (-1,-1,0,2), 
&& w_2 = (0,0,-1,-1).
\end{align*}
The two equations defined by the $w_i$ are
\begin{align}\label{eq:binomial_quadrics}
& x^2_4 - x_1x_2 = 0, &&
x^2_0 - x_3x_4 = 0.
\end{align}
To compute the corresponding scaffolding we first need to determine $Z$. Following the proof of Proposition~\ref{pro:amenable} we see that $Z \cong \mathbb{F}_2 := \PP_{\PP^1}(\cO\oplus \cO(-2))$. The scaffolding is determined by taking rays of $Y$ not contained in the standard basis and viewing them as nef divisors on $\mathbb{F}_2$. In this case we have only the ray $(-1,-1,-1,-1)$, which corresponds to the toric boundary of $\mathbb{F}_2$. Consequently the scaffolding we obtain consists of a single triangle: see Figure~\ref{fig:scaff2}. The rays shown on Figure~\ref{fig:scaff2} are obtained by pulling back the fan of $\PP^4$ along the inclusion of the subspace of $\widetilde{N}$ annihilating both $w_1$ and $w_2$. In particular the toric variety defined by this fan is a quotient of the weighted projective plane $\PP(1,1,2)$ defined by the binomial quadrics \eqref{eq:binomial_quadrics} in $\PP^4$. This is an instance of Proposition~\ref{pro:anticanonical}, with shape $\FF_2$.
\begin{figure}
\begin{tikzpicture}[transform shape]
\begin{scope}
\clip (-1.3,-1.3) rectangle (1.3cm,3.3cm); 
\draw[line width=0.5mm, black] (-1,3) -- (-1,-1) -- (1,-1) -- (-1,3); 
\draw[line width=0.5mm, red] (-0.9,2.6) -- (-0.9,-0.9) -- (0.85,-0.9) -- (-0.9,2.6) -- (-0.9,-0.9);
\foreach \x in {-7,-6,...,7}{                           
    \foreach \y in {-7,-6,...,7}{                       
    \node[draw,shape = circle,inner sep=1pt,fill] at (\x,\y) {}; 
    }
}
 \node[draw,shape = circle,inner sep=4pt] at (0,0) {}; 
\end{scope}
\end{tikzpicture}
\caption{A scaffolding determined by an amenable collection.}
\label{fig:scaff2}
\end{figure}
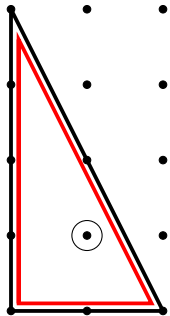
\end{eg}
\section*{Acknowledgements}
We thank Alessio Corti and Andrea Petracci for many useful conversations, and Cinzia Casagrande for an extremely helpful observation about the Fano manifold in~\S\ref{sec:new_4d}. TC was supported by ERC Starting Investigator Grant 240123, ERC Consolidator Grant 682603, and EPSRC Program Grant EP/N03189X/1. AK was supported by ERC Starting Investigator Grant 240123 and EPSRC Fellowship~EP/N022513/1. TP was supported by an EPSRC Prize Studentship and an EPSRC Doctoral Prize Fellowship. TC thanks the University of California at Berkeley for hospitality during the writing of this paper.

\appendix

\section{The Proof of Theorem~\ref{thm:embedding}}
\label{sec:embedding_proof}

Throughout this section we fix a Fano polytope $P$ together with a scaffolding $S$ of $P$ with shape $Z$. We show that $X_P$ is a toric subvariety of the ambient space $Y_S$ defined in~\S\ref{sec:laurent_inversion}, via the embedding of tori defined in the discussion following Theorem~\ref{thm:embedding}. We begin by constructing a polytope $Q_S$ defined by a polarisation of the toric variety $Y_S$.

\begin{dfn}
\label{dfn:ambient}
  Let $\tN$ denote the lattice $\Div_{T_{\bar{M}}}(Z)\oplus N_U$ and let $\tM$ denote the dual lattice. Denote the standard basis elements of $\Div_{T_{\bar{M}}}(Z) \cong \ZZ^k$ by $e_i$ for $1\leq i\leq k$. Define elements $\rho_s = (-D,\chi) \in \tN$ for each $s = (D,\chi) \in S$. The polytope $Q_S$ is defined by
  \[
  Q_S := \left\{u \in \tM_\QQ \mid \langle u,e_i \rangle \geq 0\text{ and }\langle u,\rho_s \rangle \geq -1\text{ for all  }s \in S\text{ and }1\leq i\leq k\right\}.
  \] 
  We write $\Sigma_S$ for the normal fan of $Q_S$. 
\end{dfn}

\begin{rem}
	We assume throughout this section that, given an element $(D,\chi) \in S$, the strut $P_D + \chi$ contains a vertex of $P$. In fact $X_P$ embeds in $Y_S$ if and only if it embeds into $Y_{S'}$ where $S'$ is the scaffolding obtained by removing all struts which do not meet a vertex of $P$.
\end{rem}

\begin{rem}
It is elementary to check that the toric variety defined by $\Sigma_S$ is precisely $Y_S$. Indeed, the polarising class is precisely the one chosen in Algorithm~\ref{alg:laurent_inversion}.
\end{rem}

We now study the faces of $Q_S$ in more detail. Our first step is to introduce a polyhedral decomposition of $Q := P^*$. 

\begin{dfn}
	\label{dfn:strut_vertex}
	Let $\V{S}$ be the set of torus fixed points of $Z$, and observe there is a canonical bijection $\V{S} \rightarrow \V{P_{D'}}$ for an ample divisor $D'$,
	and a canonical surjection $\V{S} \rightarrow \V{P_D}$ for a nef divisor $D$ which we denote by $u \mapsto u^D$. We refer to $\V{S}$ as the set of vertices of the scaffolding $S$. Each element $u \in \V{S}$ defines a function $S \to N$, which we also denote by $u$, defined by setting $u\left((D,\chi)\right) = u^D + \chi$.
\end{dfn}

\begin{dfn}
	\label{dfn:U_vertices}
	Given $u \in \V{S}$ we define 
	\[
	\V{P,u} := \{v \in \V{P} \mid v = u(s)\text{ for some }s \in S \}.
	\]
\end{dfn}

\begin{dfn}
	\label{dfn:polyhedral_decomposition}
	Define a polyhedral decomposition of $Q$ by intersecting $Q$ with the fan $\Sigma_Z \times (N_U\otimes \QQ)$ defining the toric variety $Z\times T_{N_U}$. Maximal cells $C_u$ of this decomposition are indexed by elements $u \in \V{S}$.
\end{dfn}
\begin{rem}
	\label{rem:min_function}
	If we identify $\V{S}$ with the vertices of the polyhedron of sections of an ample divisor $D$ on $Z$ the chambers $C_u$ are precisely the maximal domains of linearity of the convex piecewise linear function 
	\[
	\min_{u \in \V{S}}\langle u^D, - \rangle \colon Q \rightarrow \QQ
	\]
	indexed by the vertex $u^D$ on which this function attains its minimum. If instead $D$ is nef then the analogous maximal domains of linearity are unions of chambers $C_u$.
\end{rem}
We next identify certain faces of $Q_S$ with images $\iota(C_u)$ of a piecewise linear function $\iota$.
\begin{dfn}
	\label{dfn:iota}
	Let $n = \dim M$. Define $\iota$ to be the inverse map to the restriction to $E \oplus N_U$ of the canonical projection $\tM_\QQ \rightarrow M_\QQ$,  where $E$ is the union of $n$-dimensional faces of the standard coordinate cone in $\Div_{T_{\bar{M}}}(Z)^\vee$ which project onto maximal dimensional cones of the fan of $Z$. 
\end{dfn}
The fact that $Z$ is $\QQ$-factorial ensures that $\iota$ is well defined. Note that $\iota$ is linear on each chamber $C_u \subset Q_\QQ$.
\begin{pro}
	\label{pro:iota_union}
	For each $u \in \V{S}$, the polytope $\iota(C_u)$ is a face of $Q_S$.
\end{pro}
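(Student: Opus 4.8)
\section*{Proof proposal}

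The plan is to realise $\iota(C_u)$ as the face of $Q_S$ cut out by exactly those defining inequalities $\langle -,e_i\rangle \geq 0$ of Definition~\ref{dfn:ambient} indexed by the rays $\rho_i$ of $Z$ \emph{not} lying in the maximal cone $\sigma_u \in \Sigma_Z$ corresponding to the torus fixed point $u$. Writing $\tM = \Div_{T_{\bar{M}}}(Z)^\vee \oplus M_U$ and $x = (w,v)$ for a point of $\tM_\QQ$, note that $\langle x,e_i\rangle$ is simply the $i$th coordinate $w_i$ of $w$. Hence
\[
F_u := \{x \in Q_S \mid \langle x,e_i\rangle = 0 \text{ whenever } \rho_i \notin \sigma_u\}
\]
is the vanishing locus on $Q_S$ of the nonnegative functional $\sum_{\rho_i \notin \sigma_u}\langle -,e_i\rangle$, and is therefore automatically a face of $Q_S$. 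It remains only to prove the set-theoretic equality $F_u = \iota(C_u)$.

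The heart of the argument is a single dictionary between the inequalities defining $Q_S$ and those defining $Q = P^\ast$. Recall that the canonical projection $\pi \colon \tM_\QQ \to M_\QQ$ of Definition~\ref{dfn:iota} sends the dual basis vector $e_i^\ast$ to the ray generator $\rho_i$ of $Z$, and that $\iota$ is its local linear inverse over the cone $\sigma_u$ (which is simplicial since $Z$ is $\QQ$-factorial); concretely $\iota(\bar{m},v) = (w,v)$, where $w$ is supported on $\sigma_u$ with $\bar{m} = \sum_{\rho_i \in \sigma_u} w_i \rho_i$. For a strut $s = (D,\chi)$, so that $\rho_s = (-D,\chi)$, I would compute, using that $\phi_D$ is linear on $\sigma_u$ and that for the nef divisor $D$ the minimising vertex $u^D$ of $P_D$ dual to $\sigma_u$ is precisely the one recorded by the surjection $\V{S} \to \V{P_D}$,
\[
\langle \iota(\bar{m},v),\rho_s\rangle = -\langle w,D\rangle + \langle v,\chi\rangle = -\phi_D(\bar{m}) + \langle v,\chi\rangle = \min_{y \in P_D}\langle(\bar{m},v),\, y+\chi\rangle.
\]
Ranging over all struts, the conditions $\langle \iota(\bar{m},v),\rho_s\rangle \geq -1$ are thus equivalent to $\langle(\bar{m},v),n\rangle \geq -1$ for every $n \in P_D+\chi$ and every $s$, that is, to $(\bar{m},v) \in P^\ast = Q$.

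With this identity both inclusions are immediate. For $\iota(C_u) \subseteq F_u$: a point $(\bar{m},v) \in C_u$ has $\bar{m}\in\sigma_u$, so its lift $(w,v)=\iota(\bar{m},v)$ has $w_i = 0$ for $\rho_i \notin \sigma_u$ and $w_i \geq 0$ otherwise, while the displayed identity shows the remaining inequalities $\langle -,\rho_s\rangle \geq -1$ hold because $(\bar{m},v)\in Q$; hence $(w,v)\in F_u$. For $F_u \subseteq \iota(C_u)$: any $(w,v)\in F_u$ has $w$ supported on $\sigma_u$ with nonnegative coordinates, so $(w,v)=\iota(\bar{m},v)$ for $\bar{m}:=\pi(w)\in\sigma_u$, and the identity read backwards gives $(\bar{m},v)\in Q$; together with $\bar{m}\in\sigma_u$ this says $(\bar{m},v)\in C_u$. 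Thus $F_u = \iota(C_u)$, which is a face of $Q_S$.

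I expect the main obstacle to be the bookkeeping in the displayed identity: pinning down the sign conventions relating $\phi_D$, the vertex $u^D$, and the pairing $\langle w,D\rangle$, and especially justifying that for $\bar{m}\in\sigma_u$ the minimum of $\langle\bar{m},-\rangle$ over $P_D$ is attained at $u^D$. This last point is exactly where the nefness of $D$ is used, via the fact that the normal fan of $P_D$ coarsens $\Sigma_Z$ so that $\sigma_u$ lies in the normal cone of the single vertex $u^D$; and $\QQ$-factoriality of $Z$ is what makes $\iota$, and hence the coordinates $w_i$ of the lift, well defined in the first place.
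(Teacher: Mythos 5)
Your proposal is correct and takes essentially the same approach as the paper: your displayed identity $\langle \iota(\bar m,v),\rho_s\rangle = \min_{y\in P_D}\langle (\bar m,v),\,y+\chi\rangle$ is exactly the paper's key computation that $\iota_u^\star\rho_s$ equals the strut vertex $u(s)=u^D+\chi$, and both inclusions are then derived from it just as in the paper's proof of Proposition~\ref{pro:iota_union}. Your write-up is if anything slightly more explicit than the paper's -- in identifying the face concretely as the zero locus of the nonnegative functional $\sum_{\rho_i\notin\sigma_u}\langle -,e_i\rangle$, and in the reverse inclusion, where you spell out that the strut inequalities alone force $(\bar m,v)\in P^*$ because $P$ is the convex hull of the struts -- but these are refinements of the same argument, not a different route.
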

\begin{proof}
	We first show that, for any $p \in C_u$:
	\begin{enumerate}
		\item $\langle e_i, \iota(p)\rangle = 0$ for some $1\leq i\leq k$;
		\item $\langle e_j, \iota(p)\rangle \geq 0$ for all $1\leq j\leq k$; and
		\item $\langle \rho_s, \iota(p)\rangle \geq -1$ for all $s \in S$.
	\end{enumerate}
	Fixing a point $p \in C_u$ the first two are obvious: $\iota(p)$ lies in the positive co-ordinate cone of $\tM_\QQ$ so the second condition is automatic, the first follows from the fact that $\iota(p)$ lies in the cone spanned by $n=\dim{M}$ of the standard coordinate vectors and hence in the hyperplane defined by $\langle e^\star_i,- \rangle$ for $e_i$ not among these $n$ vectors.
	
	Next we consider $\langle \rho_s, \iota(p)\rangle$; the map $\iota$ is linear on $C_u$ and let $\iota_u$ denote the linear extension of $\iota|_{C_u}$ to $M_\QQ$. We now compute $\langle \iota_u^\star\rho_s, p\rangle$. It is clear that $\iota_u(M_\QQ)$ is the span of $M_U$ together with co-ordinate vectors $e^\star_i$ in $\tM$ such that the divisor in $Z$ corresponding to $e_i$ meets the vertex $u$. By definition $\langle \rho_s, e^\star_i\rangle$ is the height of the supporting hyperplane of the polyhedron of sections $P_D$ where $s = (D,\chi)$. Thus $\iota_u^\star\rho_s$ is the vertex $u$ of $P_D+\chi$ and therefore lies in $P$. Since $\iota_u^\star\rho_s \in P$ and $p \in Q$, $\langle \iota_u^\star\rho_s, p\rangle \geq -1$.
	
	We have shown that $\iota(C_u)$ is contained in a face of $Q_S$, to show the reverse inclusion we need only to check that if $\langle \rho_s,m' \rangle \geq -1$ for $m' \in \iota_u(M)$ and $m'$ in the standard positive cone then $m' \in C_u$. However this also follows from the fact that $\iota_u^\star\rho_s$ is the vertex $u$ of $P_D+\chi$.
\end{proof}

The polytope $Q_S$ determines its normal fan $\Sigma_S$, which in turn determines a toric variety $Y_S$. We now prove that the pullback of the fan $\Sigma_S$ under the inclusion $M \rightarrow \tM$ is the spanning fan of the Fano polytope $P$.

\begin{lem}
	\label{lem:all_rays}
	The set of rays of $\Sigma_S$ is $\{\rho_s \mid s \in S\}\cup\{e_i \mid 1\leq i\leq k\}$. That is, all the rays used in Definition~\ref{dfn:ambient} to define $Q_S$ appear in the normal fan of $Q_S$.
\end{lem}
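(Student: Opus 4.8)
The statement asserts that every ray used in Definition~\ref{dfn:ambient} to cut out $Q_S$ is actually a ray of the normal fan $\Sigma_S$, equivalently that every defining inequality is supporting at a facet of $Q_S$ (none is redundant). My plan is to exhibit, for each generator, an explicit facet of $Q_S$ on which the corresponding inequality is tight and which has the correct codimension. The principal tool is Proposition~\ref{pro:iota_union}, which already produces a rich supply of facets $\iota(C_u)$ indexed by $u \in \V{S}$, together with the polyhedral decomposition of $Q = P^*$ from Definition~\ref{dfn:polyhedral_decomposition}.

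\textbf{The two families of rays.}
First I would treat the rays $\rho_s$ for $s \in S$. Fix a strut $s = (D,\chi)$; by the standing assumption in this section the strut $P_D + \chi$ meets a vertex of $P$, so there is some $u \in \V{S}$ with $\iota_u^\star\rho_s$ equal to a vertex of $P$ lying on this strut. In the proof of Proposition~\ref{pro:iota_union} it is shown that $\langle \iota_u^\star\rho_s, p\rangle \geq -1$ on $C_u$ with equality exactly when $p$ lies on the supporting hyperplane of $P_D + \chi$ at $u$. Pushing this forward under $\iota$, the inequality $\langle \rho_s, -\rangle \geq -1$ is tight along the face $\iota(C_u \cap \{\,\langle \iota_u^\star\rho_s,-\rangle = -1\,\})$, and since $\iota_u^\star\rho_s$ is a genuine vertex of $P$ this locus is a facet of $\iota(C_u)$, hence of $Q_S$. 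This shows $\rho_s$ appears in $\Sigma_S$. For the rays $e_i$, $1 \leq i \leq k$, I would argue directly from Definition~\ref{dfn:ambient}: the constraint $\langle u, e_i\rangle \geq 0$ defines the facet where the $i$th coordinate (in $\Div_{T_{\bar M}}(Z)^\vee$) vanishes. Because the cone $E$ in Definition~\ref{dfn:iota} is the union of the top-dimensional coordinate faces projecting onto maximal cones of $\Sigma_Z$, and $Z$ is $\QQ$-factorial so every maximal cone is simplicial, there exist chambers $C_u$ whose image $\iota(C_u)$ lies in the facet $\{\langle -, e_i\rangle = 0\}$; concretely, choose $u$ so that the toric divisor corresponding to $e_i$ does \emph{not} meet $u$, which is possible since no single cone of $\Sigma_Z$ contains every ray. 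This exhibits $e_i$ as a ray of $\Sigma_S$.

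\textbf{The main obstacle.}
The delicate point is \emph{non-redundancy}: showing that each such locus is genuinely a facet (codimension one) rather than a lower-dimensional face, and in particular that the face on which a given inequality is tight is not forced to coincide with one where several other inequalities become tight simultaneously. The $\QQ$-factoriality of $Z$ is doing the essential work here, since it guarantees that the relevant coordinate cones $\iota_u(M_\QQ)$ are full-dimensional and meet the defining hyperplanes transversally; without it the map $\iota$ would fail to be well defined and the faces $\iota(C_u)$ could degenerate. I would therefore take care, when producing the tight facet for each generator, to verify that the supporting hyperplane in question contains an $(n)$-dimensional face of $Q_S$, using that the vertices $u \in \V{S}$ correspond bijectively to maximal cells $C_u$ covering all of $Q$, so that the faces $\iota(C_u)$ together with the coordinate facets account for the full boundary of $Q_S$. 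Once every defining inequality is matched to such a facet, no ray is redundant and the set of rays of $\Sigma_S$ is exactly $\{\rho_s \mid s \in S\} \cup \{e_i \mid 1 \leq i \leq k\}$, as claimed.
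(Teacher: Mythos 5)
There is a genuine gap, and it sits exactly at the step you flag as ``the main obstacle''. What your argument establishes, for both families of generators, is that each defining inequality is \emph{tight} on some nonempty face of $Q_S$; what the lemma requires is that each inequality is tight on a face of \emph{codimension one}. These are not the same thing: a valid inequality can be tight on a low-dimensional face without its normal being a ray of the normal fan (for the unit square, $x+y\geq 0$ is tight exactly at the vertex $(0,0)$, but $(1,1)$ is not a ray of the normal fan -- it lies in the interior of the two-dimensional normal cone at that vertex). A dimension count makes the problem unavoidable here: $Q_S$ is full-dimensional in $\tM_\QQ$, so its facets have dimension $k+\dim N_U-1$, where $k$ is the number of rays of $Z$, whereas $\iota(C_u)$ has dimension only $n=\dim\bar{M}+\dim N_U$. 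Hence the locus $\iota\big(C_u\cap\{\langle u(s),-\rangle=-1\}\big)$ you exhibit for $\rho_s$ has dimension $n-1$, and such a set can be a facet of $Q_S$ only when $k=\dim\bar{M}+1$, i.e.\ only when $Z$ is a (fake) weighted projective space. So ``this locus is a facet of $\iota(C_u)$, hence of $Q_S$'' is a non sequitur; already for $Z=\PP^1\times\PP^1$ your locus is a segment while the facets of $Q_S\subset\QQ^4$ are three-dimensional. The same objection applies to your treatment of the $e_i$: exhibiting chambers with $\iota(C_u)\subset\{\langle e_i,-\rangle = 0\}$ shows only that $e_i$ lies in \emph{some} normal cone of $\Sigma_S$, possibly of dimension at least two, not that it spans a ray.

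Your proposed repair does not close this gap, because it rests on a false claim: the faces $\iota(C_u)$ together with the coordinate facets cannot ``account for the full boundary of $Q_S$'' -- the $\iota(C_u)$ have dimension $n<\dim\tM_\QQ-1$ in general, and if the boundary really were covered by them and the coordinate facets, then the inequalities $\langle\rho_s,-\rangle\geq -1$ would all be redundant, the opposite of what you are trying to prove. What is needed is, for each generator, a point of $Q_S$ at which \emph{that constraint alone} is active; near such a point $Q_S$ is a single half-space, and the constraint visibly cuts out a facet. This is how the paper argues. For the $e_i$ it works at the origin: every strut inequality is strict there ($\langle\rho_s,0\rangle = 0 > -1$), so inside a small ball $Q_S$ coincides with the full-dimensional orthant $\{\langle e_i,-\rangle\geq 0,\ 1\leq i\leq k\}$, whose facet normals are exactly the $e_i$. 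For $\rho_s$ it takes a vertex $v$ of $P$ on the strut, a point $p$ in the relative interior of the dual facet $v^\star\subset Q$, and perturbs $\iota(p)$ inside the hyperplane $\{\langle\rho_s,-\rangle=-1\}$ into the open orthant $\{\langle e_i,-\rangle>0\}$ and off the finitely many hyperplanes $\{\langle\rho_{s'},-\rangle=-1\}$ of the other struts through $v$; all remaining constraints are strict near $\iota(p)$ because, on $\iota(\partial Q)$, each $\langle\rho_{s'},-\rangle$ attains $-1$ only along images of facets dual to vertices of the form $u(s')$. The resulting point lies in the relative interior of a codimension-one face with normal $\rho_s$, which is what certifies that $\rho_s$ is a ray of $\Sigma_S$.
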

\begin{proof}
	Finding facets of $Q_S$ with normal direction $e_i$, $1\leq i\leq k$, is straightforward: intersecting $Q_S$ with a small ball $B$, so that $\langle \rho_s,p\rangle > -1$ for all $p \in B$, centered at the origin we obtain a smooth (not necessarily strictly convex) cone. The normal directions to the facets meeting the origin are precisely the co-ordinate vectors $e_i$.
	
	Now fix an element $s = (D,\chi) \in S$ and a vertex $v \in P$ which meets the strut $P_D+\chi$. Let $B'$ be a small ball around a point $\iota(p)$, where $p$ is a point in the interior of the facet $v^\star$ dual to the vertex $v$. Now recall from the proof of Proposition~\ref{pro:iota_union} that for any $s' \in S$ and $u \in \V{S}$
	\[
	\iota_u^\star\rho_{s'} = u(s')
	\]
	where $\iota_u^\star$ is the dual map to the linear map defined by restricting $\iota$ to $C_u$. Consequently  considering $\rho_{s'}$ as a function on $\iota(\partial Q)$ we see that $\rho_{s'}$ achieves its minimum, $-1$, precisely along facets $u(s)^\star$ for $u(s)$ a vertex of $P$. Therefore, taking a point $p'$ in the intersection of $B'$ with the hyperplane $\langle \rho_s,-\rangle = -1$ and the half spaces $\langle e_i,-\rangle > 0$ for $1\leq i\leq k$, by construction $p'$ lies on the facet with normal $\rho_s$.
\end{proof}

We require an explicit description of those cones in $\Sigma_S$ which intersect the image of $N_\QQ$ non-trivially. Fixing a face $E$ of $\partial P$ we first identify a cone in $\tN_\QQ$ which intersects $N_\QQ$ precisely in the cone generated by $E$.
\begin{dfn}
	Given a stratum $E \in \partial P$ let
	\[
	\V{S,E} := \{u \in \V{S} \mid u(s) \in E\text{ for some }s \in S\}.
	\]
	\noindent Let $C_E \subset \tN_\QQ$ be the cone generated by the following rays:
	\begin{enumerate}
		\item $\rho_s$, for $s=(D,\chi) \in S$ such that the strut $P_D+\chi$ meets $E$;
		\item $e_i$, the divisors of $Z$ which miss some $u \in \V{S,E}$. 
	\end{enumerate}
\end{dfn}

\begin{pro}
	\label{pro:good_cones}
	We have that $C_E \cap N_\QQ = \cone(E)$.
\end{pro}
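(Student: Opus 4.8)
The plan is to prove the two inclusions $\cone(E) \subseteq C_E \cap \NQ$ and $C_E \cap \NQ \subseteq \cone(E)$ separately. Throughout I write $\iota_N \colon N \hookrightarrow \tN$ for the lattice embedding following Theorem~\ref{thm:embedding}, so that its dual $\pi \colon \tM \to M$ is the projection onto the factors $\bar{M}$ and $M_U$ (sending $e^\star_i \mapsto \rho_i$ on the first factor and acting as the identity on $M_U$), and I record the key fact from the proof of Proposition~\ref{pro:iota_union}: on each chamber $C_u$ the map $\iota$ is the restriction of a linear section $\iota_u$ of $\pi$, whose transpose satisfies $\iota_u^\star \rho_s = u(s)$ for every $s \in S$. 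For brevity write $S_E := \{s = (D,\chi) \in S \mid P_D+\chi \text{ meets } E\}$ and $I_E := \{i \mid \Delta_i \text{ misses some } u \in \V{S,E}\}$, so that $C_E = \cone(\{\rho_s \mid s \in S_E\} \cup \{e_i \mid i \in I_E\})$.

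For the forward inclusion it suffices to show that each vertex $v$ of $E$ lies in $C_E \cap \NQ$, since $\cone(E)$ is generated by such $v$. As $P = \conv{P_D+\chi \mid (D,\chi)\in S}$, the vertex $v$ is a vertex of some strut $s=(D,\chi)$; this strut meets $E$, so $s \in S_E$, and $v = u(s) = u^D+\chi$ for the corresponding $u \in \V{S,E}$. A direct computation from the definitions of $\iota_N$ and $\rho_s=(-D,\chi)$ gives
\[
\iota_N(v) - \rho_s = \sum_i \big(\langle u^D,\rho_i\rangle + \phi_D(\rho_i)\big)\, e_i,
\]
and since $u^D \in P_D$ every coefficient is non-negative, vanishing exactly when $\Delta_i \ni u$. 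The surviving terms have $\Delta_i \not\ni u$ with $u \in \V{S,E}$, hence $i \in I_E$; thus $\iota_N(v) = \rho_s + \sum_{i\in I_E} c_i e_i$ with $c_i \geq 0$ lies in $C_E$, as required.

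For the reverse inclusion I pass to polar cones: since both sides are closed convex cones, $C_E \cap \NQ \subseteq \cone(E)$ is equivalent to $\cone(E)^\vee \subseteq \overline{\pi(C_E^\vee)}$, where $\cone(E)^\vee = \{m \in \MQ \mid \langle m,v\rangle \geq 0 \text{ for all } v\in E\}$ and $C_E^\vee = \{\tilde m \mid \langle \rho_s,\tilde m\rangle \geq 0,\ \langle e_i,\tilde m\rangle\geq 0\}$. Given $m \in \cone(E)^\vee$, the strategy is to exhibit $u \in \V{S,E}$ for which the lift $\tilde m = \iota_u(m)$ lies in $C_E^\vee$; then $\pi(\tilde m) = m$ and we are done. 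Under $\iota_u$ the defining inequalities of $C_E^\vee$ read $\langle m, u(s)\rangle \geq 0$ for $s \in S_E$ and $\langle e_i,\iota_u(m)\rangle \geq 0$ for $i \in I_E$. The first holds whenever $u(s)\in E$, because then $\langle m,u(s)\rangle\geq 0$ by the choice of $m$; the second is controlled by the position of $\bar m$ relative to the maximal cone $\sigma_u$ of the fan of $Z$, since $\langle e_i,\iota_u(m)\rangle$ is the $i$-th coordinate of $\bar m$ expressed in the rays of $\sigma_u$ (and vanishes for $\Delta_i \not\ni u$). The point is that, as $u$ ranges over $\V{S,E}$, the regions $K_u := \{m \mid \iota_u(m)\in C_E^\vee\}$ cover $\cone(E)^\vee$, which is governed by the minimal-value description of the chambers $C_u$ in Remark~\ref{rem:min_function}.

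The main obstacle is precisely this covering step when $\dim E > 0$, so that $\V{S,E}$ contains several torus-fixed points: no single $u$ works for all $m \in \cone(E)^\vee$ simultaneously (indeed choosing $u$ from the chamber containing $\bar m$ typically fails), and one must instead select $u \in \V{S,E}$ adapted to $m$ so that the relevant strut-vertices $u(s)$ land in $E$ while the $\sigma_u$-coordinates indexed by $I_E$ stay non-negative. Verifying that these choices patch together to cover all of $\cone(E)^\vee$ — equivalently, that the face of $Q_S$ cut out by $\langle \rho_s,-\rangle=-1$ ($s\in S_E$) and $\langle e_i,-\rangle = 0$ ($i \in I_E$) projects under $\pi$ onto the dual face of $E$ in $Q=P^\star$ — is the crux of the argument, and is where the hypotheses that $Z$ is complete and $\QQ$-factorial, together with Proposition~\ref{pro:iota_union}, enter.
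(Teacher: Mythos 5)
Your first half is correct, and it is essentially the paper's own argument for the inclusion $\cone(E) \subseteq C_E \cap \NQ$: the paper expresses the non-negativity of the coefficients in $\theta(v) - \rho_s = \sum_i c_i e_i$ by saying that the divisor $\theta(v)-\rho_s$ is effective on $Z$, which is exactly your computation $c_i = \langle u^D,\rho_i\rangle + \phi_D(\rho_i) \geq 0$, with the surviving $e_i$ indexed by divisors missing $u$ and hence lying in your index set $I_E$.

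The reverse inclusion, however, has a genuine gap, and you flag it yourself. The dualisation step is legitimate (all cones here are polyhedral, so the image $\pi(C_E^\vee)$ is closed and the polar equivalence holds), but it reduces the statement to the claim that every $m \in \cone(E)^\vee$ admits a lift $\iota_u(m) \in C_E^\vee$ for some $u \in \V{S,E}$ --- and this claim is never proved: you call it ``the crux of the argument'' and ``the main obstacle''. This covering statement is not a routine check. As you observe, for a fixed $u$ either family of inequalities ($\langle m, u(s)\rangle \geq 0$ for all $s \in S_E$, or non-negativity of the $\sigma_u$-coordinates of $\bar m$ indexed by $I_E$) can fail, and producing a $u$ adapted to a given $m$ requires precisely the interplay between the struts and the face $E$ that the proposition encodes; so as written you have reformulated Proposition~\ref{pro:good_cones} in dual language rather than proved it. Note that the paper's proof avoids any such covering argument: it first observes that the reverse inclusion need only be checked when $E$ is a vertex $v$, and then uses a dimension count --- the generators of $C_E$ other than the $\rho_s$ lie in $\ker(\iota_u^\star)$, which is a linear complement of $\theta(\NQ)$ in $\tN_\QQ$, while each $\rho_s$ lies in $\theta(v) + \ker(\iota_u^\star)$ because $\iota_u^\star\rho_s = u(s) = v$; hence $C_E$ is contained in the subspace $\QQ\,\theta(v) + \ker(\iota_u^\star)$, so $\dim\left(C_E \cap \NQ\right) \leq 1$, and the forward inclusion then forces $C_E \cap \NQ = \cone(v)$. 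To complete your route you would have to actually establish the covering of $\cone(E)^\vee$ by the regions $K_u$ (for instance by showing they are the maximal cells of a polyhedral decomposition of $\cone(E)^\vee$); alternatively, replace the second half by the linear-algebra count above.
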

\begin{proof}
	First we show that $\cone(E) \subset C_E \cap N_\QQ$. In fact we prove that every ray generator of $\cone(E)$ appears in $ C_E \cap N_\QQ$. By definition the ray generators of $\cone(E)$ are precisely the vertices of the face $E \subset P$. Choose such a vertex $v$. Writing $\theta$ for the inclusion $N \rightarrow \tN$, since $v = \iota_u^\star(\rho_s)$ for any $u$ such that $v \in \V{P,u}$ we see that $(\theta(v) - \rho_s) \in \ker(\iota_u^\star)$. However $\ker(\iota_u^\star)$ is the annihilator of $\iota_u(M_\QQ) \subset \tM$, which is generated by those divisors $e_i$ which miss the torus fixed point $u \in \V{S}$ which determined the linear map $\iota_u$. That is, we can express $v$ as a linear combination of ray generators of $\cone(E)$. Thus to prove that $v \in C_E \cap N_\QQ$ we now only need to prove that the divisor $\theta(v) - \rho_s$ is effective on $Z$. However the polyhedron of sections of this divisor is easily seen to be that of $-\rho_s$ translated so that the origin is identified with the $u$ vertex of $P_D$ (for $s = (D,\chi)$). Since this polyhedron contains the origin it must be the polyhedron of sections of an effective divisor. 
	
	To prove the reverse inclusion observe that it suffices to consider the case where $E$ is a vertex. Since the generators of $C_E$ different from $\rho_s$ generate the subspace $\ker(\iota_u^\star) \subset \tN$ they span a complement to $\theta(N)$, and therefore $\dim\left(C_E \cap N_\QQ\right) \leq 1$. Thus this intersection is contained in a ray which, by the first part, must be $\cone(E)$.
\end{proof}

We now conclude the proof of Theorem~\ref{thm:embedding} by showing that the cones $C_E$ appear in the fan~$\Sigma_S$. We show that the following diagram commutes:
\begin{equation}
  \begin{aligned}
\label{eq:diagram}
    \xymatrix{
      \widetilde{\Sigma}^{max}_S \ar[rr] \ar^{-\cap N_{\QQ}}[d] & & \V{Q_S} \\
      \Sigma^{max}_{P} \ar[rr] & & \V{Q} \ar^{\iota}[u].
    }
  \end{aligned}
\end{equation}
Here the horizontal arrows are the usual bijections between $k$-strata of a polytope and codimension-$k$ cones of its normal fan, and $\Sigma_P$ is the spanning fan of the polytope $P$. The right-hand vertical arrow is the piecewise linear map $\iota$ and the left-hand vertical map is intersection with the subspace $N_\QQ$, where we slightly abuse notation to make this map well-defined by restricting to those maximal cones in $\tN_\QQ$ which meet $N_\QQ$ in a maximal dimensional cone. Choose a maximal cone in $\Sigma_S$ which meets $N_\QQ$ in maximal dimensional cone $\sigma$; let $v$ be the corresponding vertex of $Q$. We need to check that the normal directions to the facets which meet $\iota(v) \in \V{Q_S}$ generate $C_\sigma$.
	
First consider the collection of struts $\{\rho_s \mid s \in S, \langle \rho_s, \iota(v) \rangle = -1\}$. Recall from the proof of Proposition~\ref{pro:iota_union} that given an element $u \in \V{S}$ and $w \in \V{v^\star}$ such that $w \in \V{P,u}$ we have that $\iota^\star_u(\rho_s) = w$. Now $v \in C_u$ for some $u \in \V{S}$ and observe that $\iota(v)=\iota_u(v)$ for any such $u \in \V{S}$, and  	$\langle \iota^\star_u(\rho_s), v \rangle = \langle u(s), v \rangle$, so the condition that $\langle \rho_s, \iota(v) \rangle = -1$ is equivalent to the condition that
\[
\langle u(s), v \rangle = -1
\]
for some $u \in \V{S}$ such that $v \in C_u$. That is, that $u(s) \in v^\star$ for some $u$. In other words, the set of struts $\{\rho_s \mid s \in S, \langle \rho_s, \iota(v) \rangle = -1\}$ is precisely the set of those struts which meet $v^\star$. Thus the collection of hyperplanes $\langle \rho_s,-\rangle = -1$ meeting $\iota(v)$ are precisely the generators $\rho_s$ of $C_{v^\star}$. Furthermore, by Lemma~\ref{lem:all_rays} these hyperplanes do all appear as facets of $Q_S$ that meet $v$. It remains to check that the generators $e_i$ of $C_{v^\star}$ appear as rays in the normal cone to $\iota(v)$.
	
	Consider the collection $\{e_i \mid 1\leq i\leq k, \langle e_i,\iota(v) \rangle = 0 \}$, and observe that by Lemma~\ref{lem:all_rays} each of these hyperplanes defines a facet of $Q_S$ that meets $v$. Recall that the elements $u \in \V{S}$ are in bijection with the maximal cones $\sigma_u$ in the fan $\Sigma_Z$ defining $Z$, and that by definition $\iota(\sigma_u)$ is the cone in $\tM$ whose ray generators are standard basis vectors corresponding to rays of $\sigma_u$. Choosing any $u$ such that $v \in C_u$ the functionals $e_i$ which vanish on the cone $\iota(\sigma_u)$ are precisely those elementary torus invariant divisors which miss the torus fixed point $u \in \V{S}$. Conversely $\iota(\cone(v))$ is the intersection of cones $\iota(\cone(\sigma_u))$ for $v \in C_u$, and so the annihilator of $\iota(\cone(v))$ is generated by the union of those standard basis vectors generating the annihilator of $\iota(\sigma_u)$. That is, if $e_i$ vanishes on $\iota(v)$ it vanishes on some $\sigma_u$.
	
	Thus the facets which meet $\iota(v)$ exactly generate the cone $C_{\cone(v^\star)}$. Proposition~\ref{pro:good_cones} now implies that diagram~\eqref{eq:diagram} commutes. Consider now the fan $\Sigma'_P$ obtained by pulling back the fan $\Sigma_S$ along the inclusion $\theta \colon N_\QQ \hookrightarrow \tN_\QQ$. Commutativity of diagram~\eqref{eq:diagram} guarantees that the cone over each face of $P$ occurs in $\Sigma'_P$. Since the spanning fan of $P$ is a complete fan, we have determined every cone of $\Sigma'_P$. This completes the proof of Theorem~\ref{thm:embedding}.

\bibliographystyle{plain}
\bibliography{bibliography}
\end{document}